\numberwithin{equation}{section}
\newtheorem{theoremcounter}{theoremcounter}[section]
\theoremstyle{plain}
\newtheorem{corollary}[theoremcounter]{Corollary}
\newtheorem{lemma}[theoremcounter]{Lemma}
\newtheorem{proposition}[theoremcounter]{Proposition}
\newtheorem{theorem}[theoremcounter]{Theorem}
\theoremstyle{definition}
\newtheorem{definition}[theoremcounter]{Definition}
\theoremstyle{remark}
\newtheorem{remark}[theoremcounter]{Remark}
\theoremstyle{plain}
\newtheorem{introtheorem}{Theorem}
\newtheorem{introproblem}{Problem}
\newtheorem*{introconjecture}{Conjecture}
\newcommandx{\unsure}[2][1=]{\todo[linecolor=red,backgroundcolor=red!25,bordercolor=red,#1]{#2}}
\newcommandx{\change}[2][1=]{\todo[linecolor=blue,backgroundcolor=blue!25,bordercolor=blue,#1]{#2}}
\newcommandx{\info}[2][1=]{\todo[linecolor=OliveGreen,backgroundcolor=OliveGreen!25,bordercolor=OliveGreen,#1]{#2}}
\newcommandx{\improvement}[2][1=]{\todo[linecolor=Plum,backgroundcolor=Plum!25,bordercolor=Plum,#1]{#2}}
\tikzset{every picture/.style={x=1em, y=1em}}
\setlist[description]{font=\normalfont\itshape\space}
\newcommand{\cU}{\ensuremath{\mathcal{U}}}
\newcommand{\cZ}{\ensuremath{\mathcal{Z}}}
\newcommand{\rA}{\ensuremath{\mathrm{A}}}
\newcommand{\rB}{\ensuremath{\mathrm{B}}}
\newcommand{\rD}{\ensuremath{\mathrm{D}}}
\newcommand{\rE}{\ensuremath{\mathrm{E}}}
\newcommand{\rL}{\ensuremath{\mathrm{L}}}
\newcommand{\rM}{\ensuremath{\mathrm{M}}}
\newcommand{\rS}{\ensuremath{\mathrm{S}}}
\newcommand{\rU}{\ensuremath{\mathrm{U}}}
\newcommand{\rV}{\ensuremath{\mathrm{V}}}
\newcommand{\rmd}{\ensuremath{\mathrm{d}}}
\newcommand{\rmm}{\ensuremath{\mathrm{m}}}
\newcommand{\rmo}{\ensuremath{\mathrm{o}}}
\newcommand{\rmt}{\ensuremath{\mathrm{t}}}
\newcommand{\vphi}{\ensuremath{\varphi}}
\newcommand{\ol}{\overline}
\newcommand{\amid}{\ensuremath{\, | \,}}
\newcommand{\eqstop}{\ensuremath{\, \text{.}}}
\newcommand{\eqcomma}{\ensuremath{\, \text{,}}}
\newcommand{\NN}{\ensuremath{\mathbb{N}}}
\newcommand{\ZZ}{\ensuremath{\mathbb{Z}}}
\newcommand{\RR}{\ensuremath{\mathbb{R}}}
\newcommand{\CC}{\ensuremath{\mathbb{C}}}
\newcommand{\id}{\ensuremath{\mathrm{id}}}
\newcommand{\ra}{\ensuremath{\rightarrow}}
\newcommand{\hra}{\ensuremath{\hookrightarrow}}
\newcommand{\Tr}{\ensuremath{\mathrm{Tr}}}
\newcommand{\ev}{\ensuremath{\mathrm{ev}}}
\newcommand{\Aut}{\ensuremath{\mathrm{Aut}}}
\newcommand{\bs}{\ensuremath{\backslash}}
\newcommand{\ot}{\ensuremath{\otimes}}
\newcommand{\Cstar}{\ensuremath{\mathrm{C}^*}}
\newcommand{\bo}{\ensuremath{\mathcal{B}}}
\newcommand{\Cstarmax}{\ensuremath{\Cstar_\mathrm{max}}}
\newcommand{\vnt}{\ensuremath{\overline{\otimes}}}
\newcommand{\contc}{\ensuremath{\mathrm{C}_\mathrm{c}}}
\newcommand{\Linfty}{\ensuremath{{\offinterlineskip \mathrm{L} \hskip -0.3ex ^\infty}}}
\newcommand{\Ltwo}{\ensuremath{{\offinterlineskip \mathrm{L} \hskip -0.3ex ^2}}}
\newcommand{\Lone}{\ensuremath{{\offinterlineskip \mathrm{L} \hskip -0.3ex ^1}}}
\newcommand{\ltwo}{\ensuremath{\ell^2}}
\newcommand{\Ad}{\ensuremath{\mathop{\mathrm{Ad}}}}
\newcommand{\grpaction}[1]{\ensuremath{\stackrel{#1}{\curvearrowright}}}
\newcommand{\Res}{\ensuremath{\operatorname{Res}}}
\newcommand{\Fix}{\ensuremath{\mathrm{Fix}}}
\newcommand{\freegrp}[1]{\ensuremath{\mathbb{F}}_{#1}}
\newcommand{\Sym}{\ensuremath{\operatorname{Sym}}}
\newcommand{\leqc}{\ensuremath{\leq_{\mathrm{c}}}}
\newcommand{\leqo}{\ensuremath{\leq_{\mathrm{o}}}}
\newcommand{\authors}{Cyril Houdayer and Sven Raum}
\renewcommand{\title}{
  Locally compact groups acting on trees, the type ${\rm I}$ conjecture and non-amenable von Neumann algebras
}
\newcommand{\shorttitle}{Von Neumann algebras of groups acting on trees}
\begin{document}



\thispagestyle{empty}

\phantom{x}
\vspace{-5em}

\begin{center}
  \begin{minipage}[c]{0.9\linewidth}
      \begin{center}
        \LARGE
        \title
      \end{center}
      \centerline{\authors}
  \end{minipage}
\end{center}
  
\vspace{1em}

\renewcommand{\thefootnote}{}
\footnotetext{last modified on \today}
\footnotetext{Cyril Houdayer's research was supported by ERC Starting Grant GAN 637601.}
\footnotetext{Sven Raum's research leading to these results has received funding from the People Programme (Marie Curie Actions) of the European Union's Seventh Framework Programme (FP7/2007-2013) under REA grant agreement n°[622322].}
\footnotetext{
  \textit{MSC classification:}
  Primary 20E08;
  Secondary 22D10, 46L45
}
\footnotetext{
  \textit{Keywords:}
  Groups acting on trees, type ${\rm I}$ groups, free product von Neumann algebras
}

\begin{center}
  \begin{minipage}{0.8\linewidth}
    \textbf{Abstract}.
    We address the problem to characterise closed type ${\rm I}$ subgroups of the automorphism group of a tree.  Even in the well-studied case of Burger--Mozes' universal groups, non-type ${\rm I}$ criteria were unknown.  We prove that a huge class of groups acting properly on trees are not of type ${\rm I}$.  In the case of Burger-Mozes groups, this yields a complete classification of type ${\rm I}$ groups among them.  Our key novelty is the use of von Neumann algebraic techniques to prove the stronger statement that the group von Neumann algebra of the groups under consideration is non-amenable.
  \end{minipage}
\end{center}

\vspace{1em}


\section{Introduction}
\label{sec:introduction}

In discrete and topological group theory, groups acting on trees are important examples thanks to Bass-Serre theory \cite{serre80}.  In particular, the discovery of Bruhat-Tits theory \cite{serre80,bruhattits72} describing rank one reductive algebraic groups over non-Archimedean fields as groups acting on semi-regular trees provides strong motivation to study general closed subgroups of $\Aut(T)$, the automorphism group of a tree. In contrast to Bruhat-Tits buildings of higher rank \cite{weiss09}, semi-regular trees host a bigger variety of interesting groups, some of whose basic properties are not yet understood.  An intriguing problem asking us to prove surprising parallels between reductive algebraic groups and closed subgroups of $\Aut(T)$ is posed by the type ${\rm I}$ conjecture.
\begin{introconjecture}
  Let $T$ be a locally finite tree and assume that $G \leqc \Aut(T)$ is a closed subgroup acting transitively on the boundary $\partial T$.  Then $G$ is a type ${\rm I}$ group.
\end{introconjecture}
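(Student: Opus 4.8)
\medskip
\noindent\textit{Proof strategy.}
The plan is to show that the unitary dual $\wh G$ is a $T_{0}$ space --- equivalently, by Glimm's theorem, that $G$ is of type~${\rm I}$ --- and in fact to establish the stronger CCR (liminal) property, as is known for $\Aut(T)$ and for $\mathrm{PGL}_{2}$ over non-Archimedean local fields. A structural reduction comes first. We may assume $\partial T$ is infinite: the case $|\partial T| = 1$ is vacuous, and if $\partial T$ is finite with $|\partial T| \ne 1$ then $G$ has a compact open normal subgroup and type~${\rm I}$ is classical. Likewise, if $G$ is compact (for instance a vertex stabiliser) there is nothing to prove, so assume $G$ non-compact; then $G$ fixes neither a vertex, an edge, nor an end of $T$, hence contains a hyperbolic element, and --- being boundary transitive --- it acts transitively on the compact space $\partial T \cong G/P$, where $P := \Stab_{G}(\omega)$ for a fixed end $\omega$. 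Boundary transitivity together with local finiteness also pins down the shape of $T$: up to a finite modification and a subdivision, $T$ is a thick semiregular tree.

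Next one analyses the parabolic $P$ and organises the representation theory by $K$-types. Fixing a geodesic ray towards $\omega$ gives the Busemann homomorphism $\chi\colon P \to \ZZ$, recording translation length along $\omega$; its kernel is precisely the set of elliptic elements of $P$ and is an increasing union of compact subgroups, so $P$ is amenable and, as $G$ has a hyperbolic element with endpoint $\omega$, one has $P/\ker\chi \cong \ZZ$. Let $K := \Stab_{G}(v)$ be a maximal compact subgroup. The spherical representations --- those with a nonzero $K$-fixed vector --- are governed by the double coset algebra $\mathcal{H}(G,K) = C_{c}(K\backslash G/K)$, which by the combinatorics of a semiregular tree is commutative and finitely generated; hence they form a one-parameter family: the spherical principal series $\Ind_{P}^{G}\lambda$ with $\lambda$ a unitary character of $P$ trivial on $\ker\chi$, together with finitely many special representations at the reducibility points and the finite-dimensional ones --- a family that is manifestly $T_{0}$ and CCR. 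The non-spherical representations are reached by descending to smaller compact open subgroups $K' \le K$: one shows that $\wh G$ is exhausted by countably many ``levels'', each controlled by a Hecke algebra $\mathcal{H}(G,K')$, and that each such algebra carries only a smooth finite-dimensional family of irreducible representations, all CCR. Parabolic induction from $P$ and a Mackey-type argument identify most of these with non-spherical principal series, leaving a residual family of cuspidal representations to be realised geometrically, say on $\ltwo$ of a suitable space of bi-infinite geodesics in $T$.

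The main obstacle is precisely this cuspidal family. For $\Aut(T)$ or $\mathrm{PGL}_{2}$ the Hecke algebras $\mathcal{H}(G,K')$ are explicitly computable, but for an arbitrary boundary-transitive $G$ they encode the combinatorial ``wildness'' permitted in the action of vertex stabilisers on the stars of vertices, and it is not clear a priori that their representation theory is smooth. Concretely, I would try to prove a bound on $\dim\Hom_{K}(\sigma,\pi|_{K})$ that is \emph{uniform} over $\sigma \in \wh K$ and over all cuspidal $\pi \in \wh G$; such uniform finiteness would give at once the countable separation of the level/orbit space and the CCR property, and one would seek it using that $K$ acts transitively on $\partial T$ and highly transitively on the spheres about $v$. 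Making this bound genuinely independent of the local action is the crux, and it is here that boundary transitivity must be used in an essential way: no soft argument ignoring the transitivity hypothesis can succeed, since the non-type~${\rm I}$ results of the present paper produce closed subgroups of $\Aut(T)$ with large but \emph{not} boundary-transitive image whose group von Neumann algebra $\mathcal{L}(G)$ is non-amenable, hence whose unitary dual is not $T_{0}$.
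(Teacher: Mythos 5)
You have not proved the statement, and neither does the paper: this is the \emph{type ${\rm I}$ conjecture}, stated in the introduction as an open conjecture. The paper's own contribution is in the opposite direction (non-type-${\rm I}$ and non-amenability results for groups that are \emph{not} locally $2$-transitive), and the only positive cases it records are the known ones for vertex-transitive groups with Tits' independence property (Theorem 6.9, quoted from Olshanskii, Amann and Ciobotaru), where the independence property supplies exactly the combinatorial control that is missing in general. So there is no proof in the paper to compare yours against; the relevant question is whether your outline closes the conjecture, and it does not.

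The gap is the one you yourself flag, and it is not a technical loose end but the entire content of the conjecture. Your reductions (degenerate boundaries, semiregularity of $T$, amenability of $P=\Stab_G(\omega)$, commutativity of the spherical Hecke algebra $\contc(K\backslash G/K)$ and the resulting classification of spherical representations) are all standard and correct. But the step asserting that each deeper-level Hecke algebra $\mathcal{H}(G,K')$ ``carries only a smooth finite-dimensional family of irreducible representations, all CCR'' is precisely what one has to prove; equivalently, the uniform admissibility bound on $\dim\Hom_{K}(\sigma,\pi|_{K})$ over $\sigma\in\widehat{K}$ and cuspidal $\pi\in\widehat{G}$ is the admissibility conjecture that the paper's introduction identifies as the source of all known type ${\rm I}$ results for these groups. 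For a general boundary-transitive $G$ the algebras $\mathcal{H}(G,K')$ for small $K'$ are not commutative and not known to have tame representation theory, and boundary transitivity alone has never been shown to force such a bound --- all existing proofs use Tits' independence property or an algebraic-group structure to factorise $\Fix_G(e)$ and compute these algebras explicitly. Until that uniform finiteness is established (and you give no argument for it beyond indicating where one would look), the proposal is a research programme, not a proof.
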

Here, a locally compact group $G$ is called a type ${\rm I}$ group if every unitary representation of $G$ generates a type ${\rm I}$ von Neumann algebra.  This is one equivalent definition of type ${\rm I}$ groups provided by \cite[Theorem 2, page 592]{glimm61}.  Bernstein and Kirillov termed ``tame'' those algebraic groups and Lie groups that are type ${\rm I}$ -- in contrast to ``wild'' groups.  In this context, type ${\rm I}$ or tameness results are derived from a positive solution to the admissibility conjecture.  The notion of type ${\rm I}$ groups bears its relevance from representation theory.  Loosely speaking, type ${\rm I}$ groups are precisely those locally compact groups all of whose unitary representations can be written as a unique direct integral of irreducible representations, thus reducing the study of arbitrary unitary representations to considerations about irreducible unitary representations.  Prominent examples of type ${\rm I}$ groups are provided by reductive algebraic groups over non-Archimedean fields \cite{bernstein74-type-I, harish-chandra70} (see also the introduction of \cite{bernsteinzelevinskii76}), adelic reductive groups \cite{clozel07}, semisimple connected Lie groups \cite[Theorem 8.1]{knapp86} and nilpotent connected Lie groups \cite[Th{\'e}or{\`e}me~1]{dixmier59}.  However, only very few results confirming the type ${\rm I}$ conjecture beyond rank one algebraic groups are known, all of them being based on combinatorial considerations for the special class of groups satisfying Tits' independence property \cite{tits70}.  See \cite{olshanskii77}, \cite{olshanskii80}, \cite{amann-thesis} and \cite{ciobotaru15}.

From the theory of algebraic groups, natural examples of non type ${\rm I}$ groups, such as most adelic nilpotent groups are known  \cite{moore65}.  For groups acting on trees the situation looks worse, since tools from Lie theory and from algebraic groups are not available in the generality of groups acting on trees.  There is one small class of groups for which non-type ${\rm I}$ results are known and it lies at the far opposite end of boundary transitive groups.  Already in the 60's Thoma proved in \cite{thoma68} that virtually abelian groups are the only discrete groups of type ${\rm I}$, which completely clarifies type ${\rm I}$ questions for discrete groups acting on trees.  In the rich spectrum between discrete groups and boundary transitive groups acting on trees, however, up to now very little is known about representation theory.  This is despite the fact that this class contains very natural examples, such as Burger-Mozes groups associated with non 2-transitive permutation groups \cite{burgermozes00-local-global}.  Astonishingly, up to now there is no result available that provides examples of non-discrete non-type ${\rm I}$ groups acting on trees.  Recent attempts to approach this problem by classical methods \cite{ciobotaru16} did not yield the desired conclusion even for the best understood examples of Burger-Mozes groups.  In this article, we take a new point of view and employ operator algebraic methods, proving that a huge class of groups acting on trees is not of type~${\rm I}$.
\begin{introtheorem}
  \label{thm:intro:non-type-I}
  Let $T$ be a locally finite tree and $G \leqc \Aut(T)$ a closed non-amenable subgroup acting minimally on $T$.  If $G$ does not act locally 2-transitive, then $G$ is a not a type ${\rm I}$ group.
\end{introtheorem}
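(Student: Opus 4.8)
The plan is to prove the stronger statement announced in the abstract, that the group von Neumann algebra $\grpvn(G)$ is not injective; since the very definition of a type ${\rm I}$ group forces every representation --- the left regular representation in particular --- to generate a type ${\rm I}$, hence injective, von Neumann algebra, this suffices. The discrete case is disposed of separately: a discrete non-amenable group is not virtually abelian, so by Thoma's theorem it is not of type ${\rm I}$. From now on I may therefore assume $G$ is non-discrete, so that the vertex and edge stabilisers $G_v$, $G_e$ are non-trivial compact open subgroups and $\grpvn(G_v)$, $\grpvn(G_e)$ are finite-dimensional multi-matrix algebras.

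The second step is to extract an amalgam. Non-amenability prevents $G$ from fixing a vertex of $T$ or a point or a pair of points of $\partial T$, so $T$ is neither a vertex nor a line, and minimality forces $T$ to have no leaves. For a suitably chosen edge $e=\{v,w\}$, Bass--Serre theory applied to the $G$-tree $T$ (subdivided once to avoid inversions) produces an open subgroup $H\leqc G$ isomorphic to $G_v *_{G_e} G_w$ or to an HNN extension of $G_v$ over $G_e$, the stabilisers $G_v,G_w,G_e$ being compact open with $[G_v:G_e],[G_w:G_e]$ finite (bounded by vertex degrees). Since $H$ is open in $G$, the inclusion $\grpvn(H)\hra\grpvn(G)$ is the range of a normal conditional expectation, so injectivity of $\grpvn(G)$ would descend to $\grpvn(H)$; it is enough to show the latter is not injective. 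Finally $\grpvn(H)$ is the amalgamated free product $\grpvn(G_v)*_{\grpvn(G_e)}\grpvn(G_w)$ (or its HNN analogue) of the multi-matrix algebras $\grpvn(G_v),\grpvn(G_w)$ over the multi-matrix subalgebra $\grpvn(G_e)$, taken with respect to the canonical finite-index conditional expectations.

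The heart of the matter is that this amalgamated free product fails to be injective, and here the hypothesis that $G$ is not locally $2$-transitive does the work. Viewing $\grpvn(G_v)$ as a $\grpvn(G_e)$-bimodule, $\grpvn(G_v)=\grpvn(G_e)\oplus\bigoplus_j\mathcal{K}_j$, the reduced summands $\mathcal{K}_j$ being indexed by the non-trivial $G_e$-double cosets in $G_v$, equivalently by the non-trivial $G_e$-orbits on the $G_v$-orbit of $e$ inside $E(v)$, and likewise for $\grpvn(G_w)$. Local $2$-transitivity (with local transitivity) corresponds exactly to a single reduced summand on each side, which is the degenerate, $\grpvn(D_\infty)$-like regime where $\grpvn(H)$ is injective and $G$ can be type ${\rm I}$ --- realised e.g. by $\Aut(T)$ of a regular tree. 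When $G$ is not locally $2$-transitive one can, after the right choice of $e$ --- together with a case analysis for intransitive local actions and for the HNN case, which I suppress here and which may require passing to richer graph-of-groups decompositions --- arrange at least two non-trivial reduced bimodules on one side and at least one on the other. The alternating-tensor description of $L^2$ of the amalgam then displays the free, tree-like growth of reduced words; compressing $\grpvn(H)$ by a suitable projection of $\grpvn(G_e)$ (onto a non-trivial isotype, so as to expose the genuinely free part) exhibits a non-injective subalgebra, e.g.\ an interpolated free group factor $\grpvn(\freegrp{t})$ with $t>1$. Hence $\grpvn(H)$, and with it $\grpvn(G)$, is not injective, so $G$ is not of type ${\rm I}$.

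The main obstacle is exactly the non-injectivity input of the previous paragraph: a robust structure theorem to the effect that an amalgamated free product $\grpvn(G_v)*_{\grpvn(G_e)}\grpvn(G_w)$ over a finite-dimensional subalgebra, with finite-index expectations, is non-injective unless it is of the degenerate infinite-dihedral type, combined with the combinatorial verification that ``not locally $2$-transitive'' always escapes this degenerate class --- uniformly over the edge-transitive and non-edge-transitive cases, the amalgam and the HNN cases, and the possibility that $G$ is non-discrete while some vertices carry a trivial local action. A secondary technical burden is that $G$ need not be unimodular (closures of suitable Baumslag--Solitar groups act minimally and non-amenably on trees yet are non-unimodular), so one must work throughout with the Plancherel weight and with free products relative to state-preserving conditional expectations rather than with traces.
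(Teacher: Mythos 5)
Your overall strategy is exactly the paper's: deduce the theorem from non-amenability of $\rL(G)$ (the paper's Theorem C plus the observation that type ${\rm I}$ von Neumann algebras are amenable), obtain an amalgam $G_v *_{G_e} G_w$ over a compact open edge stabiliser via Bass--Serre theory, pass to the von Neumann algebraic amalgamated free product, compress by the averaging projection $p_{G_e}$, and invoke Dykema's theorem/interpolated free group factors once the Hecke algebra dimension $|G_e \bs G_v / G_e|$ is at least $3$ and $G_e \lneq G_w$. Your identification of the reduced $\rL(G_e)$-bimodule summands with non-trivial double cosets, and of local $2$-transitivity with the degenerate single-summand regime, matches the paper's Lemma 3.1 and Proposition 4.4; your remark on non-unimodularity is also on point (the paper reduces to the unimodular parts $G_0 *_K H_0$ rather than working with weights, and notes Ueda's results as an alternative).

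However, the case analysis you explicitly suppress is a genuine gap, not a routine verification --- it is where most of the paper's work lies (Sections 4 and 5). The equality $|G_e \bs G_v / G_e| = |G_e \bs \rE(v)|$ that converts ``not $2$-transitive at $v$'' into the needed double-coset bound requires $G_v$ to act \emph{transitively} on $\rE(v)$; if it does not, the witness edge $e$ may even satisfy $G_e = G_v$, in which case the single-edge amalgam collapses and carries no information. In that situation one must instead exploit the shape of the quotient graph $G^+ \bs T$, which is the point of the trichotomy on $\operatorname{rank} \pi_1(G^+ \bs T)$ and of the lemma on graphs of groups with three terminal edges with proper edge-group inclusions (the paper's Lemma 3.3 and Propositions 4.1--4.3); note that in the non-edge-transitive cases the hypothesis of failure of local $2$-transitivity is not even used directly, only non-amenability and thickness. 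Moreover, before any of this applies one must arrange that the quotient graph is finite and the tree is thick: the action need not be cocompact, so the paper passes to a compactly generated open subgroup chosen to still see the bad vertex, quotients by the compact kernel of the action on the resulting invariant subtree, and suppresses degree-$2$ vertices (Lemma 5.2, Lemma 5.1, Proposition 5.4), checking at each step that minimality and the failure of local $2$-transitivity survive. Without these reductions your ``suitably chosen edge'' need not exist, and the claim that one can always ``arrange at least two non-trivial reduced bimodules on one side and at least one on the other'' is precisely the statement that still needs proof.
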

An action of a group $G$ on a tree $T$ is called minimal, if $T$ is the smallest non-empty $G$-invariant subtree of $T$.  The action $G \grpaction{} T$ of a group on a tree is called locally 2-transitive, if for every vertex $v \in \rV(T)$ the action of the point stabiliser $G_v$ on adjacent geometric edges of $v$ is 2-transitive.  See Section~\ref{sec:groups-acting-on-trees} for more explanations.  The fact that we are able to prove a non-type ${\rm I}$ result in the generality of Theorem \ref{thm:intro:non-type-I}, insinuates the possibility of characterising those groups acting on trees that are of type~${\rm I}$.  In fact, a non-compact closed subgroup $G \leqc \Aut(T)$ is boundary transitive if and only if it is $n$-locally transitive for every $n$ in the sense of \cite{burgermozes00-local-global}: for every vertex $v$ of $T$ and every $n$ the stabiliser $G_v$ acts transitively on spheres of radius $n$ around $v$.  Since $G$ is locally 2-transitive if and only if it is 2-locally transitive, this notion provides a clear transition between groups acting not locally 2-transitively and boundary transitive groups.  We hence pose the following problem, going beyond the type ${\rm I}$ conjecture.

\begin{introproblem}
  \label{prob:type-I}
  Among closed subgroups of $\Aut(T)$, characterise those which are of type ${\rm I}$.
\end{introproblem}
\captionsetup[table]{name=Problem}
\begin{table}[h]
  \centering
  \begin{tabular}{lll}
    \toprule
    $G \leqc \Aut(T)$ is \dots & Statement & Expectation/Result \\
    \midrule
    boundary transitive & Type ${\rm I}$ conjecture & $G$ is type ${\rm I}$ \\
    \parbox[c][2.6em]{12em}{$(n-1)$-locally transitive,  \\ but not $n$-locally transitive} & open & $G$ is not type ${\rm I}$ \\
    not 2-locally transitive & Theorem \ref{thm:intro:non-type-I} & $G$ is not type ${\rm I}$ \\
    \bottomrule

  \end{tabular}
  \caption{non-amenable groups acting minimally on $T$}
\end{table}

The operator algebraic perspective introduced in this article reduces the problem to extend Theorem~\ref{thm:intro:non-type-I} to general non-boundary transitive groups to considerations in representation theory.

Burger--Mozes groups \cite{burgermozes00-local-global}, also known as universal groups acting on trees, form a particularly interesting class of examples of closed subgroups of $\Aut(T)$.  After choice of a permutation group $F \leq \rS_n$, Burger--Mozes construct groups $\rU(F)$ and index two subgroups $\rU(F)^+$ acting on the $n$-regular tree in such a way that their local action around vertices is prescribed by $F$.  These groups $\rU(F)^+$ attract particular interest of the totally disconnected group community, since they provide concrete examples of abstractly simple and compactly generated non-discrete groups \cite{burgermozes00-products, capracedemedts10, bankelderwillis14, smith15}.  Applying Theorem \ref{thm:intro:non-type-I} and combining it with known type ${\rm I}$ results \cite{amann-thesis, ciobotaru15}, we give a complete characterisation of type ${\rm I}$ groups in this important class of examples.
\begin{introtheorem}
  \label{thm:intro:characterisation-type-I-BM}
  Let $T$ be a locally finite tree and $G \leqc \Aut(T)$ a closed vertex transitive subgroup with Tits' independence property acting minimally on $\partial T$.  Then $G$ is a type ${\rm I}$ group if and only if $G$ is locally 2-transitive.

  In particular, if $F \leq \rS_n$ is a permutation group, then the Burger-Mozes groups $\rU(F)$ and $\rU(F)^+$ are type ${\rm I}$ groups if and only if $F$ is 2-transitive.
\end{introtheorem}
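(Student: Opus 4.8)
The plan is to combine the new non-type~${\rm I}$ result of Theorem~\ref{thm:intro:non-type-I} with the positive type~${\rm I}$ results already in the literature. First note that the hypotheses of Theorem~\ref{thm:intro:characterisation-type-I-BM} are in a regime where we may invoke Theorem~\ref{thm:intro:non-type-I}: a closed vertex-transitive subgroup $G \leqc \Aut(T)$ acting minimally on $\partial T$ (equivalently minimally on the regular tree $T$, once the trivial case of a single vertex or edge is excluded) is automatically non-amenable, since a non-compact closed subgroup acting minimally on $\partial T$ cannot be amenable; vertex transitivity rules out the remaining compact cases for a non-trivial tree. So Theorem~\ref{thm:intro:non-type-I} applies and gives: if $G$ does \emph{not} act locally $2$-transitively, then $G$ is not of type~${\rm I}$. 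Contrapositively, type~${\rm I}$ forces local $2$-transitivity, which is one direction of the equivalence.

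For the converse direction I would argue that, under Tits' independence property, local $2$-transitivity is \emph{sufficient} for type~${\rm I}$. This is precisely what is established in Amann's thesis \cite{amann-thesis} and in Ciobotaru's work \cite{ciobotaru15}: a closed boundary-transitive subgroup of $\Aut(T)$ with Tits' independence property is of type~${\rm I}$, and for groups with the independence property acting vertex-transitively, being locally $2$-transitive is equivalent to being boundary transitive. The key classical fact here, attributed in the introduction to Burger--Mozes, is that for a non-compact closed subgroup $G \leqc \Aut(T)$ with the independence property, $2$-local transitivity propagates to $n$-local transitivity for all $n$, hence to transitivity on the boundary $\partial T$. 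Thus, assuming $G$ is locally $2$-transitive, one deduces $G$ is boundary transitive and then applies the known type~${\rm I}$ theorem to conclude.

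The ``in particular'' statement for Burger--Mozes groups then follows by specialising: $\rU(F)$ (and its index-two subgroup $\rU(F)^+$) is a closed vertex-transitive subgroup of $\Aut(T_n)$ with Tits' independence property by construction, and it acts minimally on $\partial T_n$ whenever $F$ is transitive on $\{1,\dots,n\}$ (which we may assume, as it is needed for non-triviality). The local action of $\rU(F)$ around each vertex is exactly $F \leq \rS_n$, so $\rU(F)$ is locally $2$-transitive if and only if $F$ is a $2$-transitive permutation group. Applying the dichotomy just proved yields the claim for both $\rU(F)$ and $\rU(F)^+$, the latter because type~${\rm I}$ passes to and from finite-index closed subgroups.

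The main obstacle I anticipate is not in this final assembly but upstream, in establishing Theorem~\ref{thm:intro:non-type-I} itself: showing that the absence of local $2$-transitivity forces the group von Neumann algebra to be non-amenable. Granting Theorem~\ref{thm:intro:non-type-I}, the remaining work is bookkeeping --- verifying the precise hypotheses under which the cited positive results apply (in particular that minimality on $\partial T$ together with the independence property and $2$-local transitivity does yield boundary transitivity, and that the independence property is the version used in \cite{amann-thesis, ciobotaru15}) --- and carefully handling the degenerate cases of trees that are a point, an edge, or a line.
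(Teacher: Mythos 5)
Your proposal follows essentially the same route as the paper: the ``only if'' direction comes from Theorem~\ref{thm:intro:non-type-I} after checking non-amenability (the paper does this via Nebbia's characterisation, Theorem~\ref{thm:nebbia}), the ``if'' direction combines the equivalence of local $2$-transitivity and boundary transitivity under Tits' independence property (Lemma~\ref{lem:independence-locally-two-transitive}) with the known type~${\rm I}$ theorem of Olshanskii--Amann--Ciobotaru, and the Burger--Mozes case is the specialisation via Lemma~\ref{lem:bm-local-global} together with the index-two observation for $\rU(F)^+$. This matches the paper's proof in structure and in every essential step.
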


We prove Theorem \ref{thm:intro:non-type-I} with operator algebraic methods.  The possibility to apply operator algebraic methods to study totally disconnected groups in general and groups acting on trees  in particular has been previously suggested by the second author.  Positive results exploiting the additional flexibility provided by this idea can be found in \cite{raum15-powers} and \cite{raum15-cocompact-amenable-subgroup}.  A locally compact group is of type ${\rm I}$ if and only if its maximal group \Cstar-algebra $\Cstarmax(G)$ is a type ${\rm I}$ \Cstar-algebra in the sense of \cite{glimm61}.  Further, it is a well-known fact for operator algebraists that every type ${\rm I}$ \Cstar-algebra is amenable.  This line of thoughts suggests to study non-amenability of operator algebras associated with groups acting on trees.  Since amenability of $\Cstarmax(G)$ implies amenability of the group von Neumann algebra $\rL(G)$, Theorem \ref{thm:intro:non-type-I} is an immediate consequence of the following operator algebraic result, which is the main result of the present article.  Its proof is based on the possibility to reduce considerations about amalgamated free products of groups to plain free products of von Neumann algebras, for which clear non-amenability criteria are available.
\begin{introtheorem}
  \label{thm:intro:non-amenability}
  Let $T$ be a locally finite tree and $G \leqc \Aut(T)$ a closed non-amenable subgroup acting minimally on $T$.  If $G$ does not act locally 2-transitive, then $\rL(G)$ is non-amenable.
\end{introtheorem}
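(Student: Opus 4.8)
The plan is to exploit Bass--Serre theory to realise $G$ as a fundamental group of a graph of groups with \emph{open} compact vertex and edge groups, and then to translate the resulting amalgamated-free-product (or HNN) decomposition of $G$ into a decomposition of $\rL(G)$ as an amalgamated free product of von Neumann algebras over a common amenable subalgebra. The key structural input is that when $G$ does not act locally $2$-transitively, there is a vertex $v$ whose local permutation group $G_v \curvearrowright E(v)$ fails to be $2$-transitive. Minimality of the action forces the local action at every vertex to be transitive (an intransitive local action at $v$ would let one prune a branch, contradicting minimality), so $G_v$ acts transitively but not $2$-transitively on the edges at $v$. Passing to the quotient graph $G\backslash T$ and the associated graph of groups, one obtains either an edge of the quotient graph carrying an amalgam $G = G_v \ast_{G_e} G_w$, or a loop carrying an HNN extension $G = \mathrm{HNN}(G_v, G_e, \theta)$; in both cases $G_e \leqc G_v$ is an \emph{open} subgroup, hence (as $G$ is totally disconnected and acts on a locally finite tree) $G_e$ is compact and of infinite index is not automatic --- but crucially the failure of local $2$-transitivity guarantees that the index $[G_v : G_e]$ is at least $3$ on (at least) one side, i.e. we get a genuinely ``big'' amalgam. (If both indices were $2$ on every edge and loops behaved trivially, $G$ would be amenable, contradicting the hypothesis; this is where non-amenability of $G$ enters.)

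Next I would pass to the von Neumann algebraic picture. Fix a Haar measure and let $K$ be a compact open subgroup (one of the edge groups, or a finite-index intersection of them). Then $\rL(G)$ acts on $\Ltwo(G)$ and the corner by the projection onto $K$-invariant vectors, or more cleanly the relative situation $(\rL(G), \rL(K))$, is a von Neumann algebra with an amenable (indeed, finite-dimensional-over-each-$K$-isotypic-component, since $K$ is compact) subalgebra $\rL(K) \subseteq \rL(G)$. Using the Bass--Serre decomposition and the fact that $\Cstarmax$ and hence $\rL$ of a fundamental group of a graph of groups over open subgroups decomposes as the corresponding amalgamated free product / HNN extension of von Neumann algebras (this is standard for discrete groups and extends to the locally compact setting once one works with the compact open subgroups and the induced conditional expectations coming from averaging over $K$), I would write, say in the amalgam case,
\[
\rL(G) \;=\; \rL(G_v) \ast_{\rL(G_e)} \rL(G_w),
\]
with faithful normal conditional expectations onto $\rL(G_e)$ given by Haar-averaging. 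The point of insisting on compact $G_e, G_v, G_w$ is that $\rL(G_e) \subseteq \rL(G_v)$ then comes with a \emph{trace-preserving} conditional expectation whose index-like invariant is controlled by $[G_v : G_e]$: concretely, $\rL(G_v)$ is a direct sum of matrix algebras and $\rL(G_e) \subseteq \rL(G_v)$ is a unital inclusion of finite-dimensional algebras of ``multiplicity'' reflecting the subgroup structure. The inequality $[G_v : G_e] \geq 3$ (on at least one side) is exactly what is needed to land outside the trivial range.

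Finally I would invoke the non-amenability criterion for (plain) free products of von Neumann algebras: if $(M, \tau) = (M_1, \tau_1) \ast (M_2, \tau_2)$ is a free product of tracial von Neumann algebras with $\dim M_1 \geq 3$ (or more precisely $M_1 \neq \CC$ and $M_2 \neq \CC$ and at least one is not $\CC^2$ with the uniform trace), then $M$ is non-amenable --- this goes back to the computations of free entropy / the structure of interpolated free group factors, and in the finite-dimensional case the free product literally contains a copy of a (possibly interpolated) free group factor $\rL(\freegrp{t})$ with $t > 1$, which is non-amenable. Since amalgamation over $\rL(G_e)$ dominates (contains a corner isomorphic to a cut-down of) a \emph{plain} free product of the relative commutants --- reduce by a minimal projection of $\rL(G_e)$ and use that $\rL(G_v)$ and $\rL(G_w)$ are free over $\rL(G_e)$, so the cut-downs become free over $\CC$ --- one extracts a non-amenable subalgebra with expectation inside (a corner of) $\rL(G)$, and non-amenability passes up: a von Neumann algebra with a non-amenable von Neumann subalgebra which is the range of a normal conditional expectation is itself non-amenable, and non-amenability is insensitive to passing to corners. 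This yields non-amenability of $\rL(G_v \ast_{G_e} G_w)$, hence of $\rL(G)$. The HNN case is handled the same way using that an HNN extension over a proper open subgroup contains an amalgam (or directly contains $\rL(\freegrp{2})$ after reduction).

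The main obstacle I anticipate is the \emph{locally compact} bookkeeping in the free-product decomposition of $\rL(G)$: for discrete groups ``$\rL$ of a fundamental group of a graph of groups is the amalgamated free product of the vertex von Neumann algebras over the edge ones'' is routine, but here $G$ is non-discrete, the group algebra is a convolution algebra of functions, and one must carefully set up the relative (amalgamated) free product with respect to the Haar-averaging conditional expectations onto the compact open edge groups, check freeness with amalgamation of the family $\{\rL(G_v)\}$ inside $\rL(G)$ with respect to the canonical expectation onto $\rL(G_e)$, and verify that the trace on $\rL(G)$ restricts correctly. The verification of freeness is essentially the normal-form theorem for elements of $G$ as words alternating between $G_v \setminus G_e$ and $G_w \setminus G_e$ (Bass--Serre), re-expressed at the level of the group algebra; once that is in place, the purely operator-algebraic conclusion (reduce to a plain free product, find an interpolated free group factor, conclude non-amenability) is comparatively standard.
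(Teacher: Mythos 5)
The core operator-algebraic mechanism you describe --- decompose $\rL(G)$ as an amalgamated free product over the group von Neumann algebra of a compact open edge group via Haar-averaging conditional expectations, cut down by a projection to turn the amalgam into a plain free product, and invoke Dykema's theorem and interpolated free group factors --- is exactly the engine of the paper (Proposition \ref{prop:amalgamation-group-to-vn-algebra} and Lemma \ref{lem:non-amenability-double-cosets}). But there are two genuine gaps. First, your structural claim that minimality forces the local action $G_v \grpaction{} \rE(v)$ to be transitive at every vertex is false: if the quotient graph $G \bs T$ is, say, a segment of length two, the middle vertex group has at least two orbits on the adjacent edges, yet the action on a thick tree with such a quotient is perfectly minimal. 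Consequently the dichotomy ``the quotient is an edge or a loop, so $G$ is an amalgam or an HNN extension'' does not hold; $G \bs T$ can be an arbitrary, even infinite, graph. Your argument therefore only covers the edge-transitive case (the paper's Proposition \ref{prop:edge-transitive-case}). The bulk of the paper's proof (Sections \ref{sec:proper-actions} and \ref{sec:proof-main-result}) is devoted precisely to the remaining cases: a reduction to cocompact, type-preserving actions on thick trees (Proposition \ref{prop:reduction}, which passes to a compactly generated open subgroup, quotients by a compact normal subgroup and contracts degree-two vertices), followed by a case analysis on the rank of $\pi_1(G^+\bs T)$ and on the shape of the quotient graph, where one must hunt for enough \emph{proper} edge-into-vertex inclusions and where the non-amenability of $G$ enters in a more delicate way than ``otherwise everything has index two''.

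Second, even in the edge-transitive case your quantitative criterion is off: what makes the cut-down free product non-amenable is not the index $[G_v : G_e] \geq 3$ but the double coset count $|G_e \bs G_v / G_e| \geq 3$, equivalently $\dim p\rL(G_v)p \geq 3$ for $p$ the averaging projection of $G_e$ (Proposition \ref{prop:dimension-hecke-algebra}). If $G_v \grpaction{} G_v/G_e$ is $2$-transitive of degree at least $3$, the index is at least $3$ but $|G_e\bs G_v/G_e| = 2$, and the resulting free product $\CC^2 * \CC^2 \cong \rL(\rD_\infty)$ is amenable --- this is exactly why boundary-transitive groups escape the theorem, and why the hypothesis is failure of local $2$-transitivity rather than a degree bound. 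The hypothesis you start from is the right one; you just have to carry it through as a Hecke-algebra dimension (a double coset count) rather than as an index.
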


Although we want the type ${\rm I}$ conjecture to be understood as the main motivation for our present work, our von Neumann algebraic techniques allow us to prove other non-amenability criteria.  We single out the class of groups acting properly and not edge-transitively on a tree $T$, but which not necessarily embed as subgroups of $\Aut(T)$.  If $G \grpaction{} T$, we denote by $G^+ \leq G$ the subgroup of type-preserving elements, which has at most index two.
\begin{introtheorem}
  \label{thm:intro:non-amenability-2}
  Let $T$ be a tree and $G \grpaction{} T$ a proper action of a locally compact group.  Let $X = G^+ \bs T$ be the quotient graph and note that $\pi_1(X)$ is a free group.  Under either of the following sets of assumptions, $\rL(G)$ is non-amenable.
  \begin{itemize}
  \item $\operatorname{rank} \pi_1(X) \geq 2$.
  \item $\operatorname{rank} \pi_1(X) = 1$ and $G$ is non-amenable.
  \item $\pi_1(X) = 0$, $T$ is thick, $X$ is finite but not an edge and $G$ is non-amenable.
  \end{itemize}
\end{introtheorem}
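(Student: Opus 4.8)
The plan is to combine Bass--Serre theory with the free-product machinery of the earlier sections. After a routine reduction to a faithful, finite-index incarnation of $G$, I exhibit this group in each of the three cases as a single amalgamated free product $A *_C B$, or as an HNN extension of a group over a subgroup, in such a way that $C$ (resp.\ the associated subgroup) is \emph{compact open} and the splitting is non-degenerate; then I invoke the corresponding non-amenability criterion for free products, resp.\ HNN extensions, of von Neumann algebras over a finite-dimensional subalgebra. The three hypotheses are precisely what is needed to rule out the degenerate configurations the criterion cannot detect; the basic one is that $X$ be distinct from a single edge, i.e.\ that we avoid the picture $K_1 *_C K_2$ realised by $\mathrm{SL}_2$ of a local field, whose group von Neumann algebra is type $\mathrm I$, hence amenable.

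\emph{Reductions and set-up.} Since $[G:G^+]\le 2$, the algebra $\rL(G)$ is amenable if and only if $\rL(G^+)$ is, so I replace $G$ by $G^+$, which acts without inversions; the kernel $N$ of the proper action is compact normal, and $\rL(G^+/N)\subseteq\rL(G^+)$ carries the normal conditional expectation obtained by averaging over $N$, so I may assume the action is faithful. Being proper, it has compact open vertex and edge stabilisers, and by the Bass--Serre theorem $G^+=\pi_1(\mathcal G,X)$ is the fundamental group of a graph of groups on $X=G^+\backslash T$ with these stabilisers as vertex and edge groups. Every edge of a chosen spanning tree $Y\subseteq X$ splits $G^+$ as a one-edge amalgam of two open subgroups over a compact open edge group, and every edge of $X\smallsetminus Y$ splits it as an HNN extension of an open subgroup over a compact open edge group. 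Recall also that $G^+$ is non-amenable: this is assumed in cases (ii) and (iii), and is automatic in case (i) because $G^+\twoheadrightarrow\pi_1(X)\twoheadrightarrow\mathbb F_2$.

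\emph{The three splittings.} If $\operatorname{rank}\pi_1(X)=0$, then $X$ is a finite tree which is not an edge, hence has a leaf $u$ whose incident edge $e=(u,v)$ leaves $X\smallsetminus\{u\}$ connected with a further edge; cutting along $e$ gives $G^+=\mathcal G_u *_{\mathcal G_e} B$, where $[\mathcal G_u:\mathcal G_e]=\deg_T(u)\ge 3$ by thickness at the leaf, and where $[B:\mathcal G_e]=\infty$ because $B=\pi_1(\mathcal G,X\smallsetminus\{u\})$ is a non-trivial amalgam over a compact open subgroup --- the side of that amalgam not containing $v$ having index $\ge 2$, again by thickness (vertex-to-edge-group indices multiply along a path to a leaf, and at a leaf this index is $\deg_T\ge 3$). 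If $\operatorname{rank}\pi_1(X)=1$, then $X\smallsetminus Y$ is a single edge $e_0$, so $G^+$ is an HNN extension of $\pi_1(\mathcal G,Y)$ over $\mathcal G_{e_0}$; were this extension ascending, its base $\pi_1(\mathcal G,Y)$ would equal an associated subgroup and hence be compact, making $G^+$ an ascending HNN extension of a compact group and therefore amenable --- against the hypothesis --- so the extension is non-ascending. If $\operatorname{rank}\pi_1(X)\ge 2$, remove one of the at least two edges of $X\smallsetminus Y$; the remaining graph still carries a cycle, so $H=\pi_1(\mathcal G,X\smallsetminus\{e\})$ is non-compact while $\mathcal G_e$ is compact, whence the presentation of $G^+$ as an HNN extension of $H$ over $\mathcal G_e$ is non-ascending. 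In each case the free-product reduction of the earlier sections identifies $\rL(A *_C B)$ with $\rL(A) *_{\rL(C)}\rL(B)$, resp.\ $\rL$ of the HNN extension with the corresponding von Neumann algebraic HNN extension, amalgamated over the \emph{finite-dimensional} algebra $\rL(C)$ (as $C$ is compact), and the non-degeneracy established above --- one side of infinite index, resp.\ non-ascending --- is exactly the input required by the attendant non-amenability criterion. Hence $\rL(G^+)$, and therefore $\rL(G)$, is non-amenable.

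\emph{The main obstacle} is the case analysis above and its exact fit with the hypotheses. For $\operatorname{rank}\pi_1(X)\ge 2$ one must genuinely use the surviving cycle: an HNN extension over its \emph{whole} base can be amenable (the ascending Baumslag--Solitar groups), so merely deleting an edge need not leave a non-degenerate piece. For $\operatorname{rank}\pi_1(X)=0$ the key point is that ``$X$ not an edge'' together with thickness always produces a one-edge cut with one side of infinite index, which is precisely what separates the allowed graphs from the excluded edge $K_1 *_C K_2$. For $\operatorname{rank}\pi_1(X)=1$ the non-amenability hypothesis is used exactly to exclude the ascending case. The remaining work --- openness of the Bass--Serre subgroups, the index bookkeeping, and the appeal to the already-available free-product and HNN non-amenability criteria over a finite-dimensional amalgam --- is routine.
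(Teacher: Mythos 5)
Your overall strategy (Bass--Serre splittings plus free-product non-amenability over a compact open amalgam) is the right one and matches the paper's, but there are two genuine gaps, one of which is fatal as written. In the rank-zero case your key claim that $[B:\mathcal G_e]=\infty$ for $B=\pi_1(\mathcal G,X\smallsetminus\{u\})$ is false in general, because the amalgam defining $B$ can \emph{telescope}. Take $X=[0,2]$ a segment with $u=0$, $e=(0,1)$ and $G_1=G_{(1,2)}$ (thickness only forces $[G_1:G_{(0,1)}]+[G_1:G_{(1,2)}]\geq 3$, so this is perfectly possible); then $B=G_1*_{G_{(1,2)}}G_2=G_2$ is compact and $[B:G_{(0,1)}]$ is finite. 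You are then left with $G^+=G_0*_C G_2$ with both indices finite, and mere index information cannot separate this from the genuinely amenable edge case: Dykema's criterion needs the \emph{double coset} counts $|C\backslash A/C|$, $|C\backslash B/C|$ to satisfy $\dim\geq 2$ on both sides and sum $\geq 5$, and index $3$ on each side is compatible with two double cosets on each side (the locally $2$-transitive, type ${\rm I}$ situation). The paper's Proposition \ref{prop:rank-zero-case}, Case 2, resolves exactly this configuration by exhibiting a chain of proper inclusions $G_0\gneq G_{(0,1)}=G_1\gneq G_{(1,2)}$, so that $C=G_{(1,2)}$ is a non-maximal subgroup of $G_0$ and Lemma \ref{lem:maximal-subgroups-number-of-double-cosets} forces $|C\backslash G_0/C|\geq 3$. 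This non-maximality/double-coset step is the essential combinatorial input your argument is missing, and without it the rank-zero case does not close.

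The second issue is your appeal, in the rank-one and rank-two cases, to a non-amenability criterion for von Neumann algebraic HNN extensions over a finite-dimensional subalgebra under the sole hypothesis ``non-ascending''. No such criterion is established in the paper (only the free-product Lemma \ref{lem:non-amenability-double-cosets} via Dykema), and as stated it is not an adequate input: when the base $B$ is compact and both associated subgroups have index $2$, the standard reduction to the amalgam $B*_{C'}tBt^{-1}$ yields double coset counts $2+2=4<5$, so one must pass to a longer segment $B*_{C'}B^t*_{(C')^t}B^{t^2}$ and invoke Lemma \ref{lem:non-amenability-graphs-of-groups}~\ref{it:segment} --- which is essentially what the paper's Proposition \ref{prop:rank-one-case} does, after locating three proper edge-to-vertex inclusions along the periodic line. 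In the rank $\geq 2$ case your argument is easily repaired (the base $H$ is non-compact, so $|C\backslash H/C|=\infty$ and $H*_{C'}H^t$ falls under Lemma \ref{lem:non-amenability-double-cosets}); the paper instead uses the semidirect product decomposition $\pi_1(\mathcal G,X)\cong\pi_1(\mathcal G,\tilde T)\rtimes\pi_1(X)$ and a subtree with three terminal edges, but both routes work. In summary: the architecture is sound, the rank $\geq 2$ case is correct modulo routine repair, but the rank-one compact-base subcase needs the segment trick spelled out, and the rank-zero case needs the non-maximality argument for double cosets; as written, the proof does not go through.
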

A tree is called thick, if each of its vertices has valency at least three.

While in the case of a discrete group $\Gamma$, the group von Neumann algebra $\rL(\Gamma)$ is amenable if and only if $\Gamma$ is amenable, it is even an open problem to provide general non-amenability criteria for the maximal group \Cstar-algebra of a non-discrete group.  A result demonstrating the surprising difficulty of this problem is provided by Connes \cite[Corollary 7]{connes76}, who shows that the maximal group \Cstar-algebra of a connected locally compact separable group is amenable.  Only Lau-Paterson were able to provide a non-amenability criterion of general nature, although their assumption of inner amenability is very strong \cite{laupaterson91}.  Our work contributes to the understanding of further non-amenability criteria.
\begin{introtheorem}
  \label{thm:intro:non-nuclear}
  Let $T$ be a locally finite tree and $G \leqc \Aut(T)$ a closed non-amenable subgroup acting minimally on $T$.  If $G$ does not act locally 2-transitive, then $\Cstarmax(G)$ is not nuclear.
\end{introtheorem}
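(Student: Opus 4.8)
The plan is to deduce Theorem~\ref{thm:intro:non-nuclear} directly from Theorem~\ref{thm:intro:non-amenability} by exploiting the well-known interplay between nuclearity of $\Cstarmax(G)$ and amenability of the weak closures of its representations. First I would recall that a \Calg $A$ is nuclear if and only if its universal enveloping von Neumann algebra $A^{**}$ is amenable (equivalently, injective); this is the Choi--Effros--Lance characterisation of nuclearity. For $A = \Cstarmax(G)$, the bidual $\Cstarmax(G)^{**}$ contains, as a direct summand corresponding to the left regular representation (or more directly, as the image under the central projection associated to any normal representation), a copy of the group von Neumann algebra $\rL(G)$. More precisely, since $\rL(G)$ is the von Neumann algebra generated by the image of $\Cstarmax(G)$ under the GNS representation of the Plancherel weight, there is a normal surjective $*$-homomorphism $\Cstarmax(G)^{**} \thra \rL(G)$.

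The key step is then the permanence of amenability under quotients by normal $*$-homomorphisms between von Neumann algebras: if $M$ is an amenable von Neumann algebra and $M \thra N$ is a normal surjection, then $N$ is amenable. Hence, if $\Cstarmax(G)$ were nuclear, $\Cstarmax(G)^{**}$ would be amenable, and therefore its quotient $\rL(G)$ would be amenable as well. But Theorem~\ref{thm:intro:non-amenability} asserts precisely that $\rL(G)$ is non-amenable under the stated hypotheses on $G \leqc \Aut(T)$. This contradiction forces $\Cstarmax(G)$ to be non-nuclear.

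I do not expect a genuine obstacle here, since the argument is essentially a soft functional-analytic deduction; the only point requiring minor care is to pin down the normal surjection $\Cstarmax(G)^{**} \thra \rL(G)$ correctly. One clean way to see it: the canonical surjection $\Cstarmax(G) \thra \Cstarred(G)$ extends to a normal surjection on the level of biduals, and then one composes with the standard normal surjection $\Cstarred(G)^{**} \thra \rL(G)$ given by the fact that $\rL(G)$ is generated by $\lambda(\Cstarred(G))$ acting on $\Ltwo(G)$. Alternatively, and perhaps more directly in the non-discrete setting, one uses that $\rL(G)$ is a von Neumann algebra generated by a representation of $\Cstarmax(G)$, so it is a quotient of $\Cstarmax(G)^{**}$ by the central projection supporting that representation. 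Either route only invokes standard facts about enveloping von Neumann algebras, and combined with the permanence of amenability under normal quotients, the proof is complete in a few lines.
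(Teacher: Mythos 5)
Your proposal is correct and follows essentially the same route as the paper: the paper obtains Theorem~\ref{thm:intro:non-nuclear} from Theorem~\ref{thm:intro:non-amenability} via the remark in the introduction that amenability (equivalently, nuclearity) of $\Cstarmax(G)$ implies amenability of $\rL(G)$, which is precisely the implication you establish via the Choi--Effros--Lance characterisation and the normal surjection $\Cstarmax(G)^{**} \thra \rL(G)$ coming from the left regular representation. Your write-up simply makes explicit the soft functional-analytic details that the paper leaves to the reader.
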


In line with the previous explanations and the success of operator algebraic methods applied to groups acting on trees, it is natural to pose the following problem, parallel to Problem \ref{prob:type-I}. 
\begin{introproblem}
  \label{prob:amenability}
  Characterise closed subgroups $G \leqc \Aut(T)$ for which $\rL(G)$ is amenable.  For which groups among these is $\Cstarmax(G)$ amenable?
\end{introproblem}

\section*{Acknowledgements}

The authors thank the Mittag-Leffler Institute and the organisers of the workshop ``Classification and dynamical systems II: Von Neumann Algebras'' as well as the Mathematisches Forschungsinstitut Oberwolfach and the organisers of the workshop ``\Cstar-algebras'' for providing excellent working conditions.  Part of our work on this project was completed during these workshops.  The second author thanks the University of M{\"u}nster for the excellent working conditions during the return phase of his \mbox{Marie Curie Fellowship}, when big parts of this work were done.

\section{Preliminaries}
\label{sec:preliminiaries}

In the proceeding extensive preliminaries we provide readers with either operator algebraic or group theoretic background with the necessary background to follow the main Sections \ref{sec:basic-non-amenability-results}, \ref{sec:proper-actions}, \ref{sec:proof-main-result} and \ref{sec:burger-mozes-groups}.

\subsection{Locally compact groups}
\label{sec:locally-compact-groups}

In this article we are working in the setting of topological groups and their morphisms.  This means that a homomorphism between topological groups is understood to be continuous and isomorphisms of topological groups are continuous bijective group homomorphisms with a continous inverse.

If $G$ is a locally compact group, we write $\int_G f(x) \rmd x$ for integration against a left Haar measure.  Here, the function $f$ on $G$ can take values in any Banach space, thanks to the theory of Bochner integrals.  We refer the reader to \cite{deitmarechterhoff14} for these and other basics about locally compact groups.

The following theorem characterises totally disconnected locally compact groups.  It is well-known to people working in group theory, but we give a short proof for the convenience of the reader.
\begin{theorem}[TG 39 in \cite{vandantzig36-topologische-algebra-iii}]
  Let $G$ be a locally compact group.  Then $G$ is totally disconnected if and only if its identity admits a basis of neighbourhoods consisting of compact open subgroups.
\end{theorem}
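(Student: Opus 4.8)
The plan is to prove the two implications separately, the forward direction being the substantial one. For \emph{sufficiency}, suppose $e$ has a neighbourhood basis consisting of compact open subgroups $U_i$. Each $U_i$, being an open subgroup, is also closed, since $G \setminus U_i$ is a union of (open) cosets. The connected component $G^{0}$ of the identity then satisfies $G^{0} \subseteq U_i$ for every $i$, because $G^{0} \cap U_i$ is a nonempty relatively clopen subset of the connected set $G^{0}$. Hence $G^{0} \subseteq \bigcap_i U_i = \{e\}$, and since every connected component of $G$ is a translate of $G^{0}$, the group $G$ is totally disconnected.

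For \emph{necessity}, the first step is a purely topological input, and it is the heart of the argument: a locally compact Hausdorff totally disconnected space has a neighbourhood basis of compact open sets. Fix a neighbourhood $V$ of $e$ and, using local compactness, a compact neighbourhood $K \subseteq V$. Working inside the compact Hausdorff space $K$, I would show that the quasi-component of a point $x$ (the intersection of all clopen subsets of $K$ containing $x$) equals its connected component. The inclusion ``component $\subseteq$ quasi-component'' is automatic. For the converse, if the quasi-component $Q$ of $x$ decomposed as $A \sqcup B$ with $A, B$ nonempty, closed, disjoint and $x \in A$, then normality of $K$ yields disjoint open sets $U \supseteq A$ and $V' \supseteq B$, and compactness produces a finite intersection $C$ of clopen sets with $x \in C \subseteq U \cup V'$; but then $C \cap U = C \setminus V'$ is clopen in $K$, contains $x$ and misses $B$, contradicting $B \subseteq Q \subseteq C \cap U$. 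Since $K$ is totally disconnected its components are singletons, so clopen sets separate points of $K$, which in a compact Hausdorff space forces the clopen sets to form a basis of the topology of $K$. Choosing $W$ clopen in $K$ with $e \in W \subseteq \operatorname{int}_G K$, one gets that $W$ is compact (closed in $K$) and $G$-open (the intersection of a $G$-open set with $\operatorname{int}_G K$), hence a compact open neighbourhood of $e$ contained in $V$.

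The second step promotes a compact open neighbourhood $W$ of $e$ to a compact open subgroup inside it. Continuity of multiplication together with compactness of $W$ yields an open neighbourhood $A$ of $e$ with $AW \subseteq W$: cover $W$ by open sets $B_w$ for which there are open $A_w \ni e$ with $A_w B_w \subseteq W$, extract a finite subcover $B_{w_1}, \dots, B_{w_n}$, and set $A = \bigcap_{j} A_{w_j}$. Put $V_0 = A \cap A^{-1} \cap W$, a symmetric open neighbourhood of $e$ with $V_0 \subseteq W$ and $V_0 W \subseteq W$, and let $H = \bigcup_{n \geq 1} V_0^{\,n}$ be the subgroup it generates. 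Induction on $n$, using $V_0^{\,n+1} \subseteq V_0 W \subseteq W$, gives $H \subseteq W$. As $H$ contains the open set $V_0$ it is an open subgroup, hence closed, hence a closed subset of the compact set $W$, hence compact. Thus $H$ is a compact open subgroup with $H \subseteq W \subseteq V$, and these subgroups form the desired neighbourhood basis of $e$.

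I expect the main obstacle to be the first step of the necessity direction, specifically the identification of quasi-components with connected components in the compact Hausdorff setting (equivalently, the zero-dimensionality of a locally compact Hausdorff totally disconnected space); the passage from a compact open neighbourhood to a compact open subgroup, and the sufficiency direction, are then routine manipulations with open subgroups.
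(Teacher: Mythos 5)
Your proof is correct and its group-theoretic core --- producing a symmetric open $V_0$ with $V_0 W \subseteq W$ by a compactness argument on the multiplication map and taking the subgroup $\bigcup_n V_0^{\,n} \subseteq W$ --- is essentially identical to the paper's argument. The only real difference is that you also prove the topological input (quasi-components equal components in compact Hausdorff spaces, hence a totally disconnected locally compact group has a basis of compact open neighbourhoods of $e$), which the paper silently assumes when it starts from ``let $U \subset G$ be a compact open neighbourhood of the identity''; your write-up is therefore the more complete of the two.
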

\begin{proof}
  If $G$ admits a basis of neighbourhoods consisting of compact open subgroups, then it is clear that the connected component of $e$ is $\{e\}$.  So $G$ is totally disconnected.

Assume that $G$ is totally disconnected and let $U \subset G$ be a compact open neighbourhood of the identity. We will find a compact open subgroup of $U$.  Let $\rmm: G \times G \ra G$ be the multiplication map.  Since $\{e\} \times U \subset \rmm^{-1}(U) $, for every $g \in U$ there is a neighbourhood $V_g \times U_g \subset \rmm^{-1}(U)$ of $(e, g)$.  Since $U$ is compact, we hence find identity neighbourhoods $V_1, \dots, V_n \subset G$ and open sets $U_1, \dotsc, U_n \subset G$  such that $V_i U_i \subset U$ for all $i \in \{1, \dotsc, n\}$ and $U = \bigcup_i U_i$.  Putting $V := \bigcap_i (V_i \cap V_i^{-1})$, we obtain a non-empty open symmetric set $V \subset U$ such that $V U \subset U$.  We conclude that the group $K = \bigcup_{k \in \NN} V^k \subset U$ is a compact open subgroup of $G$ lying in $U$.
\end{proof}

\begin{description}
\item[The unimodular part.]   We denote the modular function of  a locally compact group $G$ by $\Delta_G: G \ra \RR_{> 0}$.  The modular function of totally disconnected groups is nicely behaved.  If $K \leq G$ is a compact open subgroup of a locally compact group, then $\Delta_G|_K = \Delta_K \equiv 1$, shows that the kernel of $\Delta_G$ is open.  In this case, we write $G_0 := \ker \Delta_G$ for the unimodular part of $G$.
\end{description}

\subsection{Permutation groups}
\label{sec:permutation-groups}

An action of a topological group on a set is called a permutation action.  A permutation group is a group $G$ with a fixed faithful permutation action $G \grpaction{} X$.  We usually write $G \leq \Sym(X)$ for a permutation group.

If $G \grpaction{} X$ is a permutation action and $S \subset X$, we denote by $\Fix_G(S) = \{ g \in G \amid \forall s \in S: \, gs = s\}$ the pointwise stabiliser of $S$.  In case $S = \{s\}$ is a one-element set, we also write $\Fix_G(S) = G_s$.

\begin{definition}
  \label{def:two-transitivity}
  Let $G \grpaction{} X$ be a permutation action.  We say that $G$ acts 2-transitively, if $G_x \grpaction{} X \setminus \{x\}$ is transitive for every $x \in X$.
\end{definition}

\begin{remark}
  \label{rmk:two-transitivity}
  The notion of 2-transitivity for $G \grpaction{} X$ slightly defers from the usual definition.  If $|X| \geq 3$, then it is equivalent to the assumption that for each pairs $x_1 \neq x_2$ and $y_1 \neq y_2$ in $X$ there is some $g \in G$ such that $g x_i = y_i$ for $i \in \{1, 2\}$.  Only in case $|X| = 2$, our definition says that the trivial action is 2-transitive, while it does not satisfy the usual definition.

  We chose to adopt our notion of 2-transitivity to obtain clean formulations of all theorems about groups acting on trees, for which otherwise the vertices of degree two need a cumbersome separate treatment, complicating the theorems' statements.
\end{remark}

For an arbitrary topological group $G$ and an open subgroup $H \leq G$, the action $G \grpaction{} G/H$ is a permutation action.  The next lemma is a reformulation of the well-known fact that a 2-transitive permutation group is primitive.
\begin{lemma}
  \label{lem:maximal-subgroups-number-of-double-cosets}
  Let $H \leq G$ be an open subgroup of a topological group.  If $|H \bs G / H| \leq 2$, then $H \leq G$ is a maximal subgroup.
\end{lemma}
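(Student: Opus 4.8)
The plan is to verify directly that $H$ admits no proper overgroup other than $G$ itself, by exploiting the decomposition of $G$ into $(H,H)$-double cosets.

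First I would fix an arbitrary subgroup $K$ with $H \leq K \leq G$ and record that $K$ is necessarily a union of $(H,H)$-double cosets. Indeed, for every $k \in K$ one has $HkH \subseteq K$ because $H \subseteq K$; conversely $K = \bigcup_{k \in K}\{k\} \subseteq \bigcup_{k \in K} HkH \subseteq K$, so $K = \bigcup_{k\in K} HkH$. (Openness of $H$ forces $K$ to be open as well, being a union of cosets of $H$, but this plays no role in the present argument.)

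Next I would invoke the hypothesis $|H \bs G / H| \leq 2$. The trivial double coset $HeH = H$ lies in $K$ in any case. If $|H \bs G / H| = 1$ then $G = H$ and there is nothing to prove. If $|H \bs G / H| = 2$, write $G = H \sqcup HgH$ for some $g \in G \setminus H$; then $K$, being a union of double cosets that contains $H$, equals either $H$ or $H \sqcup HgH = G$. Either way $K \in \{H, G\}$, which is precisely the maximality of $H$.

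There is no genuine obstacle here beyond keeping track of the degenerate case $|H \bs G / H| = 1$, which is exactly why the statement is phrased with ``$\leq 2$'' rather than ``$= 2$''; this matches the convention for $2$-transitivity adopted in Definition~\ref{def:two-transitivity} and Remark~\ref{rmk:two-transitivity}, and the lemma is simply the translation of ``primitive $\Rightarrow$ maximal point stabiliser'' for the action $G \grpaction{} G/H$.
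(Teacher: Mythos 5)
Your proof is correct and rests on the same key observation as the paper's, namely that any intermediate subgroup $H \leq K \leq G$ is a union of $(H,H)$-double cosets; the paper merely runs the argument contrapositively (a proper intermediate subgroup forces at least two double cosets inside it and one outside, hence $|H \bs G / H| \geq 3$), while you argue directly that $K$ must exhaust either one or both of the at most two double cosets. The difference is purely presentational.
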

\begin{proof}
  Assume that there is a proper inclusion of open subgroups $H \leq \tilde H \leq G$ of the topological group $G$.  Then $\tilde H \subset G$ is a $H$-biinvariant set, so that $H \bs G / H = (H \bs \tilde H / H) \cup (H \bs (G \setminus \tilde H) / H)$.  Since $H \leq \tilde H$ is a proper inclusion, $|H \bs \tilde H /H| \geq 2$ and $|H \bs G / H| \geq 3$ follows.  This proves the lemma.
\end{proof}

\subsection{Groups acting on trees}
\label{sec:groups-acting-on-trees}

We follow Serre's formalism of undirected trees \cite{serre80}.  A graph $X$ is a set of vertices $\rV(X)$ with a set of (directed) edges $\rE(X)$ as well as maps $\rmo, \rmt: \rE(X) \ra \rV(X)$ and a involutive operation taking opposite edges $e \mapsto \ol{e}$ such that $\ol{e} \neq e$, $\rmo(\ol{e}) = \rmt(e)$ and $\rmt(\ol{e}) = \rmo(e)$ for all $e \in \rE(X)$.  If $X, Y$ are graphs, a graph homomorphism $\vphi: X \ra Y$ is a pair of maps $\vphi_\rV:\rV(X) \ra \rV(Y)$ and $\vphi_\rE:\rE(X) \ra \rE(Y)$ such that $\rmt_Y \circ \vphi_\rE = \vphi_\rV \circ \rmt_X$ and $\rmo_Y \circ \vphi_\rE = \vphi_\rV \circ \rmo_X$.

\begin{description}
\item[Segments and paths.]
  The standard segment of length $n$ is written $[0,n]$.  Its set of vertices is 
  \begin{equation*}
    \rV([0,n]) = \{0, \dotsc, n\}
  \end{equation*}
  and its edges are pairs 
  \begin{equation*}
    \rE([0,n]) = \{ (i, i+1) \amid i \in \{0, n-1\}\} \cup \{(i, i - 1) \amid i \in \{1, \dotsc, n\}\}
  \end{equation*}
  with $\rmo(i, j) = i$, $\rmt(i,j) = j$ and $\ol{(i,j)} = (j,i)$ for all $(i,j) \in \rE([0, n])$.  A path in a graph $X$ is graph homomorphism $[0,n] \ra X$.  We set $\rmo(s) = s(0)$ and $\rmt(s) = s(n)$ for a path $s: [0,n] \ra X$.

\item[Trees.]
  A graph $X$ is connected if there is a path between pairs of vertices in $X$.  A circuit in $X$ is the image of an injective path $s: [0, n] \ra X$ with $\rmo(s) = \rmt(s)$ for some $n \geq 1$.  A tree is a connected non-empty graph without circuits.  Let $T$ be a tree.  For $v \in \rV(T)$ we write $\rE(v) = \{e \in \rE(T) \amid \rmo(e) = v\}$ for the neighbouring edges of $v$.  We call $T$ locally finite if $\rE(v)$ is finite for all $v \in \rV(T)$.  We call $T$ thick if $\rE(v)$ contains at least three elements for all $v \in \rV(T)$.

\item[Automorphisms of a tree.]
  The group $\Aut(T)$ of graph automorphisms of a tree $T$ naturally identifies with the subgroup $\Aut(T) = \{g \in \Sym(\rV(T)) \amid v \sim w \, \Leftrightarrow \, g(v) \sim g(w)\}$ and thus inherits a totally disconnected group topology, which is uniquely defined by declaring vertex stabilisers open subgroups of $\Aut(T)$.  An action of a topological group $G$ on a tree $T$ is a continuous group homomorphism $G \ra \Aut(T)$.  If $T$ is locally finite, then vertex stabilisers are compact in $\Aut(T)$.  If $T$ is a tree and $G \grpaction{} T$ is an action, then the following statements are equivalent.
\begin{itemize}
\item $G_v \leq G$ is compact for all $v \in \rV(T)$.
\item $G \grpaction{} T$ is proper.
\end{itemize}
If further, $G \leq \Aut(T)$ embeds as a subgroup, then both previous statements are equivalent to $G \leq \Aut(T)$ being closed.

\item[Locally 2-transitive actions.]
  A group action $G \grpaction{} T$ on a tree is called locally 2-transitive if for every vertex $v \in \rV(T)$ the natural action $G_v \grpaction{} \rE(v)$ is 2-transitive.

\item[Type-preserving automorphisms.]
  An element $g \in \Aut(T)$ is called type-preserving if $2 \divides \rmd(gv, v)$ for all $v \in \rV(T)$.  Denote by $\Aut(T)^+ \leq \Aut(T)$ the group of type-preserving automorphisms.  Partitioning $\rV(T)$ is two classes by $v \sim w$ if and only if $2 \divides \rmd(v, w)$, we obtain a quotient map $\rV(T) \mapsto \{0, 1\}$.  Since $\Aut(T)$ preserves this partition, we obtain a map $\Aut(T) \mapsto \rS_2$, whose kernel is $\Aut(T)^+$.  This shows that $\Aut(T)^+ \leq \Aut(T)$ is an open subgroup of index at most two.  If $G \grpaction{} T$ is a group action on a tree, we denote by $G^+$ the inverse image of $\Aut(T)^+$ under the action map $G \ra \Aut(T)$ and call $G \grpaction{} T$ type-preserving if $G = G^+$.

  Note that if $G \grpaction{} T$ is proper, then also the type-preserving part $G^+ \leq G$ acts properly, because the restriction of a proper action to a closed subgroup remains proper.

\item[Minimal actions on trees.]
  A group action $G \grpaction{} T$ on a tree is called minimal, if $T$ is the smallest non-empty $G$-invariant subtree of $T$.

\item[Ends of a tree.]
  The standard ray $[0, \infty)$ is a tree with vertices $\rV([0, \infty)) = \NN$ and edges
\begin{equation*}
  \rE([0, \infty)) = \{(i, i+1) \amid i \in \NN\} \cup \{(i, i - 1) \amid i \in \NN_{\geq 1}\}
\end{equation*}
with $\rmo(i,j) = i$, $\rmt(i,j) = j$ and $\ol{(i,j)} = (j,i)$ for all $(i,j) \in \rE([0,\infty))$.  A geodesic ray in a tree $T$ is an injective graph homomorphism $[0, \infty) \ra T$.  Two geodesic rays are called equivalent, if after shifting they eventually agree.  Formally, $\xi \sim \xi'$ if there are $n_0 \in \NN$ and $m \in \ZZ$ such that $\xi(n + m) = \xi'(n)$ for all $n \geq n_0$.  An end of $T$ is an equivalence class of geodesic rays in $T$.

\item[Hyperbolic elements.]
  The standard two-sided geodesic $(-\infty, \infty)$ is a tree with vertices $\rV((-\infty, \infty)) = \ZZ$ and edges $\rE((-\infty, \infty)) = \{(i, i+1) \amid i \in \ZZ \} \cup \{(i , i - 1) \amid i \in \ZZ\} $.  The origin and target functions are $\rmo(i,j) = i$ and $\rmt(i, j) = j$.  The opposite edge of $(i,j) \in \rE((-\infty, \infty))$ is $\ol{(i, j)} = (j,i)$.  A (two-sided) geodesic in a tree $T$ is an injective graph homomorphism $(-\infty, \infty) \ra T$.  An element $g \in \Aut(T)$ is called hyperbolic if it neither fixes a vertex nor an edge (formally: a set $\{e, \ol{e}\} \subset \rE(T)$).  For every hyperbolic element $g \in \Aut(T)$ there is a unique two-sided geodesic $\xi$ in $T$ which is setwise fixed.  The unique $\ell \in \NN$ such that $g \circ \xi (n) = \xi(n + \ell)$ for all $n \in \ZZ$ is called the translation length of $g$.
\end{description}

The following result characterises amenable groups acting on trees.
\begin{theorem}[Theorem 1 in \cite{nebbia88}]
  \label{thm:nebbia}
  Let $T$ be a locally finite tree and $G \leqc \Aut(T)$ a closed subgroup.  Then $G$ is amenable if and only if one of the following statements holds
  \begin{itemize}
  \item $G$ fixes a vertex.
  \item $G$ stabilises an edge.
  \item $G$ fixes a point in $\partial T$.
  \item $G$ stabilises a pair of points in $\partial T$.
  \end{itemize}
\end{theorem}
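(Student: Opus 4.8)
The plan is to prove the two implications separately. The implication that any one of the four geometric conditions forces amenability is elementary and uses only local finiteness of $T$; the real content lies in the converse, where amenability of $G$ must be turned into one of the four alternatives.

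For the easy direction I would dispose of the four cases in turn. If $G$ fixes a vertex $v$, then $G\leq G_v$, and $G_v$ is compact because vertex stabilisers in $\Aut(T)$ are compact for locally finite $T$; compact groups are amenable. If $G$ stabilises an edge, the index-at-most-two subgroup fixing both of its endpoints fixes a vertex, so $G$ is a finite extension of a compact group, hence amenable. If $G$ fixes an end $\xi$, I would fix a geodesic ray $\rho\colon[0,\infty)\to T$ representing $\xi$ and compare $g\rho$ with $\rho$ up to a shift, obtaining a homomorphism $\beta\colon G\to\ZZ$; one checks that $\ker\beta=\bigcup_n G_{\rho(n)}$ is an increasing, hence directed, union of compact open subgroups and therefore amenable, and since $G/\ker\beta$ embeds into $\ZZ$, the group $G$ is an extension of amenable groups, hence amenable. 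If $G$ stabilises a pair of ends $\{\xi,\xi'\}$, it stabilises the bi-infinite geodesic $L$ joining them; the index-at-most-two subgroup $G'$ fixing each of $\xi,\xi'$ acts on $L\cong\ZZ$ by translations, giving a homomorphism $G'\to\ZZ$ whose kernel fixes $L$ pointwise, hence fixes a vertex and is compact; so $G'$, and then $G$, is amenable.

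For the converse, assume $G$ is amenable. If $T$ is finite its centre is a vertex or an edge and is preserved by $G$, which gives the first or second alternative; so I may assume $G$ fixes no vertex and stabilises no edge, whence $G$ has no finite invariant subtree, $T$ is infinite, and $\partial T$ is non-empty and compact. I would now split according to whether $G$ contains a hyperbolic element. If every element of $G$ is elliptic, then by the classical structure theorem for tree actions all of whose elements are elliptic (see \cite{serre80,tits70}) the group $G$ fixes a vertex or a unique end; since the former is excluded, $G$ fixes an end. If $G$ contains a hyperbolic element $h$ with axis ends $h^-\neq h^+$, I would use the fixed-point property of amenable locally compact groups to produce a $G$-invariant probability measure $\mu$ on the compact space $\partial T$. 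The north--south dynamics of $h$ then pin $\mu$ down: for every $f\in\cont(\partial T)$ one has $\int f\,d\mu=\int f\circ h^n\,d\mu$, and letting $n\to\infty$ (using $h^n\eta\to h^+$ for $\eta\neq h^-$, while $h^-$ is fixed) yields $\int f\,d\mu=(1-a)f(h^+)+af(h^-)$ with $a=\mu(\{h^-\})$, so that $\mu=(1-a)\delta_{h^+}+a\,\delta_{h^-}$. Hence $\operatorname{supp}\mu\subseteq\{h^-,h^+\}$ is a non-empty $G$-invariant set of size one or two: in the first case $G$ fixes an end, in the second $G$ stabilises the pair $\{h^-,h^+\}$. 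In all cases one of the four alternatives holds.

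The only ingredient that is neither formal nor elementary is the structure theorem for tree actions with all elements elliptic, used in the hyperbolic-free case; everything else rests on the fixed-point property of amenable groups and the elementary north--south dynamics of a hyperbolic automorphism. An alternative route avoiding that theorem: a closed subgroup of $\Aut(T)$ fixing no vertex, stabilising no edge, fixing no end and stabilising no pair of ends must contain two hyperbolic elements whose axes have four distinct endpoints, and a ping-pong argument then yields a discrete --- hence closed --- free subgroup of rank two inside the amenable group $G$, which is impossible. It is worth noting that local finiteness is used only to make $\partial T$ and the vertex stabilisers compact, and closedness of $G$ only to ensure that ``amenable'' refers to the locally compact group $G$.
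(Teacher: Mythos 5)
The paper does not actually prove this statement: it is quoted verbatim as Theorem~1 of Nebbia's paper \cite{nebbia88} and used as a black box, so there is no in-paper argument to compare yours against. Judged on its own, your proof is correct and is the standard geometric/dynamical argument. The easy direction is handled properly (note that in the end-fixing case the union $\bigcup_n G_{\rho(n)}$ is indeed increasing because an element fixing both $\rho(n)$ and the end $\xi$ fixes the whole ray from $\rho(n)$ towards $\xi$, and each $G_{\rho(n)}$ is a compact \emph{open} subgroup of $G$, so the directed union is amenable). For the converse, the reduction to ``no bounded invariant subtree'', the dichotomy elliptic/hyperbolic, the invariant probability measure on the compact space $\partial T$ coming from Day's fixed-point property, and the north--south computation pinning $\mu$ to $\{h^-,h^+\}$ are all sound. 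The only genuinely non-formal external input is the structure theorem for actions all of whose elements are elliptic (Tits/Serre), which you cite; your proposed alternative via a ping-pong pair producing a discrete free subgroup is essentially the Tits alternative for trees (Pays--Valette) and is in fact closer in spirit to Nebbia's original approach. Two cosmetic points: in the edge-stabilising case the group is itself compact (a union of at most two cosets of a compact open subgroup), not merely a finite extension of one; and in the measure computation you should make explicit that the support of $\mu$ is a closed non-empty $G$-invariant subset of $\{h^-,h^+\}$ because $\mu$ is $G$-invariant, which is what converts the dynamical conclusion into one of the last two alternatives.
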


\subsection{Bass-Serre theory}
\label{sec:bass-serre-theory}

Bass-Serre theory as described in \cite{serre80} (see in particular Section 5 in there) provides a natural way to study groups acting on trees $G \grpaction{} T$ by means of the quotient graph $G \bs T$ together with vertex and edge stabilisers.  The general fundamental assumption of Bass-Serre theory is that $G \grpaction{} T$ must act without inversions, i.e. if $g \in G$ fixes a geometric edge of $T$, then it fixes both its ends.  It follows from the definition that every type-preserving action satisfies this assumption.  Bass-Serre theory was originally built for discrete groups, not taking into account topologies.  Its extension to topological groups however is straight forward, as we will clarify at the end of this section.

\begin{description}
\item[Graphs of groups.]
  A graph of groups is a graph $X$ with vertex groups $(G_v)_{v \in \rV(X)}$ and edge group $(G_e)_{e \in \rE(X)}$ as well as inclusions $G_e \hra G_{\rmt(e)}$ such that $G_e = G_{\ol{e}}$.  We denote this graph of groups by $(G, X)$ for short.

\item[Fundamental group of a graph of groups.]
  If $(G, X)$ is a graph of groups, then Bass-Serre theory provides a tree $T$ --- called universal covering of $(G, X)$ --- with an action of a group $\pi_1(G, X)$ on $T$, such that $X = \pi_1(G, X) \bs T$ and $(G, X)$ is obtained by considering vertex and edge stabilisers of lifted edges from $X$ to $T$.  This construction provides a one-to-one correspondence between isomorphism classes of graphs of groups and groups acting on trees.  See Theorem 13 in \cite{serre80}.  If $G \grpaction{} T$ is a group acting on a tree with quotient graph $X = G \bs T$, we will use the convenient notation $(G, X)$ for the graph of groups obtained from this action.

\item[Contractions of subtrees.]
  See \cite[p.46ff]{serre80}. If $(G, X)$ is a graph of groups and $(G, Y)$ is a subgraph, then $\pi_1(G, Y)$ can be naturally identified with a subgroup of $\pi_1(G, X)$.  Contracting $Y \leq X$ to a vertex, we obtain a graph $X/Y$.  The contraction can be naturally turned in a graph of groups such that the vertex group of the contracted vertex $Y \in \rV(X/Y)$ is $\pi_1(G, Y)$.  We denote this graph of groups by $(G, X/Y)$.  Now we have the identity of fundamental groups $\pi_1(G, X) = \pi_1(G, X/Y)$ extending uniquely the natural inclusion of vertex and edge stabilisers of $(G,X)$ into $\pi_1(G, X/Y)$.

\item[Semi-direct product decomposition.]
  See \cite[p. 45, exercise]{serre80}.  If $(G, X)$ is a graph of groups, then the universal cover $\tilde T$ of $X$ in the usual sense can be naturally turned into a tree of groups whose vertex and edge groups are isomorphic to vertex and edge groups of $X$. We denote this tree of groups by $(G, \tilde T)$ and call it the covering tree of groups of $(G, X)$.  If $\Gamma = \pi_1(X)$ is the usual fundamental group of the graph $X$, then the action of $\Gamma$ by Deck transformations on $\tilde T$ induces an action on $\pi_1(G, \tilde T)$ and we obtain a natural isomorphism $\pi_1(G, X) \cong \pi_1(G, \tilde T) \rtimes \Gamma$.

\item[Graphs of topological groups.]  If $T$ is a tree and $G \grpaction{} T$ an action (which is understood to be continuous) of a topological group, then Bass-Serre theory naturally applies and is compatible with the topology of $G$.  Denote by $X = G \bs T$ the quotient graph and by $(G, X)$ the associated graph of groups.  In this context, vertex and edge stabilisers of $G \grpaction{} T$ are topological groups and inclusion homomorphisms are continuous and open.  Since $G$ as a topological group is uniquely determined by the abstract group $G$ together with the topology on vertex stabilisers, it makes sense to speak about graphs of topological groups.
\begin{definition}
  \label{def:graph-of-topological-groups}
  A graph of topological groups is a graph of groups $(G, X)$ with the structure of a topological group on each vertex and edge stabiliser such that inclusion homomorphisms are continuous and open.
\end{definition}
Based on Bass-Serre theory and Serre's ``d{\'e}vissage'' it is not difficult to prove that the fundamental group of a graph of topological groups carries a unique group topology turning the inclusion of vertex groups into continuous and open homomorphisms.  All previously mentioned constructions and statements remain valid in the topological setting.  For later use, we remark in particular that the semi-direct product decomposition $\pi_1(G, X) \cong \pi_1(G, \tilde T) \rtimes \pi_1(X)$ for a graph of topological groups $(G,X)$ and the covering tree of groups $(G, \tilde T)$ of $(G, X)$ gives rise to an embedding of $\pi_1(G, \tilde T)$ as an open subgroup of $\pi_1(G, x)$.  We fix the following notation: a locally compact amalgamated free product is an amalgamated free product with an open locally compact amalgam.  As previously discussed a locally compact amalgamated free product is naturally a locally compact group.
\end{description}

\subsection{Von Neumann algebras}
\label{sec:von-neumann-algebras}

Let $H$ be a complex Hilbert space and $\bo(H)$ the *-algebra of all bounded linear operators on $H$.  The topology of pointwise convergence on $\bo(H)$ is called the strong operator topology.
\begin{definition}
  \label{def:vN-algebra}
  A von Neumann algebra is a unital strongly closed *-subalgebra of $\bo(H)$ for some Hilbert space $H$.
\end{definition}

\begin{description}
\item[The $\sigma$-weak topology.]
  Since the norm topology on $\bo(H)$ is finer than the strong operator topology, every von Neumann algebra is naturally a Banach space.  By a result of Sakai \cite{sakai71}, a von Neumann algebra admits a unique isometric predual $M_*$, that is a Banach space satisfying $(M_*)^* \cong M$ isometrically.  The weak-*-topology on $M$ is called the $\sigma$-weak topology.  A positive linear map (in particular a *-homomorphism) $\vphi: M \ra N$ between von Neumann algebras is called normal if it is $\sigma$-weakly continuous.  

\item[Traces and finite von Neumann algebras.]
  A positive functional $\tau: M \ra \CC$ on a von Neumann algebra is called a trace if $\tau(xy) = \tau(yx)$ for all $x, y \in M$.  A von Neumann algebra is called finite if it admits a faithful family of normal traces, that is a family $(\tau_i)_i$ of normal traces such that $\tau_i(x^*x) = 0$ for all $i$ implies $x = 0$.

\item[Factors.]
  A factor is a von Neumann algebra $M$ with trivial centre $\cZ(M) := M \cap M' = \CC 1$.  If $M$ is an infinite dimensional factor with a non-zero trace, then $M$ is called a ${\rm II}_1$ factor.  The non-zero trace on a ${\rm II}_1$ factor is unique up to normalisation.

\item[Positive elements.] 
  We denote by $M^+ = \{ x^*x \amid x \in M\}$ the set of all positive elements in a von Neumann algebra $M$. A linear map $\vphi: M \ra N$ between von Neumann algebras is called positive if $\vphi(M^+) \subset N^+$.

\item[Conditional expectations.]
  If $N \subset M$ is an inclusion of von Neumann algebras, a conditional expectation $\rE: M \ra N$ is a projection of norm one.  It is called normal if it is $\sigma$-weakly continuous.  It satisfies $\rE(n_1 m n_2) = n_1 \rE(m) n_2$ for all $n_1, n_2 \in N$ and all $m \in M$.

\item[Weights.]
    A weight on a von Neumann algebra $M$ is an additive and positive homogeneous map $\vphi: M^+ \ra \RR_{\geq 0} \cup \{\infty\}$.  We say that $\vphi$ is faithful, if $\vphi(x) = 0$ implies $x = 0$ for every $x \in M^+$.  The weight $\vphi$ is called normal if $\sup_i \vphi(x_i) = \vphi( \sup_i x_i)$ for every bounded ascending net $(x_i)$ of positive elements in $M$.  Here $\sup_i x_i$ denotes the smallest upper bound for the net $(x_i)_i$.  One calls $\mathfrak n_\vphi = \{x \in M \amid \vphi(x^*x) < \infty\}$ the set of 2-integrable elements.  If $\vphi$ is a normal weight and $\mathfrak n_\vphi \subset M$ is $\sigma$-weakly dense, then $\vphi$ is called semifinite.  A normal faithful semifinite weight is abbreviated to an nfsf weight.

\item[Modular automorphism group.]
  If $\vphi$ is an nfsf weight on a von Neumann algebra $M$, the set $\mathfrak n_\vphi$ with the scalar product $\langle x,y \rangle := \vphi(y^*x)$  can be completed to a Hilbert space $\Ltwo(M, \vphi)$ on which $M$ is faithfully represented via left multiplication.  The map $S: x \mapsto x^*$ on $\mathfrak n_\vphi \cap \mathfrak n_\vphi^* \subset \Ltwo(M, \vphi)$ defines a conjugate linear closable unbounded operator, whose polar decomposition is denoted by $\ol{S} = J \Delta^{1/2}$.  For every $t \in \RR$, the operator $\Delta^{it}$ is a well-defined unitary on $\Ltwo(M, \vphi)$.  Tomita-Takesaki theory \cite{takesaki03-II} says that the conjugation $(\Ad \Delta^{it})_{t \in \RR}$ defines a one-parameter automorphism group of $\bo(\Ltwo(M, \vphi))$ that preserves $M$.  Its restriction to $M$ is denoted by $(\sigma^\vphi_t)_t$ and it is called the modular automorphism group of $\vphi$.
\end{description}

\subsection{Group von Neumann algebras}
\label{sec:group-vn-algebras}

We refer the reader to \cite{deitmarechterhoff14} for an introduction to locally compact groups, their representations and convolution algebras.  Let $G$ be a locally compact group and $\lambda_G: G \ra \cU(\Ltwo(G))$ its left-regular representation. It satisfies $(\lambda_G(g)f)(x) = f(g^{-1}x)$ for all $f \in \contc(G)$ and $g, x \in G$.  The group von Neumann algebra of $G$ is by definition
\begin{equation*}
  \rL(G) := \{\lambda_G(g) \amid g \in G\}'' \subset \bo(\Ltwo(G))
  \eqstop
\end{equation*}
We usually write $u_g = \lambda_G(g)$ for the canonical unitaries in $\rL(G)$.  They span an isomorphic copy of $\CC G$, to which we refer without explicitly mentioning $\lambda_G$.  Von Neumann's bicommutant theorem says that $\rL(G)$ is the strong and the $\sigma$-weak closure of the set $\CC G$.  After choice of a left Haar measure on $G$, the Bochner integral provides a natural *-homomorphism $\Lone(G) \ra \rL(G): f \mapsto \int_G f(g) \lambda_G(g) \rmd g$, which we will also denote by $\lambda_G$.  If no confusion is possible, we write $\Lone(G) \subset \rL(G)$ instead of $\lambda_G(\Lone(G)) \subset \rL(G)$.

The convolution algebra $\contc(G)$ is a left Hilbert algebra in the sense of \cite[Chapter VI.1]{takesaki03-II}.  After choice of a left Haar measure it defines a nfsf weight $\vphi$ on $\rL(G)$ that satisfies $\vphi(f) = f(e)$ for all $f \in \contc(G) \subset \rL(G)$.  This weight is called a (left) Plancherel weight of $\rL(G)$.  It satisfies $\vphi(g^* * f) = \langle f , g \rangle$ for all $f, g \in \contc(G) \subset \rL(G) \cap \Ltwo(G)$.  The modular autormorphism group of $\vphi$ satisfies $\sigma^\vphi_t(u_g) = \Delta_G^{it}(g) u_g$ for all $g \in G$.  If $G$ is a discrete group, the Plancherel weight associated with the counting measure extends to the natural normal trace $\tau: \rL(G) \ra \CC$ satisfying $\tau(u_g) = \delta_{e,g}$ for all $g \in G$.  

The next proposition is well-known and clarifies the relation between the group von Neumann algebras of a locally compact group and its closed subgroups.  It can be found for example as Theorem~A of \cite{herz72}.
\begin{proposition}
  \label{prop:closed-subgroup-vn-alg}
  Let $H \leq G$ be a closed subgroup of a locally compact group.  Then the group homomorphism $H \ni h \mapsto \lambda_G(h) \in \cU(\rL(G))$ extends to a unique injective normal *-homomorphism $\rL(H) \ra \rL(G)$.
\end{proposition}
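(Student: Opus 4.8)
The plan is to separate the routine uniqueness assertion from the substantive existence assertion. For uniqueness, recall that by von Neumann's bicommutant theorem $\CC H = \lspan\{\lambda_H(h) \amid h \in H\}$ is a $\sigma$-weakly dense $*$-subalgebra of $\rL(H)$. Any normal $*$-homomorphism $\Theta \colon \rL(H) \ra \rL(G)$ with $\Theta(\lambda_H(h)) = \lambda_G(h)$ for all $h \in H$ is forced on $\CC H$ by linearity, hence on all of $\rL(H)$ since two $\sigma$-weakly continuous maps agreeing on a $\sigma$-weakly dense set agree everywhere. This is also the precise sense in which $\Theta$ ``extends'' the homomorphism $H \ni h \mapsto \lambda_G(h)$: precomposing $\Theta$ with $H \ni h \mapsto \lambda_H(h)$ returns it. It remains to construct $\Theta$.

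For existence, the key input is the classical description of the restriction of the left regular representation to a closed subgroup: $\Res^G_H \lambda_G$ is unitarily equivalent to $\lambda_H \ot 1_\cK$ acting on $\Ltwo(H) \Ht \cK$ for a Hilbert space $\cK \neq 0$. I would build the intertwining unitary explicitly. Choose a Borel section $s \colon H\bs G \ra G$ of the canonical projection (which exists by Mackey's theorem), giving a Borel isomorphism $\Phi \colon H \times (H\bs G) \ra G$, $(h, \omega) \mapsto h\,s(\omega)$, compatible with left $H$-translation in the first coordinate. Pulling back a left Haar measure of $G$ along $\Phi$ yields a measure on $H \times (H\bs G)$ invariant under left translation by $H$ in the first coordinate; by uniqueness of Haar measure on $H$ together with disintegration it is therefore a product $dh \times \mu$ for some measure $\mu$ on $H\bs G$. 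Consequently $U \colon \Ltwo(G) \ra \Ltwo(H) \Ht \Ltwo(H\bs G, \mu)$, $(Uf)(h,\omega) = f(h\,s(\omega))$, is a well-defined unitary, and a direct computation from $(\lambda_G(h_0)f)(g) = f(h_0^{-1}g)$ shows $U \lambda_G(h_0) U^{*} = \lambda_H(h_0) \ot 1$ for every $h_0 \in H$, with $\cK = \Ltwo(H\bs G, \mu) \neq 0$. (Alternatively one can invoke $\lambda_G = \Ind_{\{e\}}^G 1 = \Ind_H^G \lambda_H$ and Mackey's restriction formula.)

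Granting this, $\Ad U$ carries the subalgebra $\{\lambda_G(h) \amid h \in H\}'' \subseteq \rL(G)$ onto $\{\lambda_H(h) \ot 1 \amid h \in H\}''$, which equals $\rL(H)\, \vnt\, \CC 1$, either by the Tomita commutation theorem for tensor products or simply because $x \mapsto x \ot 1$ is a normal $*$-homomorphism whose range is thus a von Neumann algebra, containing the generators $\lambda_H(h) \ot 1$ and contained in their bicommutant. The desired $\Theta$ is then the composition
\begin{equation*}
  \rL(H) \xrightarrow{\ x \,\mapsto\, x \ot 1\ } \rL(H)\, \vnt\, \CC 1 \xrightarrow{\ \Ad(U^{*})\ } \{\lambda_G(h) \amid h \in H\}'' \hookrightarrow \rL(G) \eqstop
\end{equation*}
Amplification by $1_\cK$ is injective (since $\cK \neq 0$) and normal, the spatial isomorphism induced by $U$ is injective and normal, and the inclusion of a von Neumann subalgebra is injective and normal; hence $\Theta$ is an injective normal $*$-homomorphism, and by construction $\Theta(\lambda_H(h)) = \lambda_G(h)$. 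The only genuine obstacle is the measure-theoretic step inside the existence argument --- producing the Borel section and controlling the pulled-back Haar measure, i.e.\ the foundations of quasi-invariant measures on homogeneous spaces --- while everything downstream is formal von Neumann algebra manipulation. In the write-up I would either invoke the decomposition $\Res^G_H \lambda_G \cong \lambda_H \ot 1_\cK$ as standard or include the short self-contained verification sketched above.
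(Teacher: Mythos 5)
Your proof is correct, but it takes a genuinely different route from the paper. The paper works on the predual side: it identifies $\rL(G)_* \cong \rA(G)$ with Eymard's Fourier algebra, invokes the Herz restriction theorem to see that the restriction map $\rA(G) \ra \rA(H)$ is surjective, and concludes that the dual map $\rA(H)^* \ra \rA(G)^*$, which sends $\ev_h$ to $\ev_h$ and hence $\lambda_H(h)$ to $\lambda_G(h)$, is an injective normal embedding. You instead work spatially, using the classical decomposition $\Res^G_H \lambda_G \cong \lambda_H \ot 1_\cK$ on $\Ltwo(H) \Ht \cK$ and transporting the amplification $x \mapsto x \ot 1$ back through the intertwining unitary; the downstream von Neumann algebra manipulations (the image of a normal injective $*$-homomorphism is a von Neumann algebra, $\{\lambda_H(h) \ot 1\}'' = \rL(H)\, \vnt\, \CC 1$ by normality of the amplification, uniqueness from $\sigma$-weak density of $\CC H$) are all fine. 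Each approach leans on one nontrivial classical input: yours on the Mackey--Bruhat theory of Borel sections and quasi-invariant measures on $H \bs G$ (which you correctly isolate as the only substantive step; note that for non-second-countable $G$ one should cite Feldman--Greenleaf for the locally bounded Borel transversal, or first reduce to an open $\sigma$-compact subgroup), the paper's on Herz's restriction theorem for the Fourier algebra. Your route has the advantage of producing an explicit spatial implementation of the embedding and of making the multiplicity space $\cK$ visible, which is sometimes useful; the paper's route is shorter once the Fourier algebra machinery is granted and dualizes a statement (surjectivity of $\rA(G) \ra \rA(H)$) that is of independent interest elsewhere in harmonic analysis.
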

\begin{proof}
  Denote by $\rA(G)$ Eymard's Fourier algebra \cite[Chapitre 3]{eymard64}, which is a Banach algebra densely spanned by continuous positive type functions with compact support in $G$.  By Theorem 3.10 of \cite{eymard64} we have $\rL(G)_* = \rA(G)$, i.e. there is an isomorphism $\rL(G) \cong \rA(G)^*$ carrying the $\sigma$-weak topology onto the weak-*-topology.  This isomorphism identifies $u_g \in \rL(G)$ with the evaluation functional $\ev_g \in \rA(G)^*$ for all $g \in G$.

Since $H \leqc G$ is a closed subgroup, every compactly supported function of positive type on $G$ restricts to a compactly supported function of positive type on $H$.  So Proposition 3.4 in \cite{eymard64} shows that the restriction gives rise to well-defined map $\rA(G) \ra \rA(H)$.  By Theorems 1a and 1b of \cite{herz72} (see also Theorem 4.21 of \cite{mcmullen72}), every element of $\rA(H)$ can be extended to an element of $\rA(G)$.  This shows surjectivity of the restriction map $\rA(G) \ra \rA(H)$.  It follows that the dual map $\rA(H)^* \ra \rA(G)^*$ is injective.  In view of the first paragraph this finishes the proof of the proposition.
\end{proof}

\begin{description}
\item[Averaging projections.] 
  Applied to a compact subgroup $K \leq G$ of a locally compact group, the previous proposition shows that the Bochner integral $p_K := \int_K \lambda_G(k) \rmd k \in \rL(G)$ defines a projection. Here we integrate against the Haar probability of $K$.  It is the image of $\mathbb 1_K \in \contc(K) \subset \rL(K) \subset \rL(G)$.  This projection is called averaging projection associated with $K \leq G$.
\end{description}

If $H \leqo G$ is an open subgroup, the inclusion $\rL(H) \subset \rL(G)$ from Proposition \ref{prop:closed-subgroup-vn-alg} admits a natural conditional expectation.  Also this fact is well-known.  It follows from Theorem 3.1(a) in \cite{haagerup77-dual-weights-2} in the special case $M = \CC 1$ and $\vphi = \mathbb 1_H$.  We give a short proof only for the readers convenience.
\begin{proposition}
  \label{prop:conditional-expectation-open-subgroups}
  Let $H \leq G$ be an open subgroup of a locally compact group.  Then the embedding $H \leq G$ extends to a unique injective normal *-homomorphism $\rL(H) \subset \rL(G)$.  Further, there is a unique normal conditional expectation $\rE: \rL(G) \ra \rL(H)$ satisfying $\rE(u_g) = \mathbb 1_H(g) u_g$ for all $g \in G$.
\end{proposition}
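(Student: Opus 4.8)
I would first observe that the opening assertion requires no new work: an open subgroup of a topological group is closed, so Proposition~\ref{prop:closed-subgroup-vn-alg} already supplies the injective normal $*$-homomorphism $\rL(H) \hra \rL(G)$, which I henceforth treat as an inclusion. The content is therefore the construction of $\rE$ together with its uniqueness.

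For the construction the plan is to proceed spatially. Since $H$ is open in $G$, the restriction to $H$ of a fixed left Haar measure of $G$ is a left Haar measure of $H$; with this choice $\Ltwo(H)$ is exactly the closed subspace $\mathbb 1_H \cdot \Ltwo(G)$ of $\Ltwo(G)$, and I let $p \in \bo(\Ltwo(G))$ be multiplication by $\mathbb 1_H$, i.e.\ the orthogonal projection onto it. Define $\Phi : \bo(\Ltwo(G)) \ra \bo(\Ltwo(H))$ by $\Phi(x) = p x p|_{\Ltwo(H)}$. This map is unital, positive and contractive, and it is $\sigma$-weakly continuous because $\langle p x p\, \xi, \eta \rangle = \langle x\, p\xi, p\eta \rangle$ is a vector functional for all $\xi, \eta$. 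The key computation is that for $g \in G$ the operator $\lambda_G(g)$ carries $\Ltwo(H)$ into the subspace of $\Ltwo$-functions supported on $gH$; since $H \cap gH = \emptyset$ when $g \notin H$ while $gH = H$ when $g \in H$, this yields $\Phi(\lambda_G(g)) = \mathbb 1_H(g)\, \lambda_H(g)$ for every $g$. Hence $\Phi$ maps the $\sigma$-weakly dense subalgebra $\CC G \subset \rL(G)$ onto $\CC H \subset \rL(H)$, and by $\sigma$-weak continuity $\Phi$ restricts to a normal, unital, positive contraction $\rE := \Phi|_{\rL(G)} : \rL(G) \ra \rL(H)$ with $\rE(u_g) = \mathbb 1_H(g) u_g$ for all $g \in G$. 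Because $\rE$ fixes every generator $u_h$ of $\rL(H)$, normality gives $\rE|_{\rL(H)} = \id$, so $\rE$ is indeed a conditional expectation onto $\rL(H)$.

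Uniqueness is then formal: two normal conditional expectations $\rL(G) \ra \rL(H)$ that both send $u_g \mapsto \mathbb 1_H(g) u_g$ agree on $\CC G$, hence on its $\sigma$-weak closure $\rL(G)$. The one step that is not mere bookkeeping is the inclusion $\Phi(\rL(G)) \subseteq \rL(H)$, i.e.\ that the $\mathbb 1_H$-compression of $\rL(G)$ again lands inside $\rL(H)$; I expect this to be the main (if modest) obstacle, and I would obtain it precisely as above, from $\sigma$-weak density of $\CC G$ combined with the coset computation $\Phi(\lambda_G(g)) = \mathbb 1_H(g)\lambda_H(g)$. Alternatively one could invoke the commutation theorem for $H$ — $p$ commutes with the right regular representation of $H$, so $\Phi(x)$ commutes with $\rho_H(H)$ and hence lies in $\rho_H(H)' = \rL(H)$ — but the density argument keeps the proof self-contained, and everything else (positivity, normality, the values on the $u_g$, and uniqueness) is routine.
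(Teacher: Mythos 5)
Your proof is correct, but it takes a genuinely different route from the paper's. The paper constructs $\rE$ abstractly via modular theory: it takes the Plancherel weight $\vphi$ on $\rL(G)$, checks that $\vphi$ restricts to a semifinite weight on $\rL(H)$ (using $\contc(H) \subset \contc(G)$, which is where openness enters) and that $\rL(H)$ is invariant under the modular automorphism group $\sigma^\vphi$, and then invokes Takesaki's theorem to produce the unique $\vphi$-preserving normal conditional expectation, identifying it afterwards on $\contc(G)$ as the restriction map $\contc(G) \ra \contc(H)$. You instead build $\rE$ spatially, as the compression by the projection $p = \mathbb 1_H$ onto $\Ltwo(H) = \mathbb 1_H \cdot \Ltwo(G)$, and verify the coset computation $p\lambda_G(g)p = \mathbb 1_H(g)\lambda_H(g)$ directly; normality, the formula on the $u_g$, and the inclusion $\Phi(\rL(G)) \subseteq \rL(H)$ then all follow from $\sigma$-weak continuity of compression and density of $\CC G$, and the norm-one idempotence makes $\rE$ a conditional expectation in the paper's sense without any appeal to Tomita--Takesaki theory. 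Your argument is more elementary and self-contained (openness of $H$ enters only through the identification $\Ltwo(H) \cong \mathbb 1_H \Ltwo(G)$ and the disjointness $H \cap gH = \emptyset$ for $g \notin H$); what the paper's route buys is the additional property $\vphi \circ \rE = \vphi$, i.e.\ compatibility with the Plancherel weight, and it is the argument that generalises to inclusions admitting no convenient spatial corner. Both proofs settle uniqueness identically, by $\sigma$-weak density of $\CC G$ in $\rL(G)$. The only point you pass over quickly is the identification of the spatially defined $\rL(H) \subset \bo(\Ltwo(H))$ with the copy of $\rL(H)$ inside $\rL(G)$ furnished by Proposition~\ref{prop:closed-subgroup-vn-alg}, but since $\Phi(\lambda_G(h)) = \lambda_H(h)$ for $h \in H$, composing $\Phi$ with that embedding is routine and does not affect the argument.
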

\begin{proof}
  The fact that $h \mapsto \lambda_G(h)$ extends to a unique embedding $\rL(H) \hra \rL(G)$ is the content of Proposition \ref{prop:closed-subgroup-vn-alg}.  Let us construct $\rE$.  Denote by $\vphi: \rL(G)^+ \ra [0,+\infty]$ a Plancherel weight on $\rL(G)$.  The dense subalgebra $\contc(G) \subset \rL(G)$ consists of $\vphi$-integrable elements and $\vphi(f) = f(e)$ for all $f \in \contc(G)$.  Since $H \leq G$ is open, we have $\contc(H) \subset \contc(G)$.  Further, $\contc(H) \subset \rL(H)$ is a $\sigma$-weakly dense subalgebra, implying that $\vphi$ is semifinite on $\rL(H)$.  Further, $\rL(H)$ is $\sigma^\vphi$-invariant.  By Takesaki's theorem \cite{takesaki71} there is a unique normal conditional expectation $\rE: \rL(G) \ra \rL(H)$ satisfying $\vphi(\rE(x)) = \vphi(x)$ for all $x \in \mathfrak m_\vphi$.  For $f \in \contc(H) \subset \contc(G)$ we have $\rE(f) = f$.  For $f \in \contc(G \setminus H)$ and $g \in \contc(H)$, we have $g * f \in \contc(G \setminus H)$ and $\vphi(g \rE(f)) = \vphi(g f) = 0$.  So $\rE(f) = 0$.  This shows that $\rE|_{\contc(G)}$ is the restriction map $\contc(G) \ra \contc(H)$.  If $g \in G \setminus H$, then $u_g$ is a $\sigma$-weak limit of elements in $\contc(G \setminus H)$, so that $\rE(u_g) = 0$ follows.  This proves existence of $\rE$.  Uniqueness follows from the fact that $\CC G \subset \rL(G)$ is $\sigma$-weakly dense.
\end{proof}

In case $K \unlhd G$ is a compact subgroup of a locally compact group, the group von Neumann algebras $\rL(G)$ and $\rL(G/K)$ can also be compared in a natural way.  This is the content of the next well-known proposition.
\begin{proposition}
  \label{prop:quotient-is-corner}
  Let $G$ be a locally compact group and $K \unlhd G$ a compact normal subgroup.  Then the averaging projection $p$ associated with $K$ defines a central projection in $\rL(G)$ such that $p \rL(G) \cong \rL(G/K)$.  In particular, $\rL(G)$ is non-amenable, if $\rL(G/K)$ is non-amenable.
\end{proposition}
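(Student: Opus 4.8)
The plan is to produce the central projection $p$ explicitly as the averaging projection of $K$ and then identify its corner with $\rL(G/K)$.

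\medskip

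\textbf{Step 1: $p$ is central.} By the discussion preceding Proposition \ref{prop:quotient-is-corner}, $p = \int_K \lambda_G(k)\,\rmd k$ is a projection in $\rL(G)$ (integrating against Haar probability of $K$), and it equals $\lambda_G(\mathbb 1_K)$. For $g \in G$ one computes $u_g p u_g^* = \int_K \lambda_G(gkg^{-1})\,\rmd k$, which equals $p$ because $K \unlhd G$ and Haar measure on $K$ is invariant under the conjugation automorphism $k \mapsto gkg^{-1}$ of $K$. Since the $u_g$ generate $\rL(G)$ and $\rL(G)$ is the $\sigma$-weak closure of $\CC G$, it follows that $p$ is central: it commutes with a $\sigma$-weakly dense $*$-subalgebra, hence with all of $\rL(G)$.

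\medskip

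\textbf{Step 2: identify $p\rL(G)$ with $\rL(G/K)$.} Let $q: G \ra G/K$ be the quotient map. The plan is to build a normal $*$-homomorphism $\rL(G/K) \ra p\rL(G)p = p\rL(G)$ and show it is a surjective isomorphism. Concretely, push forward: $\contc(G)$ surjects onto $\contc(G/K)$ by integrating over $K$-cosets, $(\Phi f)(gK) = \int_K f(gk)\,\rmd k$, and dually there is an embedding $\contc(G/K) \hra \contc(G)$ given by $\bar f \mapsto \bar f \circ q$ whose image lies in the fixed-point algebra $p\rL(G)p$ (functions constant on $K$-cosets correspond to elements absorbed by $p$ on both sides). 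One checks this respects convolution: for $\bar f_1, \bar f_2 \in \contc(G/K)$, after normalising Haar measures compatibly via Weil's formula $\int_G h = \int_{G/K}\int_K h(gk)\,\rmd k\,\rmd(gK)$, the convolution $(\bar f_1 \circ q) * (\bar f_2 \circ q)$ equals $(\bar f_1 * \bar f_2)\circ q$ (the inner $K$-integration contributes the factor $1$ from the probability normalisation). This gives a $*$-isomorphism of convolution algebras $\contc(G/K) \cong p\CC G p$ mapping $\delta_{gK}$-type generators to $p u_g p = p u_g$. On the analytic side, the Plancherel weight $\vphi$ of $\rL(G)$ restricted to $p\rL(G)p$ is a nfsf weight whose GNS space is naturally $\Ltwo(G/K)$ (square-integrable $K$-invariant functions), and under this identification left multiplication by $pu_gp$ becomes $\lambda_{G/K}(gK)$. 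Hence the isomorphism of the dense convolution algebras extends to a normal $*$-isomorphism $\rL(G/K) \cong p\rL(G)$, equivariantly for the canonical unitaries.

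\medskip

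\textbf{Step 3: non-amenability.} Amenability passes to corners by projections: if $\rL(G)$ were amenable then $p\rL(G)p = p\rL(G) \cong \rL(G/K)$ would be amenable, since a von Neumann algebra compression $pMp$ of an amenable von Neumann algebra $M$ by a projection $p \in M$ is amenable. Contrapositively, non-amenability of $\rL(G/K)$ forces non-amenability of $\rL(G)$.

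\medskip

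\textbf{Main obstacle.} The only genuinely technical point is Step 2: carefully matching the normalisation of Haar measures on $G$, $K$, and $G/K$ so that the convolution identity $(\bar f_1\circ q)*(\bar f_2\circ q) = (\bar f_1*\bar f_2)\circ q$ holds on the nose, and verifying that the GNS/Plancherel picture identifies the two Hilbert spaces and the two left regular representations compatibly. Both are standard consequences of Weil's integration formula together with the identification of the Plancherel weight on a corner, so no serious difficulty is expected; it is largely bookkeeping.
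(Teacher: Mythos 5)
Your proposal is correct and follows essentially the same route as the paper: centrality of $p$ via conjugation-invariance of the Haar measure on $K$, identification of $p\Ltwo(G)$ with $\Ltwo(G/K)$ intertwining the left regular representations (the paper writes down the explicit isometry $V:\Ltwo(G/K)\ra\Ltwo(G)$, $(Vf)(g)=f(gK)$, where you phrase the same identification through the convolution algebras and the Plancherel weight), and passage of amenability to the corner $p\rL(G)$ via the conditional expectation $x\mapsto px$. No gaps; the Weil-formula bookkeeping you flag is exactly what the paper's verification on the dense subalgebra $\contc(G/K)$ amounts to.
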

\begin{proof}
  Recall that we can write $p = \int_K u_k \rmd k$ as a Bochner integral against the Haar probability measure of $K$.  We have 
  \begin{align*}
    \langle u_g p u_g^* \xi , \eta \rangle
    & =
    \int_K \langle u_{g k g^{-1}} \xi, \eta \rangle \rmd k \\
    & =
    \int_K \langle u_{\phantom{.}^g k} \xi, \eta \rangle \rmd k \\
    & =
    \int_K \langle u_k \xi, \eta \rangle \rmd k \\
    & =
    \langle p \xi, \eta \rangle
    \eqcomma
  \end{align*}
  for all $g \in G$ and $\xi, \eta \in \Ltwo(G)$.  The third equality follows from the fact that the Haar measure on $K$ is invariant under the conjugation action of $G$.  So $p \in \rL(G) \cap \CC G' = \cZ(\rL(G))$.

  Note that $(p \xi)(g) = \int_K \xi(kg) \rmd k$ for all $\xi \in \contc(G) \subset \Ltwo(G)$, so that $p \Ltwo(G) = \Ltwo(G)^K$ follows.  Consider the map $V: \Ltwo(G/K) \ra \Ltwo(G)$ defined by $(Vf)(g) = f(gK)$ for $f \in \contc(G/K)$.  Since $K \unlhd G$ is compact and normal, $V$ is well-defined and isometric.  A short calculation shows that $V \Ltwo(G/K) = \Ltwo(G)^K = p \Ltwo(G)$, meaning that $VV^* = p$.  So $V: \Ltwo(G/K) \ra \Ltwo(G)^K$ is a unitary.  Denoting the canonical unitaries in $\rL(G/K)$ by $v_{gK}$, $gK \in G/K$, another calculation on the dense subset $\contc(G/K)$ verifies that $p u_g V = V v_{gK}$ for every $g \in G$.  This shows $V^* p \rL(G) p V = \rL(G/K)$.  

Since $x \mapsto px$ is a conditional expectation (even a *-homomorphism) from $\rL(G)$ onto $p \rL(G) \cong \rL(G/K)$, it follows from Proposition \ref{prop:amenability-conditional-expectation} that non-amenability of $\rL(G/K)$ implies non-amenability of $\rL(G)$.
\end{proof}

\subsubsection{Hecke (von Neumann) algebras}
\label{sec:hecke-vN-algebras}

On the level of group algebras, there is a replacement for the quotient $G/K$ of a locally compact group $G$ by a compact normal subgroup $K \unlhd G$, even if we drop the assumption of normality.  This replacement is provided by Hecke algebras.

\begin{definition}
  \label{def:hecke-vN-algbera}
  Let $G$ be a totally disconnected group and $K \leq G$ a compact open subgroup.  Then $(G, K)$ is called a Hecke pair.  Let $p = p_K \in \contc(G)$  be the averaging projection associated with $K$.  Then $\contc(G,K) : = p \contc(G) p$ is called the Hecke algebra of the pair $(G,K)$ and $\rL(G, K) := p \rL(G) p$ is called the Hecke von Neumann algebra of the pair $(G,K)$.
\end{definition}

\begin{remark}
  By a result of Tzanev \cite{tzanev03} our definition of a Hecke algebra and a Hecke von Neumann algebra agree with the usual definitions.  That is, $\contc(G, K)$ is the set of all compactly supported $K$-biinvariant functions in $\contc(G)$ and $\rL(G, K)$ is the von Neumann algebra closure of $\contc(G, K)$ in its representation on $\ltwo(K \bs G)$.
\end{remark}

We will need the following formula for the dimension of a Hecke algebra in later applications.
\begin{proposition}
  \label{prop:dimension-hecke-algebra}
  Let $(G, K)$ be a Hecke pair.  Then $\dim \contc(G, K) = |K \bs G /K|$.
\end{proposition}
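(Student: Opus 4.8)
The plan is to identify an explicit vector space basis for $\contc(G,K)$ indexed by the double cosets in $K \bs G / K$, which immediately gives the dimension count. First I would recall from the remark following Definition~\ref{def:hecke-vN-algbera} (Tzanev's result) that $\contc(G,K) = p_K \contc(G) p_K$ coincides with the set of compactly supported $K$-biinvariant functions on $G$. So the whole statement reduces to showing that the space of compactly supported $K$-biinvariant functions has dimension equal to the number of double cosets.

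The key observation is that a $K$-biinvariant function is constant on each double coset $KgK$, and since $K$ is open, each double coset $KgK$ is an open subset of $G$; moreover, since $K$ is compact and open, $KgK$ is a finite union of left cosets of $K$, hence compact and open. Therefore the indicator function $\mathbb 1_{KgK}$ lies in $\contc(G,K)$ for every $g \in G$. I would then argue that the collection $\{\mathbb 1_{KgK}\}$, as $KgK$ ranges over the distinct double cosets, is linearly independent (the double cosets are pairwise disjoint) and spans $\contc(G,K)$: any $f \in \contc(G,K)$ has compact support, which, being a union of double cosets and covered by the open double cosets, meets only finitely many of them, so $f = \sum_{i} f(g_i)\, \mathbb 1_{Kg_iK}$ is a finite linear combination. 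This yields a bijection between a basis of $\contc(G,K)$ and $K\bs G/K$, hence $\dim \contc(G,K) = |K\bs G/K|$.

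The only genuine point requiring care — and the main (minor) obstacle — is verifying that each double coset $KgK$ is compact open, since that is what guarantees $\mathbb 1_{KgK} \in \contc(G)$ in the first place and also that the support of any compactly supported $K$-biinvariant function hits only finitely many double cosets. This is where compactness and openness of $K$ are both used: openness of $K$ makes $KgK = \bigcup_{k\in K} Kgk$ open (a union of open left cosets), and compactness of $K$ together with openness of the $Kgk$ forces this union to reduce to finitely many cosets, hence $KgK$ is compact. Everything else is a routine bookkeeping argument about disjointness of double cosets and finiteness of covers of compact sets, so I would keep that brief.
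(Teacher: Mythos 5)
Your proof is correct and follows essentially the same route as the paper: both exhibit the indicator functions $\mathbb 1_{KgK}$, indexed by $K \backslash G / K$, as a basis of $\contc(G,K)$. The only real difference is that you deduce linear independence from disjointness of the double cosets (and carefully justify the spanning step the paper dismisses as clear), whereas the paper verifies independence by showing the elements $p u_g p$ are pairwise orthogonal with respect to the Plancherel weight.
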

\begin{proof}
  We write $p = \mathbb 1_K \in \contc(G)$ for the averaging projection associated with $K \leq G$.  If $K g K \in K \bs G /K$, then $p u_g p = \mathbb 1_{KgK} \in \contc(G, K)$.  Further, it is clear that these elements generate $\contc(G, K)$ as a linear space.  Let $\vphi: \contc(G) \ra \CC$ be (the linear extension of) a Plancherel weight on $\contc(G) \subset \rL(G)$.  For $KgK \neq KhK$, we have $\vphi((p u_g p)^*pu_hp) = (\mathbb 1_K * \mathbb 1_{gK} * \mathbb 1_{h^{-1}K})(e) = 0$, since $e \notin K g K h^{-1} K$. This shows that the elements $p u_g p$ are pairwise orthogonal in $\Ltwo(G) \supset \contc(G)$.  In particular, $(p u_g p)_{KgK \in K \bs G /K}$ is a linearly independent family in $\contc(G, K)$.  This shows $\dim \contc(G, K) = | K \bs G /K|$.  
\end{proof}

\subsubsection{Group factors}
\label{sec:group-factors}

The following criterion describes discrete groups whose group von Neumann algebra is a factor.  In the well-known proof, we make use of the right-regular representation $\rho_G: G \ra \cU(\Ltwo(G))$ of a locally compact group $G$, which satisfies $(\rho_G(g)f)(x) = f(xg)$ for all $f \in \contc(G)$ and $g, x \in G$.

\begin{proposition}
  \label{prop:group-factor}
  Let $\Gamma$ be a discrete group.  Then $\rL(\Gamma)$ is a  factor if and only if every non-trivial conjugacy class in $\Gamma$ is infinite. If $\Gamma$ is non-trivial, then $\rL(\Gamma)$ is a ${\rm II}_1$ factor.
\end{proposition}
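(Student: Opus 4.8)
The plan is to prove both directions of the ``factor'' equivalence and then observe that the ${\rm II}_1$ conclusion is nearly immediate. Write $M = \rL(\Gamma)$ with its canonical trace $\tau$, canonical unitaries $(u_g)_{g\in\Gamma}$, and recall that every $x \in M$ has a formal Fourier expansion $x = \sum_{g\in\Gamma} \widehat{x}(g) u_g$ with $\widehat{x}(g) = \tau(x u_g^*)$, and that $\sum_g |\widehat{x}(g)|^2 = \|x\|_2^2 < \infty$. The key computational fact is how conjugation acts on Fourier coefficients: for $h \in \Gamma$ one checks $\widehat{u_h x u_h^*}(g) = \widehat{x}(h^{-1}gh)$, so $x$ commutes with every $u_h$ if and only if $\widehat{x}$ is constant on conjugacy classes of $\Gamma$.

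First I would prove the ``if'' direction: assume every non-trivial conjugacy class is infinite and let $x \in \cZ(M) = M \cap M'$. Since $x$ commutes with all $u_h$, the function $\widehat{x}$ is conjugation-invariant; but $\widehat{x}$ is square-summable, and any infinite conjugacy class would force $\widehat{x}$ to vanish there (an infinite set on which a constant is square-summable forces that constant to be $0$). Hence $\widehat{x}$ is supported on $\{e\}$, i.e. $x = \widehat{x}(e)\,u_e \in \CC 1$, so $\cZ(M) = \CC 1$ and $M$ is a factor. For the converse (``only if''), suppose some non-trivial conjugacy class $C$ of $h \neq e$ is finite. Then I would set $z = \sum_{g \in C} u_g \in \CC\Gamma \subset M$; this is a finite sum, hence a bona fide element of $M$, it is non-scalar because $C$ is a non-trivial finite set not containing $e$, and it is central since the formal Fourier coefficient function $\mathbbm{1}_C$ is conjugation-invariant (one verifies $u_h z u_h^* = z$ directly by relabelling the sum). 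Thus $\cZ(M) \neq \CC 1$ and $M$ is not a factor.

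Finally, for the last sentence: if $\Gamma$ is non-trivial and $\rL(\Gamma)$ is a factor, then $\rL(\Gamma)$ carries the faithful normal tracial state $\tau$, so it is a finite factor; it is not finite-dimensional because a finite-dimensional factor is a matrix algebra $\Mat{n}{\CC}$ whose center-valued considerations would force $\Gamma$ to be finite with all non-trivial classes infinite, which is impossible unless $\Gamma$ is trivial --- more directly, the pairwise-orthogonal family $(u_g)_{g\in\Gamma}$ in $\Ltwo(M,\tau)$ is infinite whenever $\Gamma$ is infinite, and when $\Gamma$ is finite and non-trivial it has a non-trivial finite conjugacy class (e.g. the class of any central element, or just note $|\Gamma| < \infty$ makes every class finite) so $\rL(\Gamma)$ is not even a factor. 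Hence a non-trivial $\Gamma$ with $\rL(\Gamma)$ a factor is automatically infinite, $\rL(\Gamma)$ is infinite-dimensional, and therefore a ${\rm II}_1$ factor by the definition recalled in the Preliminaries; uniqueness of the trace up to scaling was also recorded there.

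I expect the only real subtlety to be the clean handling of the converse when $\Gamma$ is finite versus infinite (making sure the ``non-trivial finite conjugacy class'' argument and the infinite-dimensionality claim are phrased without circularity), but this is routine once the Fourier-coefficient/conjugation-invariance dictionary is in place; the forward direction is a one-line square-summability argument.
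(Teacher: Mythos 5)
Your proposal is correct and follows essentially the same route as the paper: the Fourier coefficient $\widehat{x}(g)=\tau(xu_g^*)$ is exactly the coordinate $\langle x\delta_e,\delta_g\rangle$ the paper uses, both proofs show centrality forces this square-summable function to be constant on conjugacy classes, and both exhibit the sum over a finite non-trivial class as a non-scalar central element for the converse. Your treatment of the ${\rm II}_1$ statement (non-trivial plus icc forces $\Gamma$ infinite, hence $\rL(\Gamma)$ infinite-dimensional and tracial) also matches the paper's.
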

\begin{proof}
  If $\Gamma$ has a non-trivial finite conjugacy class $c \subset \Gamma$, then $x := \sum_{g \in c} u_g$ satisfies $u_g x u_g^* = x$ for all $g \in \Gamma$.  So $x \in \cZ(\rL(G))$ is a witnesses that $\rL(\Gamma)$ is not a factor.

  Assume that every conjugacy class of $\Gamma$ is infinite.  The map $\rL(\Gamma) \ni x \mapsto x \delta_e \in \ltwo(\Gamma)$ is faithful, since $x \delta_g = \rho_{g^{-1}} x \delta_e$ for all $g \in \Gamma$ and the vectors $\delta_g$, $g \in \Gamma$ are a basis of $\ltwo(\Gamma)$.  So if $x \in \cZ(\rL(G))$, it suffices to show that $x \delta_e \in \CC \delta_e$.  We have
  \begin{align*}
    (x \delta_e)(ghg^{-1})
    & =
    \langle x \delta_e, \delta_{g h g^{-1}} \rangle \\
    & =
    \langle x \delta_e , \lambda_G(g) \rho_G(g)  \delta_h \rangle \\
    & =
    \langle \lambda_G(g)^* \rho_G(g)^* x \delta_e  , \delta_h \rangle \\
    & =
    \langle   x \lambda_G(g)^* \rho_G(g)^* \delta_e ,  \delta_h \rangle \\
    & =
    \langle x \delta_e ,  \delta_h \rangle \\
    & =
    (x \delta_e)(h)
    \eqcomma
  \end{align*}
  for all $g,h \in \Gamma$.  Hence $x \delta_e$ is constant on conjugacy classes.  Since $x \delta_e$ is also 2-summable and every non-trivial conjugacy class of $\Gamma$ is infinite, it follows that $x \delta_e \in \CC \delta_e$ indeed.

If $\Gamma$ is a non-trivial icc group, then it is infinite. So $\rL(\Gamma)$ is an infinite dimensional factor.  Since $\Gamma$ is discrete, there is the natural trace on $\rL(\Gamma)$ showing that it is a ${\rm II}_1$ factor.
\end{proof}

\subsubsection{Amenable von Neumann algebras}
\label{sec:amenable-von-neumann-algebras}

A von Neumann algebra $M \subset \bo(H)$ is called injective, if there is some (not necessarily normal) conditional expectation $\rE: \bo(H) \ra M$.  Following the suggestion of Connes \cite{connes78}, we refer to this class of von Neumann algebras as amenable von Neumann algebras.

\begin{proposition}
  \label{prop:amenability-conditional-expectation}
  If $N \subset M$ is an inclusion of von Neumann algebras with conditional expectation and $M$ is amenable, then $N$ is amenable.  In particular, if $M$ is an amenable and finite von Neumann algebra, then every von Neumann subalgebra of $M$ is amenable.
\end{proposition}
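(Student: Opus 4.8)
The proposition is a soft consequence of the definition of amenability via injectivity, resting on the fact that a composition of conditional expectations is again a conditional expectation. The plan is therefore to show first that injectivity is inherited by any von Neumann subalgebra which is the range of a conditional expectation, and then to deduce the second assertion from the classical fact that a von Neumann subalgebra of a finite von Neumann algebra is always the range of a conditional expectation.

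For the first step, realise $N \subseteq M \subseteq \bo(H)$ with $M$ acting faithfully and normally. Amenability of $M$ provides a (not necessarily normal) conditional expectation $\rE \colon \bo(H) \ra M$; let $\rF \colon M \ra N$ be the conditional expectation furnished by the hypothesis. Then $\rF \circ \rE \colon \bo(H) \ra N$ has norm at most $\|\rF\| \, \|\rE\| = 1$ and at least $1$ because $(\rF \circ \rE)(1) = 1$; its range is contained in $\rF(M) = N$ and contains $N$, since $\rE$ and $\rF$ each restrict to the identity on $N$; and it is idempotent, because $\rF(\rE(x)) \in N$ for every $x \in \bo(H)$ while $\rF \circ \rE$ fixes $N$ pointwise. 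So $\rF \circ \rE$ is a norm-one projection of $\bo(H)$ onto $N$, hence (Tomiyama) a conditional expectation, and $N$ is injective, i.e.\ amenable. Note that non-normality of $\rE$ plays no role anywhere in this argument.

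For the ``in particular'', by the first step it suffices to produce a conditional expectation $M \ra N$ whenever $M$ is finite. If $M$ admits a faithful normal tracial state $\tau$ — the situation arising in all later applications — this is Umegaki's expectation, obtained by restricting to $M$ the orthogonal projection of $\Ltwo(M,\tau)$ onto the closure of $N$, and it is a $\tau$-preserving normal conditional expectation. In the general finite case the expectation still exists, either via Dixmier's center-valued trace or via Takesaki's theorem \cite{takesaki71} (as used in the proof of Proposition~\ref{prop:conditional-expectation-open-subgroups}); this is the only non-elementary ingredient, but it is entirely classical. Combined with the first step, this yields the claim. I anticipate no genuine obstacle here: the reduction is bookkeeping with norm-one projections, and the one substantive input — existence of a conditional expectation onto a von Neumann subalgebra of a finite algebra — is standard, the mild point to be careful about being that in the general finite case one cannot simply restrict an arbitrary faithful normal semifinite trace to $N$ and must instead invoke the center-valued trace (or Umegaki in the tracial-state case, which covers all our applications).
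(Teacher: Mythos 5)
Your argument is correct and follows essentially the same route as the paper: the first assertion is obtained by composing the two conditional expectations (the paper says this ``follows on the nose''), and the second rests on the classical existence of a conditional expectation onto any von Neumann subalgebra of a finite von Neumann algebra, which the paper likewise derives from Takesaki's theorem and the triviality of the modular automorphism group of a trace. Your added care about the non-tracial-state finite case (center-valued trace versus Umegaki) is a reasonable elaboration but does not change the substance.
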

\begin{proof}
  From the definitions, the first part of the proposition follows on the nose. 
  We only have to prove that every von Neumann subalgebra of a finite von Neumann algebra admits a conditional expectation.  This follows from Takesaki's theorem \cite{takesaki71} and the fact that the modular automorphism group of a trace is trivial.
\end{proof}

We fix the following important consequence of Proposition \ref{prop:amenability-conditional-expectation}.
\begin{proposition}
  \label{prop:amenability-open-subgroup}
  Let $H \leq G$ be an open subgroup of a locally compact group.  If $\rL(H)$ is non-amenable, then also $\rL(G)$ is non-amenable.
\end{proposition}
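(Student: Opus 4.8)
The plan is to obtain this as an immediate consequence of the two preceding propositions, argued by contraposition. So I would assume that $\rL(G)$ is amenable and deduce that $\rL(H)$ is amenable as well; the stated implication is then just the contrapositive.

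The first step is to invoke Proposition~\ref{prop:conditional-expectation-open-subgroups}: since $H \leq G$ is open, the inclusion $\rL(H) \subset \rL(G)$ genuinely makes sense (it is the injective normal $*$-homomorphism provided by Proposition~\ref{prop:closed-subgroup-vn-alg}) and, crucially, it admits a normal conditional expectation $\rE \colon \rL(G) \to \rL(H)$. The second step is to feed this into Proposition~\ref{prop:amenability-conditional-expectation}, which says that an inclusion of von Neumann algebras equipped with a conditional expectation transports amenability from the ambient algebra to the subalgebra. Chaining the two, amenability of $\rL(G)$ forces amenability of $\rL(H)$, which is exactly what we wanted.

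There is essentially no obstacle remaining at this level; the real content has already been established. The one point worth flagging is that Proposition~\ref{prop:amenability-conditional-expectation} needs an honest conditional expectation onto $\rL(H)$ (normality is not even required), and it is precisely the openness of $H$ in $G$ that guarantees its existence — openness was what allowed $\contc(H) \subset \contc(G)$ in the proof of Proposition~\ref{prop:conditional-expectation-open-subgroups}, hence the semifiniteness of the restricted Plancherel weight and the applicability of Takesaki's theorem. For a general closed but non-open subgroup such a conditional expectation need not exist, so the hypothesis of this proposition cannot be relaxed to arbitrary closed subgroups by this route.
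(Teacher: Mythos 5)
Your proposal is correct and coincides with the paper's own proof: both invoke Proposition~\ref{prop:conditional-expectation-open-subgroups} to get the conditional expectation $\rL(G) \to \rL(H)$ and then apply Proposition~\ref{prop:amenability-conditional-expectation}, the only cosmetic difference being that you phrase it as a contrapositive. Your closing remark on why openness is essential is accurate but not part of the paper's argument.
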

\begin{proof}
  Assume that $\rL(H)$ is non-amenable.  Proposition \ref{prop:conditional-expectation-open-subgroups} tells us that there is a natural embedding $\rL(H) \hra \rL(G)$ with a normal conditional expectation $\rL(G) \ra \rL(H)$.  We can apply Proposition \ref{prop:amenability-conditional-expectation}, in order to conclude that $\rL(G)$ is non-amenable.
\end{proof}

Let $M$ be a ${\rm II}_1$ factor, $k \in \NN_{> 0}$  $p \in \rM_k(\CC) \ot M$ a non-zero projection.  Then $p (\rM_k(\CC) \ot M)p$ is called an amplification of $M$.  Its isomophism class depends only on $t := (\Tr \ot \tau)(p)$, where $\Tr$ denotes the non-normalised trace of $\rM_k(\CC)$ and $\tau$ is the unique trace of $M$.  Hence, we write $M^t$ for this amplification.

  We also need the following simple stability result for amenable ${\rm II}_1$ factors.
\begin{proposition}
  \label{prop:amenable-amplification}
  Let $M$ be a ${\rm II}_1$ factor and $t > 0$.  Then $M$ is amenable if and only if $M^t$ is amenable.
\end{proposition}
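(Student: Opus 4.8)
The plan is to reduce the statement for amplifications to the already-established stability results for von Neumann algebras with conditional expectation (Proposition \ref{prop:amenability-conditional-expectation}). The key observation is that both $M$ and $M^t$ sit inside a common matrix amplification $\rM_k(\CC) \ot M$ in a way that admits conditional expectations in both directions, at least after passing to suitable corners; amenability is insensitive to all of these operations.

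First I would recall that amenability (= injectivity) passes to matrix amplifications in both directions: $M$ is amenable if and only if $\rM_k(\CC) \ot M$ is amenable. Indeed, $M \cong (e_{11} \ot 1)(\rM_k(\CC) \ot M)(e_{11} \ot 1)$ is a corner cut out by a projection, so it inherits amenability by Proposition \ref{prop:amenability-conditional-expectation} via the compression map; conversely $\rM_k(\CC) \ot M = \rM_k(\CC) \ot M$ is amenable whenever $M$ is, since $\rM_k(\CC)$ is finite-dimensional (hence amenable) and the tensor product of amenable von Neumann algebras is amenable — or, more elementarily, one builds a conditional expectation $\bo(\CC^k \ot H) \to \rM_k(\CC) \ot M$ from one $\bo(H) \to M$. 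Next, for the given non-zero projection $p \in \rM_k(\CC) \ot M$, the corner $p(\rM_k(\CC) \ot M)p = M^t$ is again cut out by a projection, so amenability of $\rM_k(\CC) \ot M$ implies amenability of $M^t$ by Proposition \ref{prop:amenability-conditional-expectation}.

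For the converse direction — amenability of $M^t$ implies amenability of $M$ — I would use that in a ${\rm II}_1$ factor any two projections of the same trace are Murray–von Neumann equivalent, and more generally that the amplification $M^t$ does not depend (up to isomorphism) on the choice of $p$. By choosing $k$ large enough and $p$ appropriately (e.g. summing diagonal matrix units and a sub-projection of $M$), one can arrange that $\rM_k(\CC) \ot M$ is itself an amplification of $M^t$, i.e. $\rM_k(\CC) \ot M \cong (M^t)^s$ for a suitable $s$; running the previous paragraph's argument with the roles of $M$ and $M^t$ interchanged then shows amenability of $M^t$ implies amenability of $\rM_k(\CC) \ot M$, hence of its corner $M \cong (e_{11}\ot 1)(\rM_k(\CC)\ot M)(e_{11}\ot 1)$. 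The point is purely formal: amplification is symmetric, $(M^t)^s \cong M^{ts}$, and every von Neumann algebra of the form $\rM_k(\CC) \ot N$ for a ${\rm II}_1$ factor $N$ is an amplification of $N$.

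The only mild subtlety — and the one place to be slightly careful — is the bookkeeping for non-rational $t$: one must know that $M^t$ is well-defined and that the relations $(M^t)^s \cong M^{ts}$ and "$\rM_k(\CC)\ot M$ is an amplification of $M^t$" hold for arbitrary positive reals, not just rationals. This is standard amplification theory for ${\rm II}_1$ factors (using that a ${\rm II}_1$ factor contains projections of every trace in $[0,1]$ and that $\rM_k(\CC) \ot M$ does too), so I would simply cite it rather than reprove it. Granting that, the proposition is immediate from Proposition \ref{prop:amenability-conditional-expectation} applied to corner inclusions in both directions.
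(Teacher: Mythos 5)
Your proposal is correct and follows essentially the same route as the paper: amenability passes to matrix amplifications via a conditional expectation $\id\ot\rE$, passes to corners via the compression map $x\mapsto pxp$ (onto $pNp\oplus\CC p^{\perp}$, then restrict) together with Proposition \ref{prop:amenability-conditional-expectation}, and the converse direction is the formal identity $M=(M^t)^{1/t}$. The paper's converse is just this one line rather than your longer detour through exhibiting $\rM_k(\CC)\ot M$ as an amplification of $M^t$, but the content is identical.
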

\begin{proof}
  Fix an amenable von Neumann algebra $M \subset \bo(H)$ and a conditional expectation $\rE: \bo(H) \ra M$.  Then $\id \ot \rE: \bo(K) \vnt \bo(H) \ra \bo(K) \vnt M$ is a conditional expectation witnessing amenability of $\bo(K) \vnt M$.  If $p \in M$ is a non-zero projection and $p^\perp = 1 -p$ is its orthogonal complement, then $M \ni x \mapsto pxp \in pMp \oplus \CC p^\perp  $ is a conditional expectation.  So Proposition \ref{prop:amenability-conditional-expectation} implies amenability of $pMp \oplus \CC p^\perp$ and hence of $pMp$ .  These arguments show that every amplification of $M$ is amenable.  Further, $M = (M^t)^{1/t}$, so that the proposition follows.
\end{proof}

The next theorem is classic and a proof can be found in Theorem 2.5.8 of \cite{brownozawa08}.
\begin{theorem}
  \label{thm:amenable-group-vn-algebra-discrete-group}
  Let $\Gamma$ be a discrete group.  Then $\rL(\Gamma)$ is amenable if and only if $\Gamma$ is amenable.
\end{theorem}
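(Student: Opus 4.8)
The plan is to prove the two implications separately. Throughout I would write $\tau$ for the canonical normal trace on $\rL(\Gamma)$ determined by $\tau(u_g) = \delta_{e,g}$, equivalently $\tau(x) = \langle x \delta_e, \delta_e \rangle$, and use the classical commutation theorem for discrete groups, namely that inside $\bo(\ltwo(\Gamma))$ the commutant of the right regular representation satisfies $\rho_\Gamma(\Gamma)' = \rL(\Gamma)$.

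For the implication ``$\Gamma$ amenable $\Rightarrow \rL(\Gamma)$ amenable'' I would fix a left-invariant mean $m$ on $\ell^\infty(\Gamma)$ and use it to average the conjugation action $T \mapsto \rho_\Gamma(g) T \rho_\Gamma(g)^*$ of $\Gamma$ on $\bo(\ltwo(\Gamma))$. Since $\bo(\ltwo(\Gamma))$ is the dual of the trace class operators, for fixed $T \in \bo(\ltwo(\Gamma))$ and any trace class $\omega$ the function $g \mapsto \Tr(\rho_\Gamma(g) T \rho_\Gamma(g)^* \omega)$ lies in $\ell^\infty(\Gamma)$; applying $m$ to it defines a bounded linear functional on the trace class operators, hence an operator $\rE(T) \in \bo(\ltwo(\Gamma))$ with $\|\rE(T)\| \leq \|T\|$. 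Positivity and normalisation of $m$ make $\rE$ positive and unital; left invariance of $m$ forces $\rE(T)$ to commute with every $\rho_\Gamma(g)$, so that $\rE(T) \in \rho_\Gamma(\Gamma)' = \rL(\Gamma)$; and if $T \in \rL(\Gamma)$ the net being averaged is constantly $T$, whence $\rE(T) = T$. Thus $\rE : \bo(\ltwo(\Gamma)) \ra \rL(\Gamma)$ is an idempotent of norm one with range $\rL(\Gamma)$, i.e. a conditional expectation, and $\rL(\Gamma)$ is amenable.

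For the converse ``$\rL(\Gamma)$ amenable $\Rightarrow \Gamma$ amenable'' I would take a conditional expectation $\rE : \bo(\ltwo(\Gamma)) \ra \rL(\Gamma)$ and set $\psi := \tau \circ \rE$, a state on $\bo(\ltwo(\Gamma))$. Embedding $\ell^\infty(\Gamma)$ into $\bo(\ltwo(\Gamma))$ as multiplication operators $M_f$ and putting $m(f) := \psi(M_f)$ gives a mean on $\ell^\infty(\Gamma)$, being the restriction of a state to a unital \Cstar-subalgebra. The one computation needed is that conjugation by $u_g = \lambda_\Gamma(g)$ carries $M_f$ to $M_{g \cdot f}$, where $(g\cdot f)(x) = f(g^{-1}x)$; since $u_g \in \rL(\Gamma)$, the bimodularity of $\rE$ and traciality of $\tau$ then give $m(g\cdot f) = \tau(u_g \rE(M_f) u_g^*) = \tau(\rE(M_f)) = m(f)$. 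Hence $m$ is left invariant and $\Gamma$ is amenable.

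I would invoke without further comment the commutation theorem $\rho_\Gamma(\Gamma)' = \rL(\Gamma)$, the identification of $\bo(\ltwo(\Gamma))$ with the dual space of the trace class operators (which is what legitimises the weak-$*$ averaging), and the bimodularity of conditional expectations recorded in Section \ref{sec:von-neumann-algebras}. The main point requiring care is the ``if'' direction: one must check that the weak-$*$ integral against $m$ is well defined and that the averaged operator genuinely lands in $\rho_\Gamma(\Gamma)'$. This is the classical averaging argument and needs nothing beyond left invariance of $m$, so I do not anticipate a serious obstacle.
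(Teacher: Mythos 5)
Your proof is correct and is precisely the classical averaging argument: the paper itself gives no proof of this theorem, referring instead to Theorem 2.5.8 of Brown--Ozawa, and the argument found there is essentially the one you give (averaging the conjugation action of $\rho_\Gamma(\Gamma)$ against an invariant mean via the duality of $\bo(\ltwo(\Gamma))$ with the trace class operators for one direction, and restricting $\tau \circ \rE$ to $\ell^\infty(\Gamma)$ acting by multiplication operators for the other). Both implications as you have written them are sound, relying only on the commutation theorem $\rho_\Gamma(\Gamma)' = \rL(\Gamma)$ and the bimodularity of conditional expectations already recorded in the paper's preliminaries.
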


\subsubsection{Free group factors and non-amenable free products of von Neumann algebras}
\label{sec:free-group-factors}

Let $M_1, M_2$ be von Neumann algebras with fixed faithful normal states $\vphi_i \in M_i^*$.  The free product von Neumann algebra $(M_1, \vphi_1) * (M_2, \vphi_2)$ is described in Chapters 1.6 and 2.5 of \cite{voiculescudykemanica92}.  It is the unique von Neumann algebra $M$ generated by isomorphic copies of $M_1$ and $M_2$ together with a normal state $\vphi$ on $M$ satisfying the freeness condition $\vphi(x_1 \dotsm x_n) = 0$ for all $x_1, \dotsc, x_n \in M_1 \cup M_2$ satisfying $x_i \in M_{j_i}$, $\vphi_{j_i}(x_i) = 0$ with $j_i \neq j_{i + 1}$ for $i \in \{1, \dotsc, n - 1\}$.  If no confusion is possible, we write $M = M_1 * M_2$ for the free product von Neumann algebra.

In this section, we briefly explain the following result due to Dykema.
\begin{theorem}[See Theorem 4.6 of \cite{dykema93-hyperfinite}]
  \label{thm:dykema-non-amenable}
  Let $M, N$ be hyperfinite tracial von Neumann algebras such that $\dim M, \dim N \geq 2$ and $\dim M + \dim N \geq 5$.  Then $M * N$ is a non-amenable von Neumann algebra.
\end{theorem}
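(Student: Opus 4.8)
The plan is to produce, inside $M*N$, a copy of some non-amenable von Neumann algebra whose non-amenability we can certify independently, and then use the fact that hereditary subalgebras with conditional expectation inherit amenability (Proposition~\ref{prop:amenability-conditional-expectation}) in its contrapositive form. Since $M$ and $N$ are tracial, $M*N$ carries the free-product trace and is a finite von Neumann algebra, so by Proposition~\ref{prop:amenability-conditional-expectation} \emph{every} von Neumann subalgebra of $M*N$ admits a conditional expectation; thus it suffices to exhibit \emph{one} non-amenable subalgebra. The natural candidate comes from projections: because $\dim M,\dim N\geq 2$ we may pick non-trivial projections $p\in M$, $q\in N$ with traces $\alpha:=\tau(p)$ and $\beta:=\tau(q)$ in $(0,1)$, and the two-projection pair $\{p,q\}$ generates a copy of a free product of two two-dimensional abelian algebras, $(\CC^2,\vphi_1)*(\CC^2,\vphi_2)$, sitting inside $M*N$.

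The key computation is then to identify this small free product. This is Dykema's analysis (and also classical work on two subfactors / the group $\ZZ/2 * \ZZ/2$): the von Neumann algebra generated by two projections decomposes, via the spectral theorem applied to $pqp$, as a direct sum of an abelian ``atomic'' part and a part of the form $L^\infty([a,b])\otimes \mat{2}{\CC}$ on which the generic $2\times2$ structure lives. Freeness of $p$ and $q$ with respect to the trace forces the spectral distribution of $pqp$ to be (a dilate of) the free multiplicative convolution of the two Bernoulli distributions $\alpha\delta_1+(1-\alpha)\delta_0$ and $\beta\delta_1+(1-\beta)\delta_0$; this distribution is non-atomic on an interval of positive length precisely under the hypothesis $\dim M+\dim N\geq 5$, i.e. when $(\dim M,\dim N)\neq(2,2)$. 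Hence the generated algebra contains a copy of $L^\infty[a,b]\,\vnt\,\mat{2}{\CC}$ with $a<b$, in particular a subalgebra isomorphic to $\mat{2}{\CC}\otimes L^\infty[a,b]$ — this already contains a II$_1$ factor after cutting down, but more directly it is a non-type-I, and we must still certify non-amenability.

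To certify non-amenability I would instead keep track of a \emph{group} inside the picture: $(\CC^2,\vphi_1)*(\CC^2,\vphi_2)$ with the traces being the uniform states is exactly the group von Neumann algebra $\rL(\ZZ/2*\ZZ/2)\cong\rL(D_\infty)$, which is amenable — so the uniform-weight case is \emph{not} enough, and this is the crux: the hypotheses $\dim M+\dim N\geq 5$ are there precisely to push the state off the uniform one. When the traces are non-uniform, the free product $(\CC^2,\vphi_1)*(\CC^2,\vphi_2)$ is, by Dykema's computation, not $\rL(D_\infty)$ but has a direct summand which is an amplification of the free group factor $\rL(\freegrp{2})$ (concretely $\rL(\freegrp{2})^t$ for an explicit $t$ depending on $\alpha,\beta$), or more generally an interpolated free group factor; by Proposition~\ref{prop:amenable-amplification} amenability of such a summand would force $\rL(\freegrp{2})$ to be amenable, contradicting Theorem~\ref{thm:amenable-group-vn-algebra-discrete-group} since $\freegrp{2}$ is non-amenable. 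Compressing $M*N$ by the central projection of this summand gives a conditional expectation onto a non-amenable algebra, so $M*N$ is non-amenable by Proposition~\ref{prop:amenability-conditional-expectation}.

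\textbf{Main obstacle.} The serious work is the structural identification of $(\CC^2,\vphi_1)*(\CC^2,\vphi_2)$ — computing the distribution of $pqp$ via free multiplicative convolution, locating its non-atomic interval under the stated dimension inequality, and recognizing the resulting $L^\infty\,\vnt\,\mat{2}{\CC}$ summand as an amplified free group factor. This is genuinely Dykema's theorem and I would cite \cite{dykema93-hyperfinite} (and \cite{voiculescudykemanica92} for the free-convolution input) rather than reprove it; once that identification is in hand, the passage to the general hyperfinite $M,N$ is the easy part — just cut down by $p\in M$, $q\in N$, invoke finiteness of the ambient trace to get the conditional expectation, and quote Propositions~\ref{prop:amenability-conditional-expectation} and~\ref{prop:amenable-amplification} together with Theorem~\ref{thm:amenable-group-vn-algebra-discrete-group}.
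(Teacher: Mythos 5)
There is a genuine gap, and it is exactly at the point you flag as the crux. The von Neumann algebra generated by two free projections, i.e.\ $(\CC^2,\vphi_1)*(\CC^2,\vphi_2)$, is \emph{always} amenable, for every choice of faithful states $\vphi_1,\vphi_2$ --- uniform or not. Indeed, the algebra generated by any two projections embeds into a direct sum of abelian pieces and $\Linfty([a,b])\,\vnt\,\Mat{2}{\CC}$, hence is of type ${\rm I}$ (in particular your parenthetical claim that the $\Linfty\,\vnt\,\Mat{2}{\CC}$ part ``contains a ${\rm II}_1$ factor after cutting down'' is false: any von Neumann subalgebra of $\Linfty(X)\,\vnt\,\Mat{2}{\CC}$ satisfies a polynomial identity and is type ${\rm I}_{\leq 2}$). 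Dykema's computation of $\CC^2 * \CC^2$ confirms this: for arbitrary faithful traces one gets at most two atoms plus a diffuse type ${\rm I}$ summand, never an interpolated free group factor. The hypothesis $\dim M+\dim N\geq 5$ is therefore not there to ``push the state off the uniform one''; it is there to exclude the case $M\cong N\cong\CC^2$ altogether, because that free product is hyperfinite no matter what the states are. Consequently your reduction to a single pair of projections $p\in M$, $q\in N$ can never certify non-amenability, and no choice of $\alpha,\beta$ repairs it. (As a side remark, the spectral distribution of $pqp$ has a nontrivial absolutely continuous part for all $\alpha,\beta\in(0,1)$, so its non-atomicity is also not governed by $\dim M+\dim N\geq 5$.)

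To make the argument work one must genuinely use that at least one of the two algebras has dimension $\geq 3$ (or is diffuse), i.e.\ cut down to something at least as large as $\CC^3 * \CC^2$, and even then the identification of an interpolated free group factor direct summand is the substance of Dykema's Theorem 4.6. This is in fact all the paper does: it quotes that theorem in the form ``a direct summand of $M*N$ is an interpolated free group factor'' (Theorem~\ref{thm:dykema}) and combines it with Proposition~\ref{prop:interpolated-free-group-factor-non-amenable}, which derives non-amenability of $\rL(\freegrp{t})$ from non-amenability of $\rL(\freegrp{2})$ (Theorem~\ref{thm:amenable-group-vn-algebra-discrete-group}) and stability of amenability under amplification (Proposition~\ref{prop:amenable-amplification}), then passes through the conditional expectation onto the summand via Proposition~\ref{prop:amenability-conditional-expectation}. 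Your final paragraph is in the same spirit as this, but the route you propose for locating the free group factor --- through two projections --- is the one place where the dimension hypothesis actually bites, and it does not go through.
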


Let $\freegrp{n}$ denote some non-abelian free group.  Then $\rL(\freegrp{n})$ is called a free group factor.  For any $k \in \NN_{> 0}$ and any non-zero projection $p \in \rM_k(\CC) \ot \rL(\freegrp{n})$, the compression $p (\rM_k(\CC) \ot \rL(\freegrp{n})) p$ is called an interpolated free group factor.  These von Neumann algebras were introduced independently in \cite{dykema94-interpolated} and \cite{radulescu94-interpolated}, where among other things it was proven that the isomorphism class of $p (\rM_k(\CC) \ot \rL(\freegrp{n})) p$ only depends on $t := \frac{n - 1}{(\Tr \ot \tau)(p)^2} + 1$, where $\Tr$ denotes the non-normalised trace on $\rM_k(\CC)$ and $\tau$ is the canonical trace on $\rL(\freegrp{n})$.  We hence write $\rL(\freegrp{t})$ for this von Neumann algebra.

\begin{proposition}
  \label{prop:interpolated-free-group-factor-non-amenable}
  Interpolated free group factors $\rL(\freegrp{t})$, $t > 1$ are non-amenable.
\end{proposition}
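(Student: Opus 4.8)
The plan is to realise every interpolated free group factor $\rL(\freegrp{t})$ with $t > 1$ as an amplification of the single free group factor $\rL(\freegrp{2})$, and then to reduce the statement to the non-amenability of $\rL(\freegrp{2})$ via Proposition~\ref{prop:amenable-amplification}.

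First I would unwind the parametrisation of interpolated free group factors recalled above: for an integer $n \geq 2$, an integer $k$, and a non-zero projection $p \in \rM_k(\CC) \ot \rL(\freegrp{n})$ one has $p(\rM_k(\CC) \ot \rL(\freegrp{n}))p \cong \rL(\freegrp{t})$ with $t = \frac{n - 1}{s^2} + 1$, where $s := (\Tr \ot \tau)(p)$; in the amplification notation this reads $\rL(\freegrp{n})^s \cong \rL(\freegrp{t})$. Fixing $n = 2$, the function $s \mapsto s^{-2} + 1$ is a continuous strictly decreasing bijection from $(0, \infty)$ onto $(1, \infty)$, so for any given $t > 1$ one may take $s := (t - 1)^{-1/2} \in (0, \infty)$ and obtain $\rL(\freegrp{t}) \cong \rL(\freegrp{2})^s$.

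Next I would record that $\rL(\freegrp{2})$ is a non-amenable ${\rm II}_1$ factor. It is a ${\rm II}_1$ factor by Proposition~\ref{prop:group-factor}, since $\freegrp{2}$ is a non-trivial icc group: the centraliser in a free group of any non-identity element is infinite cyclic, hence of infinite index in the non-cyclic group $\freegrp{2}$, so every non-trivial conjugacy class is infinite. For non-amenability I would use either of two routes: directly, $\freegrp{2}$ is a non-amenable discrete group, so $\rL(\freegrp{2})$ is non-amenable by Theorem~\ref{thm:amenable-group-vn-algebra-discrete-group}; or, alternatively, $\rL(\freegrp{2}) \cong \rL(\ZZ) * \rL(\ZZ)$ is a free product of two abelian (hence hyperfinite) tracial von Neumann algebras, each of dimension at least $2$ and of dimension sum at least $5$, so $\rL(\freegrp{2})$ is non-amenable by Dykema's Theorem~\ref{thm:dykema-non-amenable}.

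Finally, applying Proposition~\ref{prop:amenable-amplification} to the ${\rm II}_1$ factor $\rL(\freegrp{2})$ with parameter $s = (t - 1)^{-1/2}$, amenability of $\rL(\freegrp{t}) \cong \rL(\freegrp{2})^s$ would be equivalent to amenability of $\rL(\freegrp{2})$, which fails; hence $\rL(\freegrp{t})$ is non-amenable for every $t > 1$. I expect no genuine obstacle here: the analytic substance is entirely absorbed into the cited results of Dykema and into the construction of interpolated free group factors, and the only point to verify by hand is the trivial observation that, with $n = 2$, every $t > 1$ arises as an amplification parameter of $\rL(\freegrp{2})$.
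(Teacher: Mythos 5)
Your proof is correct and follows essentially the same route as the paper: realise $\rL(\freegrp{t})$ as the amplification $\rL(\freegrp{2})^{\sqrt{1/(t-1)}}$, note via Proposition~\ref{prop:group-factor} and Theorem~\ref{thm:amenable-group-vn-algebra-discrete-group} that $\rL(\freegrp{2})$ is a non-amenable ${\rm II}_1$ factor, and conclude with Proposition~\ref{prop:amenable-amplification}. The extra details you supply (the icc verification and the alternative appeal to Dykema's theorem) are fine but not needed beyond what the paper already cites.
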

\begin{proof}
  Let $t > 1$ be real.  By Proposition \ref{prop:group-factor} and Theorem \ref{thm:amenable-group-vn-algebra-discrete-group} we know that $\rL(\freegrp{n})$ is a non-amenable ${\rm II}_1$ factor.  So Proposition \ref{prop:amenable-amplification} shows that $\rL(\freegrp{t}) = \rL(\freegrp{2})^{\sqrt\frac{1}{t-1}}$ is non-amenable.
\end{proof}

Now Theorem \ref{thm:dykema-non-amenable} is a consequence of the following result, which is stated explicitly in the literature.
\begin{theorem}[See Theorem 4.6 of \cite{dykema93-hyperfinite}]
  \label{thm:dykema}
  Let $M, N$ be hyperfinite tracial von Neumann algebras such that $\dim M, \dim N \geq 2$ and $\dim M + \dim N \geq 5$.  Then a direct summand of $M * N$ is isomorphic to some interpolated free group factor.
\end{theorem}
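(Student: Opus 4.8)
The plan is to follow Dykema's argument, which splits into two stages; here is the first. Assume $M$ and $N$ finite dimensional, say $M = \bigoplus_{i=1}^{p} \rM_{k_i}(\CC)$ and $N = \bigoplus_{j=1}^{q} \rM_{l_j}(\CC)$, the distinguished traces assigning weights to the minimal central projections. One computes $M * N$ by combining Voiculescu's free probability calculus with three families of base computations. The first is the von Neumann algebra generated by two free projections, that is, the case $M = N = \CC^2$ with arbitrary weights: this is always a direct sum of a diffuse abelian algebra with at most two atoms, hence hyperfinite, which is exactly why the excluded case $\dim M = \dim N = 2$ produces no free group factor. The second consists of the fundamental computations of a few small nondegenerate free products, such as $\CC^2 * \CC^3$ and $\rM_2(\CC) * \CC^2$, which Dykema shows are, up to a finite dimensional direct summand, interpolated free group factors $\rL(\freegrp{t})$ with $t > 1$. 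The third consists of the absorption and compression formulas: $\rL(\freegrp{s}) * Q \cong \rL(\freegrp{s'}) \oplus (\text{finite dimensional})$ for $Q$ finite dimensional with a trace, $\rL(\freegrp{s}) * Q \cong \rL(\freegrp{s'})$ for $Q$ diffuse hyperfinite, $\rL(\freegrp{s}) * \rL(\freegrp{s'}) \cong \rL(\freegrp{s + s'})$, and $z \rL(\freegrp{s}) z \cong \rL(\freegrp{s''})$ for a nonzero projection $z$, with the new parameters determined by $s$ together with the data of $Q$ or $z$.

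Granting these, I would argue by induction, peeling off the minimal central blocks of $M$ and $N$ one at a time. Since $\dim M + \dim N \geq 5$ and $\dim M, \dim N \geq 2$, one of the two, say $N$, has a central summand which is a matrix block of size at least two or a sum of at least three one dimensional blocks; isolating it and cutting by suitable corners produces a subconfiguration of the second family, so that $M * N$ already has a direct summand isomorphic to $\rL(\freegrp{t_0})$ with $t_0 > 1$. Feeding the remaining central blocks of $M$ and $N$ one at a time into the absorption formulas of the third family, each step merges the new block into the free group factor summand, enlarging its parameter, while only enlarging a finite dimensional remainder. In the end $M * N \cong \rL(\freegrp{t}) \oplus F$ with $F$ finite dimensional and $t > 1$. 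The bookkeeping of all parameters is organised by Dykema's free dimension invariant, which is additive under free products and equals $t$ on $\rL(\freegrp{t})$; the hypotheses force the parameter on the factor summand to exceed $1$ via the corresponding free dimension identity.

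For the general case, write $M = \overline{\bigcup_n M_n}$ and $N = \overline{\bigcup_n N_n}$ as increasing unions of finite dimensional unital subalgebras, chosen with $\dim M_n, \dim N_n \geq 2$ and $\dim M_n + \dim N_n \geq 5$ for all $n$, which is possible by the hypotheses. Then $M * N = \overline{\bigcup_n (M_n * N_n)}$, and by the finite dimensional case each $M_n * N_n$ carries a central projection $z_n$ with $z_n (M_n * N_n) \cong \rL(\freegrp{t_n})$, $t_n > 1$. Arranging the inclusions so that these corners are nested, one gets $z_n \nearrow z$ for a central projection $z \in M * N$, and then $z(M * N)$ is an increasing union of interpolated free group factors, hence itself an interpolated free group factor $\rL(\freegrp{t})$ with $t = \sup_n t_n > 1$; this last step uses once more the compression formula and $\rL(\freegrp{s}) * \rL(\freegrp{s'}) \cong \rL(\freegrp{s + s'})$. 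Thus $z(M * N)$ is the desired direct summand.

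The hard part is the finite dimensional stage, and within it the base computations of the second family: that $\CC^2 * \CC^3$ and $\rM_2(\CC) * \CC^2$ are, up to a finite dimensional summand, interpolated free group factors. These rest on Voiculescu's asymptotic freeness of random matrices (equivalently, on a delicate analysis of the relevant free convolutions and free compressions), and it is the determination of the exact free group parameter, rather than mere non-amenability, that makes the argument technically heavy. By comparison, the absorption formulas of the third family and the approximation argument are comparatively routine once these base cases are in hand.
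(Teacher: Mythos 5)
First, a point of comparison: the paper does not prove this theorem. It is imported verbatim as Theorem 4.6 of Dykema's paper on free products of hyperfinite von Neumann algebras and used as a black box, so there is no internal proof to measure your attempt against. What you have written is a faithful outline of Dykema's own argument --- exhaustion of $M$ and $N$ by finite-dimensional subalgebras, base computations for small free products, absorption and compression formulas governed by the free dimension invariant --- so as a reconstruction of the cited proof's architecture it is accurate.

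As a proof, however, it is a table of contents rather than an argument. Everything that carries weight is named and deferred: the identification of $\CC^2 * \CC^3$ and $\rM_2(\CC) * \CC^2$ with interpolated free group factors (up to finite-dimensional summands), the absorption formulas, and the additivity of free dimension are exactly the content of Dykema's paper and of his earlier matrix-model computations, not things you derive. Two concrete slips deserve mention. The algebra generated by two free projections is not ``a direct sum of a diffuse abelian algebra with at most two atoms'': away from its atoms it has the form $\rM_2(A)$ with $A$ diffuse abelian (for equal traces it is $\rL(\rD_\infty)$, which is type ${\rm I}$ but not abelian); the relevant point for the exclusion of the case $\dim M = \dim N = 2$ is only that this algebra is hyperfinite. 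More seriously, the inductive-limit step in the infinite-dimensional case is not automatic: an increasing union of factors $\rL(\freegrp{t_n})$ is $\rL(\freegrp{\sup_n t_n})$ only when the connecting inclusions are standard embeddings in Dykema's sense, and verifying that the corners $z_n(M_n * N_n)$ sit inside one another standardly is a genuine piece of the proof that your sketch passes over in one clause. So the skeleton is right, but none of the load-bearing steps are established; the proposal could not replace the citation.
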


\subsubsection{Amalgamated free product von Neumann algebras}
\label{sec:amalgamated-free-product-von-neumann-algebras}

If $N \subset M$ is an inclusion of von Neumann algebras with a normal faithful conditional expectation $\rE:M \ra N$, we write $M \ominus N = \{x \in M \amid \rE(x) = 0\}$.  Given two von Neumann algebras $M_1, M_2$ with a common von Neumann subalgebra $N$ and normal faithful conditional expectations $\rE_i: M_i \ra N$, there is an amalgamated free product von Neumann algebra $(M_1, \rE_1) *_N (M_2, \rE_2)$ described in Chapter 3.8 of \cite{voiculescudykemanica92}.  It is the unique von Neumann algebra $M$ generated by isomorphic copies of $M_1$ and $M_2$ such that $M_1 \cap M_2 = N$ in $M$ as well as a normal conditional expectation $\rE:M \ra N$ obeying the freeness condition $\rE(x_1 \dotsm x_n) = 0$ for all elements $x_1, \dotsc, x_n \in M_1 \cup M_2$ with $x_i \in M_{j_i} \ominus N$ and $j_i \neq j_{i + 1}$ for all $i \in \{1, \dotsc, n - 1\}$.  Compare with Proposition 2.5 in \cite{ueda99}.

\begin{proposition}
  \label{prop:amalgamation-group-to-vn-algebra}
  Let $G = G_1 *_H G_2$ be a locally compact amalgamated free product.  Then the inclusions $\rL(G_1), \rL(G_2) \subset \rL(G)$ induce an isomorphism $\rL(G) \cong \rL(G_1) *_{\rL(H)} \rL(G_2)$ where the amalgamated free product is taken with respect to the natural conditional expectations.
\end{proposition}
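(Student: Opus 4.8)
The plan is to verify directly the characterisation of the amalgamated free product von Neumann algebra recalled just above: I shall exhibit $\rL(G)$ as being generated by copies of $\rL(G_1)$ and $\rL(G_2)$ which intersect in $\rL(H)$, together with a normal conditional expectation onto $\rL(H)$ obeying the freeness condition, all compatible with the natural conditional expectations $\rE_i\colon\rL(G_i)\to\rL(H)$; the uniqueness part of that characterisation (Proposition 2.5 of \cite{ueda99}) then yields the asserted isomorphism.

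First I would set up the maps. Since $H$ is open in $G_1$ and $G_2$, which are in turn open in $G$, all of $H,G_1,G_2$ are open in $G$, so Propositions \ref{prop:closed-subgroup-vn-alg} and \ref{prop:conditional-expectation-open-subgroups} provide injective normal embeddings $\rL(H),\rL(G_1),\rL(G_2)\hookrightarrow\rL(G)$ (all restrictions of $\lambda_G$) together with normal conditional expectations $\rE\colon\rL(G)\to\rL(H)$ and $\rE_i\colon\rL(G_i)\to\rL(H)$, each characterised by $u_g\mapsto\mathbb 1_H(g)u_g$ and each satisfying $\vphi\circ\rE=\vphi$ for the relevant Plancherel weight, hence faithful. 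As $\rE|_{\rL(G_i)}$ and $\rE_i$ are both normal and agree on the $\sigma$-weakly dense subalgebra $\CC G_i$, they coincide; in particular $\rL(H)\subset\rL(G_1)\cap\rL(G_2)$ and the embeddings are compatible with the $\rL(H)$-structure. Generation is immediate: $G$ is generated by $G_1\cup G_2$, so $\CC G$ lies in the algebra generated by $\rL(G_1)\cup\rL(G_2)$, whence $\rL(G)=\{u_g:g\in G\}''$ is generated by $\rL(G_1)$ and $\rL(G_2)$.

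The core of the argument is the freeness condition $\rE(x_1\cdots x_n)=0$ for $x_i\in\rL(G_{j_i})\ominus\rL(H)$ with $j_i\neq j_{i+1}$. Here $\rL(G_j)\ominus\rL(H)=\ker\rE_j$, and I claim this equals the $\sigma$-weakly closed linear span of $\{u_g:g\in G_j\setminus H\}$: each such $u_g$ lies in $\ker\rE_j$, which is a $\sigma$-weakly closed subspace, and conversely applying the normal map $\mathrm{id}-\rE_j$ to a net in $\CC G_j$ converging $\sigma$-weakly to some $x$ with $\rE_j(x)=0$ yields a net in $\lspan\{u_g:g\in G_j\setminus H\}$ converging to $x$. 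By multilinearity it then suffices to prove $\rE(u_{g_1}\cdots u_{g_n})=0$ for $g_i\in G_{j_i}\setminus H$ with $j_i\neq j_{i+1}$, and afterwards to propagate this to arbitrary $x_i\in\rL(G_{j_i})\ominus\rL(H)$ one variable at a time, using that multiplication in $\rL(G)$ is separately $\sigma$-weakly continuous and $\rE$ is normal. Finally $u_{g_1}\cdots u_{g_n}=u_{g_1\cdots g_n}$, and the normal form theorem for amalgamated free products (\cite{serre80}) shows that such a reduced word lies neither in $G_1$ nor in $G_2$ when $n\geq 2$, while $g_1\notin H$ when $n=1$; in either case $g_1\cdots g_n\notin H$, so $\rE(u_{g_1\cdots g_n})=\mathbb 1_H(g_1\cdots g_n)u_{g_1\cdots g_n}=0$.

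It remains to show $\rL(G_1)\cap\rL(G_2)=\rL(H)$. For $x$ in the intersection, put $z:=\rE(x)=\rE_1(x)=\rE_2(x)\in\rL(H)$ and $y:=x-z$, so that $y^*\in\rL(G_1)\ominus\rL(H)$ while $y\in\rL(G_2)\ominus\rL(H)$; the freeness condition just established gives $\rE(y^*y)=0$, and then $\vphi(y^*y)=\vphi(\rE(y^*y))=0$ with $\vphi$ faithful forces $y=0$, i.e.\ $x=z\in\rL(H)$. Together with generation and the freeness condition, this matches $\rL(G)$ with the defining properties of $(\rL(G_1),\rE_1)*_{\rL(H)}(\rL(G_2),\rE_2)$, and uniqueness concludes. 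I expect the freeness verification — precisely the passage from group elements to arbitrary elements of $\rL(G_{j_i})\ominus\rL(H)$, which relies on the $\sigma$-weak density computation and separate $\sigma$-weak continuity of multiplication — to be the one step requiring genuine care; the normal form input is classical and everything else is formal.
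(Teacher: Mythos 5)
Your proof is correct and follows essentially the same route as the paper: reduce the freeness condition to reduced words of group elements via the normal form theorem (so that $g_1\dotsm g_n\notin H$ and hence $\rE(u_{g_1\dotsm g_n})=0$), extend to general elements of $\rL(G_{j_i})\ominus\rL(H)$ by density and normality of $\rE$, and invoke Ueda's Proposition 2.5. The only (harmless) differences are that the paper performs the density extension via Kaplansky's theorem and joint strong continuity of multiplication on bounded nets, whereas you use separate $\sigma$-weak continuity one variable at a time, and that you verify the generation and intersection conditions explicitly rather than leaving them to Ueda's criterion.
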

\begin{proof}
Denote by $\rE: \rL(G) \ra \rL(H)$ the normal conditional expectation associated by Proposition \ref{prop:conditional-expectation-open-subgroups} with the open subgroup $H \leq G$.  It satisfies $\rE(u_g) = \mathbb 1_{H}(g) u_g$ for all $g \in G$.  Denote by $\rE_j: \rL(G) \ra \rL(G_j)$ the natural conditional expectations for $j \in \{1, 2\}$.

  We want to apply Proposition 2.5 of \cite{ueda99} to conclude the proof.  In order to do so we only need to verify the freeness condition for $\rL(G_j) \subset \rL(G)$ with respect to $\rE$.  Note that if $g_1, \dotsc, g_n \in G_1 \cup G_2$ with $g_i \in G_{j_i} \setminus H$ and $j_i \neq j_{i+1}$ for all $i \in \{j_1, \dotsc, j_{n-1}\}$, then $g_1 \dotsm g_n \in G \setminus H$.  This implies $\rE(u_{g_1} \dotsm u_{g_n} ) = 0$.

Let $x_1, \dotsc, x_n \in \rL(G_1) \cup \rL(G_2)$ with $x_i \in \rL(G_{j_i}) \ominus \rL(H)$ and $j_i \neq j_{i +1}$ for all $i \in \{1, \dotsc, n -1\}$.  Since $\CC G_j \subset \rL(G_j)$ is strongly dense for $j \in \{1, 2\}$, Kaplansky's density theorem provides us with bounded nets $(x_{\alpha, i})_\alpha$ in $\CC G_{j_i}$ for all $i \in \{1, \dotsc, n\}$ such that $x_{\alpha, i} \stackrel{\alpha \ra \infty}{\ra} x_i$ strongly.  Write $x_{\alpha, i} = \sum_{g \in G_{i_j}} c_{g, \alpha, i} u_g$.  Since $\rE_{j_i}(x_i) = 0$, we have $y_{\alpha, i} := x_{\alpha, i} - \rE_{j_i}(x_{\alpha, i}) \ra x_i$ strongly.  Since $(y_{\alpha, i})_{\alpha}$ is a bounded net, we also obtain $y_{\alpha, 1} \dotsm y_{\alpha, n} \ra x_1 \dotsm x_n$ strongly and hence also $\sigma$-weakly.  We have $y_{\alpha, i} = \sum_{g \in G_{j_i} \setminus H} c_{g, \alpha, i} u_g$, so that $\rE(y_{\alpha, 1} \dotsm y_{\alpha, n}) = 0$ for all $\alpha$ by our initial remark on $\rE$.  It follows that $\rE(x_1 \dotsm x_n) = 0$ by normality of $\rE$.  
\end{proof}

\section{Basic non-amenability results for group von Neumann algebras}
\label{sec:basic-non-amenability-results}

In this section we provide the basic non-amenability results for group von Neumann algebras, which are going to be used in Section \ref{sec:proper-actions}.  By means of Bass-Serre theory, all non-amenability questions we face, can be answered with the next Lemmas \ref{lem:non-amenability-double-cosets} and \ref{lem:non-amenability-graphs-of-groups}.

\begin{lemma}
  \label{lem:non-amenability-double-cosets}
  Let $K \leq G, H$ be two locally compact groups with a common compact open subgroup.  If $|K \bs G /K| \geq 3$ and $K \leq H$ is a proper subgroup, then $\rL(G *_K H)$ is non-amenable.
\end{lemma}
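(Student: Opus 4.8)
The plan is to strip off the amalgam by compressing with the averaging projection $p = p_K$ of the common compact open subgroup $K$, thereby turning $\rL(G*_KH)$ into a \emph{plain} free product of Hecke von Neumann algebras, to which Dykema's Theorem~\ref{thm:dykema-non-amenable} applies.

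First, by Proposition~\ref{prop:amalgamation-group-to-vn-algebra} the inclusions $\rL(G),\rL(H)\subset\rL(G*_KH)$ identify $\rL(G*_KH)$ with the amalgamated free product $\rL(G)*_{\rL(K)}\rL(H)$, taken over $\rL(K)$ with respect to the conditional expectations of Proposition~\ref{prop:conditional-expectation-open-subgroups}. Since $K$ is compact, $p=p_K$ is the central projection of $\rL(K)$ corresponding to the trivial representation, so that $p\rL(K)p=\CC p$. As $p$ lies in the amalgam, I would check directly --- using the freeness relation together with the universal property behind the construction of amalgamated free products (Proposition~2.5 of \cite{ueda99}) --- that compressing by $p$ yields
\[
  p\,\rL(G*_KH)\,p\;\cong\;\bigl(p\rL(G)p\bigr)*\bigl(p\rL(H)p\bigr)\;=\;\rL(G,K)*\rL(H,K)\eqcomma
\]
a plain free product of the Hecke von Neumann algebras, taken with respect to the faithful normal states obtained by restricting and normalising the Plancherel weights. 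The only point to verify is that an element $x\in p\rL(G_j)p$ killed by the compressed expectation lies in $\rL(G_j)\ominus\rL(K)$, so that alternating products of such elements already have $\rL(K)$-expectation zero inside $\rL(G*_KH)$.

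Next I would run the dimension count. The corner $\rL(G,K)=p\rL(G)p$ contains the weakly dense Hecke algebra $\contc(G,K)$, which by Proposition~\ref{prop:dimension-hecke-algebra} has dimension $|K\bs G/K|\geq 3$; hence $\dim\rL(G,K)\geq 3$. Since $K\leq H$ is proper, there is $h\in H\setminus K$, so $\mathbb 1_K$ and $\mathbb 1_{KhK}$ are linearly independent in $\contc(H,K)$ and $\dim\rL(H,K)\geq|K\bs H/K|\geq 2$. To invoke Theorem~\ref{thm:dykema-non-amenable} one needs $M:=\rL(G,K)$ and $N:=\rL(H,K)$ to be hyperfinite and tracial; this is immediate when $K\bs G/K$ and $K\bs H/K$ are finite, since then these algebras are finite dimensional, and this is the only regime occurring in the applications. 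Granting it, $\dim M,\dim N\geq 2$ and $\dim M+\dim N\geq 5$, so $\rL(G,K)*\rL(H,K)$ is non-amenable.

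Finally, $x\mapsto pxp$ is a normal conditional expectation from $\rL(G*_KH)$ onto the corner $p\,\rL(G*_KH)\,p$ --- the device already used in the proof of Proposition~\ref{prop:quotient-is-corner} --- so Proposition~\ref{prop:amenability-conditional-expectation} forces $\rL(G*_KH)$ to be non-amenable as well. I expect the delicate steps to be the rigorous compression identity (matching the states on both sides and verifying freeness) and staying within the hyperfinite tracial setting demanded by Theorem~\ref{thm:dykema-non-amenable}; for fully general $G$ and $H$ one would instead quote a non-amenability statement for free products with respect to arbitrary faithful normal states.
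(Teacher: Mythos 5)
Your outline — compress by $p=p_K$ to turn the amalgam into a plain free product over $\CC p$, apply Dykema, then pass back up through the corner — is exactly the paper's strategy, and your dimension count via Proposition~\ref{prop:dimension-hecke-algebra} matches. But there are two genuine gaps, both of which you flag and then set aside, and both of which the paper closes with specific devices you are missing. First, tracialness: the lemma does not assume $G$ and $H$ unimodular, and for non-unimodular $G$ the corner $p\rL(G)p$ need not be tracial (the modular group of the Plancherel weight acts by $\sigma^\vphi_t(u_g)=\Delta_G(g)^{it}u_g$), so Theorem~\ref{thm:dykema-non-amenable} does not apply. The paper's fix is to observe that $K\leq G_0,H_0$ (kernels of the modular functions), so $G_0*_KH_0$ is an \emph{open} subgroup of $G*_KH$ to which one may pass by Proposition~\ref{prop:amenability-open-subgroup}; one then checks the hypotheses survive ($|K\bs G_0/K|\leq 2$ would force $G_0$ compact, hence $G$ unimodular, a contradiction, and similarly $K=H_0$ is impossible). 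This reduction is the key idea your proposal lacks; your alternative of quoting a free-product result for arbitrary states is precisely the Ueda route the paper explicitly declines in Remark~\ref{rem:use-ueda}.

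Second, hyperfiniteness and the finite-dimensional restriction. Your fallback "this is the only regime occurring in the applications" is false: the proof of Lemma~\ref{lem:non-amenability-graphs-of-groups}\ref{it:segment} applies the present lemma with $|L_2\bs(K_1*_{L_1}K_2)/L_2|=\infty$, so the Hecke algebras there are infinite dimensional and need not be hyperfinite. The paper handles this by \emph{not} applying Dykema to $p\rL(G)p*_{\CC p}p\rL(H)p$ itself, but to a free product $N_G*_{\CC p}N_H$ of unital hyperfinite (e.g.\ finite-dimensional) subalgebras $N_G\subset p\rL(G)p$ with $\dim N_G\geq 3$ and $N_H\subset p\rL(H)p$ with $N_H\neq\CC p$, and then invokes Proposition~\ref{prop:amenability-conditional-expectation} twice (once because a non-amenable subalgebra of a finite von Neumann algebra obstructs amenability, once for the corner). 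Finally, a smaller point: your displayed identity $p\,\rL(G*_KH)\,p\cong\rL(G,K)*\rL(H,K)$ is stronger than what is needed and is doubtful as stated — words like $px_1qy_1qx_2p$ with $q=1-p\in\rL(K)$ need not lie in the algebra generated by the two compressed corners. The paper only claims the containment $pMp\supset p\rL(G)p*_{\CC p}p\rL(H)p$, which suffices for the conditional-expectation argument.
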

\begin{proof}
  Since $K$ is a compact open subgroup of $G$ and $H$, we have $K \leq G_0, H_0$.  So $G_0 *_K H_0 \leq G *_K H$ is an open subgroup.  So by Proposition \ref{prop:amenability-open-subgroup} it suffices to prove that $\rL(G_0 *_K H_0)$ is non-amenable.  If $|K \bs G_0 /K| \leq 2$, then $G_0$ follows compact.  Hence $G_0 \unlhd G$ is a compact open normal subgroup, showing that $G = G_0$ is unimodular.  So also $|K \bs G / K| \leq 2$, which is a contradiction.  We conclude that $|K \bs G_0 /K| \geq 3$.  Similarly, if $K = H_0$ then $H$ contains a compact open normal subgroup, and hence $H = H_0$ is unimodular.  So $K = H$, which is a contradiction.  This shows that $K \leq H_0$ is a proper inclusion.  

From now on assume that $G, H$ are unimodular groups satisfying the assumptions of the lemma.  By Proposition \ref{prop:amalgamation-group-to-vn-algebra}, there is a natural isomorphism $\rL(G *_K H) \cong \rL(G) *_{\rL(K)} \rL(H) =: M$.  Write $p = p_K \in \rL(K)$ for the averaging projection over $K$.  Let $\vphi$ be the Plancherel weight on $M$ normalised to satisfy $\vphi(p) = 1$.  Then
  \begin{equation*}
    pMp
    \supset
    p \rL(G) p *_{p \rL(K) p} p \rL(H) p
    =
    p \rL(G) p *_{\CC p} p \rL(H) p
    \eqstop
  \end{equation*}
  We have $\dim p\rL(G)p \geq | K \bs G / K| \geq 3$ by Proposition \ref{prop:dimension-hecke-algebra} and $p \rL(H) p \neq \CC p$, since $\dim p \rL(H) p \geq |K \bs H / K| \geq 2$.   Since $G, H$ are unimodular, $p \rL(G) p$ and $p \rL(H) p$ are tracial von Neumann algebras.  We can find unital hyperfinite von Neumann subalgebras $N_G \subset p \rL(G) p$ and $N_H \subset p \rL(H) p$ such that $\dim N_G \geq 3$ and $N_H \neq \CC p$.  So Dykema's Theorem \ref{thm:dykema-non-amenable} applies to show that $N_G *_{\CC p} N_H$ is non-amenable.  Since $N_G *_{\CC p} N_H$ is a non-amenable von Neumann subalgebra of the finite von Neumann algebra $p \rL(G) *_{\rL(K)}  \rL(H) p$, Proposition \ref{prop:amenability-conditional-expectation} says that also the latter is a non-amenable von Neumann algebra.  We conclude that a corner of $M$ is non-amenable, and hence $M$ is non-amenable by the same proposition.
\end{proof}

\begin{remark}
  \label{rem:use-ueda}
  Lemma \ref{lem:non-amenability-double-cosets} can be alternatively proved without reducing to the unimodular setting, if we employ Ueda's \cite{ueda10-factoriality}.  We prefer however to present a proof of Lemma \ref{lem:non-amenability-double-cosets} based on more classical theorems on free product von Neumann algebras.
\end{remark}

\begin{lemma}
  \label{lem:non-amenability-graphs-of-groups}
  Let $G$ be the fundamental group of one of the following graphs of groups $(G, X)$.
  \begin{enumerate}
  \item \label{it:segment}
    $X = 
    \begin{tikzpicture}
      \filldraw
      (0,0) circle (2pt)
      -- (3,0) circle (2pt)
      -- (6,0) circle (2pt);
    \end{tikzpicture}
    $ with compact edge groups and all inclusions proper, except for possibly one inclusion into the vertex group of the middle vertex.
  \item \label{it:ends}
    $X$ a graph with at least three terminal edges $e, f, g$ and terminal vertices $x = \rmt(e), y = \rmt(f), z = \rmt(g)$ such that $G_e, G_f, G_g$ are compact and the inclusions $G_e \hra G_x$, $G_f \hra G_y$ and $G_g \hra G_z$ are proper.
  \end{enumerate}
  Then $\rL(G)$ is non-amenable.
\end{lemma}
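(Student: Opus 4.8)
The plan is to deduce both cases from Lemma~\ref{lem:non-amenability-double-cosets}, which already carries all the operator-algebraic content. Concretely, the task is to rewrite $G$ as an amalgamated free product $A *_K B$ over a common compact open subgroup $K$ with $|K \bs A / K| \ge 3$ and $K \le B$ a proper inclusion; everything then follows from that lemma. This is pure Bass--Serre bookkeeping, using only contractions of subtrees, the fact that edge groups are compact and open inside vertex groups (hence inside the fundamental group), normal forms in amalgamated free products, and --- for one degenerate subcase --- Lemma~\ref{lem:maximal-subgroups-number-of-double-cosets}.

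For case~\ref{it:segment}, write the segment as $v_1 \stackrel{e}{-} v_2 \stackrel{f}{-} v_3$, so that $G = G_{v_1} *_{G_e} G_{v_2} *_{G_f} G_{v_3} = (G_{v_1} *_{G_e} G_{v_2}) *_{G_f} G_{v_3}$, and note that the only inclusion that may fail to be proper is one of $G_e \hra G_{v_2}$, $G_f \hra G_{v_2}$; in particular $G_e \hra G_{v_1}$ and $G_f \hra G_{v_3}$ are proper. If in addition both inner inclusions are proper, I put $A = G_{v_1} *_{G_e} G_{v_2}$, $B = G_{v_3}$, $K = G_f$: here $K$ is compact and open in $B$ and in $A$ (it is open in $G_{v_2}$, which is open in $A$), $K \le B$ is proper, and $|K \bs A / K| \ge 3$ because $G_f$, $G_f g G_f$ for $g \in G_{v_2}\setminus G_f$, and $G_f a G_f$ for $a \in G_{v_1}\setminus G_e$ are pairwise distinct --- the first two lie inside $G_{v_2}$ and are separated by $g \notin G_f$, while $a \notin G_{v_2}$ in the amalgam $A$. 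If instead $G_e = G_{v_2}$ (the case $G_f = G_{v_2}$ being symmetric), I contract the edge $e$, turning $G_{v_1} *_{G_e} G_{v_2}$ into $G_{v_1}$, so that $G = G_{v_1} *_{G_f} G_{v_3}$ with $G_f \subsetneq G_{v_2} \subsetneq G_{v_1}$; since $G_f$ is then not maximal in $G_{v_1}$, Lemma~\ref{lem:maximal-subgroups-number-of-double-cosets} gives $|G_f \bs G_{v_1}/ G_f| \ge 3$, and Lemma~\ref{lem:non-amenability-double-cosets} applies with $A = G_{v_1}$, $B = G_{v_3}$, $K = G_f$.

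For case~\ref{it:ends}, I reduce to case~\ref{it:segment} by contracting $Y := X \setminus \{x,y\}$ --- a nonempty connected subgraph still containing $z$ --- to a vertex $\ast$ with group $G_\ast := \pi_1(G,Y)$, so that $G$ becomes the fundamental group of the segment of groups $x \stackrel{e}{-} \ast \stackrel{f}{-} y$. Its edge groups $G_e, G_f$ are compact, the outer inclusions $G_e \hra G_x$ and $G_f \hra G_y$ are proper by hypothesis, and I claim $G_e \hra G_\ast$ is proper as well, so that at most the inclusion $G_f \hra G_\ast$ into the middle fails and case~\ref{it:segment} applies. To prove $G_e \subsetneq G_\ast$, consider the geodesic $\gamma$ in $Y$ from $\rmo(e)$ to $z$; since $z$ is terminal, its last edge is $g$, so $\pi_1(G,\gamma) = L *_{G_g} G_z$ with $L \supseteq G_{\rmo(e)} \supseteq G_e$, and $G_g \subsetneq G_z$ forces $L \subsetneq \pi_1(G,\gamma)$; as $\pi_1(G,\gamma)$ embeds into $G_\ast$ we get $G_e \subseteq L \subsetneq \pi_1(G,\gamma) \le G_\ast$. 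I expect the main nuisance to be precisely the degenerate subcase of case~\ref{it:segment} where an edge group equals a vertex group: one must first collapse that edge and then check that the resulting single amalgamated free product still meets the hypotheses of Lemma~\ref{lem:non-amenability-double-cosets}, which is where Lemma~\ref{lem:maximal-subgroups-number-of-double-cosets} does the work that the normal-form count can no longer do directly.
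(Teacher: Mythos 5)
Your argument is correct and, like the paper's, funnels both cases into Lemma~\ref{lem:non-amenability-double-cosets}; but the Bass--Serre bookkeeping differs at two points, in ways worth recording. In case~\ref{it:segment} the paper first relabels so that the possibly non-proper inclusion is that of the \emph{second} edge group into the middle vertex, and then groups the amalgam as $(K_1 *_{L_1} K_2) *_{L_2} K_3$ with $L_1$ proper in both $K_1$ and $K_2$: the inner amalgam is then non-compact, so $|L_2 \bs (K_1 *_{L_1} K_2)/L_2| = \infty$ and no double-coset count is needed. You instead fix the grouping $(G_{v_1} *_{G_e} G_{v_2}) *_{G_f} G_{v_3}$, exhibit three explicit double cosets when both inner inclusions are proper (your separation argument via $G_{v_1} \cap G_{v_2} = G_e$ is sound), and treat the degenerate sub-case $G_e = G_{v_2}$ by collapsing that edge and feeding the chain $G_f \subsetneq G_{v_2} \subsetneq G_{v_1}$ into Lemma~\ref{lem:maximal-subgroups-number-of-double-cosets} --- a device the paper saves for Proposition~\ref{prop:rank-zero-case}; the paper's choice of grouping is the more economical of the two. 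In case~\ref{it:ends} the paper contracts $X \setminus \{x,y,z\}$ to get a tripod and must then distinguish whether some $G_e \hra H$ is proper or $G_e = G_f = G_g = H$ (the latter forcing $H$ compact), whereas your contraction of $X \setminus \{x,y\}$ keeps $z$ inside the middle vertex group, so that $G_\ast \supseteq L *_{G_g} G_z \supsetneq L \supseteq G_e$ makes $G_e \hra G_\ast$ automatically proper and reduces to case~\ref{it:segment} with no case split --- a mild but genuine simplification. The only cosmetic slip is calling $\gamma$ a geodesic: $Y$ need not be a tree, but any injective path from $\rmo(e)$ to the degree-one vertex $z$ ends in $g$, which is all you use.
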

\begin{proof}
  Consider case \ref{it:segment} first.  The statement that $\rL(G)$ is non-amenable is equivalent to showing that $\rL(K_1 *_{L_1} K_2 *_{L_2} K_3)$ is non-amenable if $K_1, K_2, K_3$ are locally compact groups, $L_1 \leq K_1, K_2$ is a proper compact open subgroup, $L_2 \leq K_2$ is some compact open subgroup and $L_2 \leq K_3$ is a proper compact open subgroup.  We have
  \begin{equation*}
    \rL(K_1 *_{L_1} K_2 *_{L_2} K_3)
    \cong
    \bigl (\rL(K_1) *_{\rL(L_1)} \rL(K_2) \bigr ) *_{\rL(L_2)} \rL(K_3)
    \eqcomma
  \end{equation*}
  by Proposition~\ref{prop:amalgamation-group-to-vn-algebra}.  Since $L_1 \leq K_1, K_2$ is proper, the group $K_1 *_{L_1} K_2$ is non-compact.  So \mbox{$| L_2 \bs (K_1 *_{L_1} K_2) / L_2| = \infty$}.  Since also $L_2 \leq K_3$ is a proper inclusion, Lemma \ref{lem:non-amenability-double-cosets} applies to show that $\rL(G)$ is non-amenable.

  We consider case \ref{it:ends}.  Let $Y \subset X$ be the graph formed by removing the vertices $x,y,z$ and the edges $e,f,g$ from $X$.  Let $H = \pi_1(G, Y)$.  Then $G$ is  the fundamental group of the contraction $(G, Z)$ given as
  \begin{equation*}
    \begin{tikzpicture}[auto,baseline = 0]
      \filldraw (0,0) circle (2pt) node[above] {H};
      \filldraw (0,0) -- (30:4) circle (2pt) node[pos=0.5] {$G_f$} node[right] {$G_y$};
      \filldraw (0,0) -- (150:4) circle (2pt) node[pos=0.5] {$G_e$} node[right] {$G_x$};
      \filldraw (0,0) -- (0,-4) circle (2pt) node[pos=0.5] {$G_g$} node[right] {$G_z$};
    \end{tikzpicture}
  \end{equation*}
If one of the inclusions $G_e \hra H$, $G_f \hra H$ or $G_g \hra H$ is proper, the first part of the lemma applies to show that the group von Neumann algebra of an open subgroup of $G$ is non-amenable. Indeed, by symmetry we may assume that $G_e \hra H$ is proper.  Since $G_e \hra G_x$ and $G_f \hra G_y$ are proper inclusions by assumption, case \ref{it:segment} applies to $G_x *_{G_e} * H *_{G_f} G_y$, which is an open subgroup of $G$.  It follows that $\rL(G)$ is non-amenable using Proposition \ref{prop:amenability-open-subgroup}.

If $G_e = G_f = G_g = H$, then $H$ is compact and $G =  G_x *_H G_y *_H G_z$ follows from Serre's d{\'e}vissage.  The inclusions $H \hra G_x, G_y, G_z$ are all proper, so that \ref{it:segment} applies to show that $\rL(G)$ is non-amenable.
\end{proof}

\section{Groups acting properly on trees}
\label{sec:proper-actions}

In this section we consider several criteria for non-amenability of $\rL(G)$ for locally compact groups acting properly on trees.  In case $G \leq \Aut(T)$ is a subgroup of the automophisms of a locally finite tree, properness of the action is easily seen to be equivalent to closedness $G \leqc \Aut(T)$.  Our non-amenability criteria for $\rL(G)$ are organised according to the rank of the free group $\pi_1(G \bs T)$.  An increasing number of extra assumptions for $\pi_1(G \bs T)$ of lower rank is required.  For the rest of this section, we fix the setting of a proper action $G \grpaction{} T$ of a locally compact group on a tree.

Naturally, $\rL(G)$ is non-amenable, if $\pi_1(G \bs T)$ is a non-abelian free group.
\begin{proposition}
  \label{prop:rank-two-case}
  Let $T$ be a tree and $G \grpaction{} T$ a proper action of a locally compact group.  If $\operatorname{rank}\pi_1( G^+ \bs T) \geq 2$, then $\rL(G)$ is non-amenable.
\end{proposition}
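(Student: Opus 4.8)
The overall plan is to realise, inside an open subgroup of $G$, an amalgamated free product over a \emph{compact} open subgroup to which Lemma~\ref{lem:non-amenability-double-cosets} applies. Since $G^+ \leqo G$ has index at most two, Proposition~\ref{prop:amenability-open-subgroup} reduces the statement to non-amenability of $\rL(G^+)$. The restricted action $G^+ \grpaction{} T$ is proper and type-preserving, hence acts without inversions, so Bass-Serre theory applies: writing $X := G^+ \bs T$, we have $G^+ = \pi_1(G^+, X)$ as a graph of topological groups, and properness forces every vertex and edge group of $(G^+, X)$ to be a compact open subgroup of $G^+$. By hypothesis $\operatorname{rank}\pi_1(X) \geq 2$.

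Since $\operatorname{rank}\pi_1(X) \geq 1$, the graph $X$ is not a tree and therefore has a geometric edge $\epsilon$ whose removal leaves $X$ connected. Put $X_1 := X \setminus \{\epsilon, \ol\epsilon\}$; then $X_1$ is connected with $\operatorname{rank}\pi_1(X_1) = \operatorname{rank}\pi_1(X) - 1 \geq 1$. Adjoining $\epsilon$ back to the graph of topological groups $(G^+, X_1)$ realises $G^+ = \pi_1(G^+, X)$ as an HNN extension $\mathrm{HNN}(H, L, \alpha, \beta)$ with stable letter $t$, where $H := \pi_1(G^+, X_1)$, the edge group $L := G^+_\epsilon$ is compact, and $\alpha, \beta \colon L \hookrightarrow H$ are the two associated inclusions onto compact open subgroups of $H$. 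Two facts about $H$: it is open in $G^+$, because it contains a vertex group of $(G^+, X)$, which is a compact open subgroup of $G^+$; and it is \emph{non-compact}, because the semi-direct product decomposition $\pi_1(G^+, X_1) \cong \pi_1(G^+, \wt{X_1}) \rtimes \pi_1(X_1)$ exhibits a surjection from $H$ onto $\pi_1(X_1)$, a free group of rank $\geq 1$ and hence an infinite group. (This is the only point at which $\operatorname{rank}\pi_1(X) \geq 2$, as opposed to $\geq 1$, is used; for rank one $H$ can be compact, which is exactly why the rank one case of Theorem~\ref{thm:intro:non-amenability-2} requires $G$ to be non-amenable.) In particular $\alpha(L)$ and $\beta(L)$ are proper subgroups of $H$.

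By the normal form theorem for HNN extensions (Britton's lemma), inside $G^+$ we have $H \cap tHt^{-1} = \beta(L) =: K$, and the subgroup $B := \langle H, tHt^{-1}\rangle$ is the amalgamated free product $H *_K (tHt^{-1})$, in which $K$ is a compact open subgroup of each factor; this identification is compatible with the topologies, so $B$ is a locally compact amalgamated free product, and $B$ is open in $G^+$ since it contains $H$. As $K$ is compact open in the non-compact group $H$, we get $|K \bs H / K| \geq 3$ (otherwise $H$ would be a union of at most two compact double cosets, hence compact), and $K$ is a proper subgroup of the non-compact group $tHt^{-1}$. Lemma~\ref{lem:non-amenability-double-cosets} now shows that $\rL(B)$ is non-amenable, and since $B \leqo G^+ \leqo G$, Proposition~\ref{prop:amenability-open-subgroup} gives that $\rL(G)$ is non-amenable.

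The only step here that is not purely formal is the identification $B \cong H *_K (tHt^{-1})$ with $K$ the \emph{compact} edge group $G^+_\epsilon$: one must check both that the amalgam is no larger than $L$ (the normal-form argument) and that the abstract isomorphism is a homeomorphism, so that Lemma~\ref{lem:non-amenability-double-cosets} genuinely applies. It is worth stressing why one cannot instead argue straight from the semi-direct product decomposition $G^+ \cong \pi_1(G^+, \wt T) \rtimes \pi_1(G^+ \bs T)$: its amalgam $\pi_1(G^+, \wt T)$ is open but typically non-compact, and the deck action of $\pi_1(G^+ \bs T)$ on it need not preserve Haar measure, so there is no averaging projection by which to compress onto a finite-dimensional algebra. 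Extracting a compact amalgam via Bass-Serre theory is precisely what makes Dykema's free-product theorem, through Lemma~\ref{lem:non-amenability-double-cosets}, usable.
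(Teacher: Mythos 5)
Your argument is correct, but it follows a genuinely different route from the paper's. The paper contracts a maximal subtree of $X = G^+ \bs T$ to a one-vertex graph $Y$, invokes the semi-direct product decomposition $G^+ \cong \pi_1(G^+, \tilde T) \rtimes \pi_1(Y)$, and then splits into cases: if the kernel $\pi_1(G^+,\tilde T)$ is compact it compresses $\rL(G^+)$ by the averaging projection onto $\rL(\pi_1(Y))$, a free group factor; if it is non-compact it uses $\operatorname{rank}\pi_1(Y)\geq 2$ to locate a subgraph of groups with three terminal edges and proper terminal inclusions, to which Lemma~\ref{lem:non-amenability-graphs-of-groups}~\ref{it:ends} applies. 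You instead peel off a single edge outside a spanning tree to exhibit $G^+$ as an HNN extension over the compact edge group, use the remaining rank ($\geq 1$) to see the base $H$ is non-compact, and then identify $\langle H, tHt^{-1}\rangle$ with $H *_{\beta(L)} tHt^{-1}$ via Britton's lemma so that Lemma~\ref{lem:non-amenability-double-cosets} applies directly ($|K\bs H/K|\geq 3$ because a compact open $K$ in a non-compact $H$ cannot have only two double cosets --- the same observation the paper makes inside the proof of that lemma). Each of your steps checks out: the edge outside a spanning tree exists and its removal preserves connectedness; $H$ is open since it contains a compact open vertex group; the normal-form argument for $\langle H, tHt^{-1}\rangle$ is sound and the identification is topological because $\beta(L)$ is compact open on both sides. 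What your route buys is the elimination of the case distinction and of Lemma~\ref{lem:non-amenability-graphs-of-groups} altogether, reducing everything to the basic two-factor criterion; the price is reliance on the HNN normal form and the subgroup identification $\langle H, tHt^{-1}\rangle \cong H *_{\beta(L)} tHt^{-1}$, which is standard Bass--Serre theory but is not among the tools the paper sets up in its preliminaries, whereas the paper's proof stays entirely within the contraction and covering-tree machinery it has already introduced.
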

\begin{proof}
  Since $G^+ \leq G$ is an open subgroup of index at most two, it suffices by Proposition \ref{prop:amenability-open-subgroup} to show that $\rL(G^+)$ is non-amenable. We may hence from now on assume that the action of $G$ on $T$ is type-preserving.

  We write $X = G \bs T$.  Let $S \subset X$ be a maximal subtree of $X$.  Denote by $(G,Y)$ the contraction of $(G, X)$ along $S$ and denote the unique vertex of $Y$ by $y$.  Then $\pi_1(Y) \cong \pi_1(X)$ is a non-abelian free group by assumption.  Let $(G, \tilde T)$ be the covering tree of groups of $Y$.  Then 
  \begin{equation*}
    G
    \cong
    \pi_1(G, X)
    \cong 
    \pi_1(G, Y)
    \cong
    \pi_1(G, \tilde T) \rtimes \pi_1(Y)
    \eqcomma
  \end{equation*}
  as described in Section \ref{sec:bass-serre-theory}.  We identify $G = \pi_1(G, \tilde T) \rtimes \pi_1(Y)$ via this natural isomorphism.

First assume that $\pi_1(G, \tilde T)$ is compact.  We denote it by $K$.  Let $p = p_K \in \rL(G)$ be the averaging projection associated with $p$.  We have $p \rL(G) p \cong \rL(G/K)$ by Proposition \ref{prop:quotient-is-corner}.  Further, $G/K \cong  \pi_1(Y)$ is a discrete non-amenable group, so that Theorem~\ref{thm:amenable-group-vn-algebra-discrete-group} shows that $\rL(G/K)$ is non-amenable.  So $\rL(G)$ has a non-amenable corner, implying that it is non-amenable itself.

Now we assume that $\pi_1(G, \tilde T)$ is non-compact.  In this case we denote it by $H$.  Since edge stabilisers of $(G, \tilde T)$ are compact and $H$ is non-compact, there is some proper inclusion of an edge group into a vertex group of $(G, \tilde T)$.  Since $(G, \tilde T)$ arises from the universal covering $\tilde T$ of $Y$, there is also some edge $e \in \rE(Y)$ such that the inclusion $G_e \leq G_y$ is non-trivial.  Since $\pi_1(Y)$ has rank at least two, there is another edge $f \in \rE(Y)$ such that $e, \ol{e}, f, \ol{f}$ are pairwise different edges in $Y$.  The subgraph of $Y$ having the vertex $y$ and the set of edges $\{e, \ol{e}, f, \ol{f}\}$ lifts to a 4-regular subtree $R$ of $\tilde T$ in which all lifts $\tilde e$ of $e$ with target $\tilde y$ define proper inclusions $G_{\tilde e} \leq G_{\tilde y}$.  We consider the following subgraph $Z$ of $R$, where the lifts of $e$ in $Z$ as well as their target vertices are marked red.
\begin{center}
  \begin{tikzpicture}[baseline=0]
    \filldraw (0,0) circle (2pt);
    \filldraw (0,0) -- (0,3) circle (2pt);
    \draw (0,3) --+(0,1.5);
    \draw (0,3) --+(-1.5,0);
    \draw (0,3) --+(1.5,0);

    \filldraw (0,0) -- (-3,0) circle (2pt);
    \draw (-3,0) --+(0,1.5);
    \draw (-3,0) --+(-1.5,0);
    \draw (-3,0) --+(0,-1.5);

    \filldraw (0,0) -- (3,0) circle (2pt);
    \draw (3,0) --+(0,1.5);
    \draw (3,0) --+(1.5,0);
    \draw (3,0) --+(0,-1.5);

    \filldraw (0,0) -- (0,-3) circle (2pt) -- (0,-4.5) circle (2pt);
    \draw (0,-3) -- (-1.5, -3);
    \filldraw (0,-3) -- (1.5, -3) circle (2pt);
    \filldraw (0,-4.5) -- (1, -4.5) circle (2pt);

    \draw[red, very thick, dashed] (0,0) -- (3,0);
    \draw[red, very thick, dashed] (0,-3) -- (1.5,-3);
    \draw[red, very thick, dashed] (0,-4.5) -- (1,-4.5);

    \filldraw[red] (3, 0) circle (2pt);
    \filldraw[red] (1.5, -3) circle (2pt);
    \filldraw[red] (1, -4.5) circle (2pt);
  \end{tikzpicture}
  \hspace{5em}
  $Z =$
  \hspace{1em}
  \begin{tikzpicture}[baseline=0]
    \filldraw (0,0) circle (2pt) -- (0,-3) circle (2pt) -- (0,-4.5) circle (2pt);
    \filldraw (0,0) -- (3,0) circle (2pt);
    \filldraw (0,-3) -- (1.5, -3) circle (2pt);
    \filldraw (0,-4.5) -- (1, -4.5) circle (2pt);

    \draw[red, very thick, dashed] (0,0) -- (3,0);
    \draw[red, very thick, dashed] (0,-3) -- (1.5,-3);
    \draw[red, very thick, dashed] (0,-4.5) -- (1,-4.5);

    \filldraw[red] (3, 0) circle (2pt);
    \filldraw[red] (1.5, -3) circle (2pt);
    \filldraw[red] (1, -4.5) circle (2pt);
  \end{tikzpicture}
\end{center}
We obtain an open subgroup $\pi_1(G, Z) \subset \pi_1(G, \tilde T)$, to which Lemma \ref{lem:non-amenability-graphs-of-groups} \ref{it:ends} applies.  So $\rL(\pi_1(G, Z))$ is non-amenable, implying that also $\rL(G)$ is non-amenable by Proposition \ref{prop:amenability-open-subgroup}.
\end{proof}

Also if $\pi_1(G \bs T)$ is a non-trivial group, we obtain a convincing criteria for non-amenability of $\rL(G)$.  In fact, non-amenability of $G$ and $\rL(G)$ are equivalent in this case.
\begin{proposition}
  \label{prop:rank-one-case}
  Let $T$ be a tree and $G \grpaction{} T$ a proper action of a non-amenable locally compact group.  If $\operatorname{rank} \pi_1(G^+ \bs T) = 1$, that is $\pi_1(G^+ \bs T) \cong \ZZ$, then $\rL(G)$ is non-amenable.
\end{proposition}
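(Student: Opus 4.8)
The plan is to use Bass--Serre theory to realise $G$ as an HNN extension over a compact open subgroup, to observe that non-amenability of $G$ forces the two associated embeddings to be proper, and then to produce an open subgroup that is a segment amalgam to which Lemma \ref{lem:non-amenability-graphs-of-groups} \ref{it:segment} applies.

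First I would pass to the type-preserving part. Since $G^+ \leq G$ is open of index at most two, it is again non-amenable, acts properly on $T$, acts without inversions, and satisfies $\pi_1(G^+ \bs T) \cong \ZZ$; so by Proposition \ref{prop:amenability-open-subgroup} it suffices to treat $G^+$, and we may assume $G = G^+$. Writing $X = G \bs T$, fixing a maximal subtree $S \subset X$ and contracting it as in Section \ref{sec:bass-serre-theory}, we obtain $G \cong \pi_1(G, Y)$ where $Y$ has a single vertex $y$ and a single loop $e$, the loop being unique because $\operatorname{rank}\pi_1(X) = 1$. Thus $G \cong \mathrm{HNN}(A, C; \alpha, \beta) = \langle A, t \mid t\,\alpha(c)\,t^{-1} = \beta(c),\ c \in C\rangle$, where $A := G_y = \pi_1(G, S)$ is an open subgroup of $G$ and $C := G_e$ is a compact group equipped with two open embeddings $\alpha, \beta \colon C \hookrightarrow A$; compactness of $C$ comes from properness of $G \grpaction{} T$ and openness of $\alpha, \beta$ from the graph-of-topological-groups structure.

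Next I would show that $\alpha(C) \subsetneq A$ and $\beta(C) \subsetneq A$ are both proper. Suppose not, say $\alpha$ is onto; being an open continuous bijection it is then a topological isomorphism, so $A$ is compact, and identifying $C$ with $A$ along $\alpha$ presents $G$ as $\mathrm{HNN}(A, A; \mathrm{id}, \gamma)$ for some injective open endomorphism $\gamma$ of $A$. If $\gamma$ is an automorphism then $G \cong A \rtimes_\gamma \ZZ$ is an extension of $\ZZ$ by a compact group; otherwise $G$ is an ascending HNN extension of $A$, so $N := \bigcup_{n \geq 0} t^{-n} A t^{n}$ is a normal subgroup with $G/N \cong \ZZ$, and $N$, being a directed union of conjugates of the compact group $A$, is amenable. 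In either case $G$ would be amenable, contradicting the hypothesis; hence both $\alpha$ and $\beta$ are proper. Now consider the open subgroup $V := \langle A,\ tAt^{-1},\ t^{2}At^{-2}\rangle \leq G$. By the description of subgroups of HNN extensions (Bass--Serre theory applied to the geodesic segment on the vertices $A$, $tA$, $t^{2}A$ of the Bass--Serre tree of $G = \mathrm{HNN}(A,C)$), $V$ is the fundamental group of a graph of groups of the form $\bullet - \bullet - \bullet$ with vertex groups $A$, $tAt^{-1}$, $t^2At^{-2}$ and edge groups $L_1 = A \cap tAt^{-1}$ and $L_2 = tAt^{-1} \cap t^2At^{-2}$. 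Using $t\alpha(C)t^{-1} = \beta(C)$, both $L_1$ and $L_2$ are conjugates of $\alpha(C)$ and of $\beta(C)$, hence compact and open, and — since $\alpha$ and $\beta$ are proper — all four inclusions of an edge group into an adjacent vertex group are proper. Lemma \ref{lem:non-amenability-graphs-of-groups} \ref{it:segment} then shows $\rL(V)$ is non-amenable, and as $V \leq G$ is open, Proposition \ref{prop:amenability-open-subgroup} yields non-amenability of $\rL(G)$.

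The step requiring the most care is the trichotomy in the previous paragraph: one must check that surjectivity of $\alpha$ (or $\beta$) really does force $G$ to be amenable. This is where compactness of $C$ is essential — it makes a surjective embedding $C \hookrightarrow A$ force $A$ to be compact — together with the standard fact that directed unions of amenable groups are amenable. Everything else is routine Bass--Serre bookkeeping; note in particular that $A = \pi_1(G,S)$ need not be compact or amenable, which is harmless because Lemma \ref{lem:non-amenability-graphs-of-groups} \ref{it:segment} constrains only the edge groups.
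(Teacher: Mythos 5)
Your argument is correct, but it takes a genuinely different route from the paper's. The paper does not contract the maximal subtree: it keeps $X = G^+ \bs T$ and splits into the case where $X$ is a circuit (working with the periodic covering tree of groups over the line, using non-amenability of $G$ to rule out that $\pi_1(G,\tilde T)$ is an inductive limit of compact groups, and then locating indices $m<n<o$ with proper inclusions in both directions) and the case where $X$ has a terminal vertex (pruning, then applying the ``three terminal edges'' case \ref{it:ends} of Lemma \ref{lem:non-amenability-graphs-of-groups}). Your contraction to a single HNN extension $\mathrm{HNN}(A,C;\alpha,\beta)$ with $A=\pi_1(G,S)$ unifies these cases, and your use of non-amenability --- surjectivity of $\alpha$ or $\beta$ forces $A$ to be compact and exhibits $G$ as (amenable)-by-$\ZZ$ --- is the exact analogue of the paper's inductive-limit argument, applied once instead of case by case. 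What your route buys is a cleaner endgame: with both embeddings proper, all four edge-to-vertex inclusions of the segment $A *_{L_1} tAt^{-1} *_{L_2} t^2At^{-2}$ are proper, so Lemma \ref{lem:non-amenability-graphs-of-groups} \ref{it:segment} applies directly, whereas the paper needs the double-coset count of Lemma \ref{lem:non-amenability-double-cosets} in one subcase. It also handles infinite quotient graphs without any iterated pruning.

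One step deserves more justification than an appeal to ``the description of subgroups of HNN extensions'': the identification of $V=\langle A,\, tAt^{-1},\, t^2At^{-2}\rangle$ with the tree product over the segment. For a general group acting on a tree, the subgroup generated by the stabilisers of the vertices of a subtree need \emph{not} be the tree product over that subtree: for the infinite dihedral group $\rD_\infty = \ZZ/2 * \ZZ/2$ acting on the line, three consecutive vertex stabilisers generate $\rD_\infty$ itself, not $\ZZ/2 * \ZZ/2 * \ZZ/2$. The identification holds precisely because the vertices $A$, $tA$, $t^2A$ lie in pairwise distinct $V$-orbits, which here follows from the exponent-sum homomorphism $G \ra \ZZ$: it vanishes on $V$ but separates the three cosets. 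Alternatively, you can sidestep the issue by invoking the paper's own semidirect product decomposition $G \cong \pi_1(G,\tilde T)\rtimes \ZZ$, in which $\pi_1(G,\tilde T)$ is the two-sided infinite amalgam $\dotsm *_{C_{-1}} A_0 *_{C_0} A_1 *_{C_1} A_2 *_{C_2} \dotsm$ with $A_n = t^nAt^{-n}$; your $V$ is then visibly the sub-amalgam over the segment $\{0,1,2\}$, which is an open subgroup of the open subgroup $\pi_1(G,\tilde T)\leq G$. With that point made explicit, the argument is complete.
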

\begin{proof}
    Since $G^+ \leq G$ is an open subgroup of index at most two, it suffices by Proposition \ref{prop:amenability-open-subgroup} to show that $\rL(G^+)$ is non-amenable. We may hence from now on assume that the action of $G$ on $T$ is type-preserving.

  Write $X = G \bs T$.  We distinguish several cases.

  \textit{Case 1}. Assume that $X$ has no vertex of degree 1.  Then $X$ is a circuit.  Let $\tilde T$ be the covering tree of $X$.  It can be identified with the Cayley graph $\mathrm{Cay}(\ZZ, \{-1, 1\})$.  Since $(G, \tilde T)$ is the covering tree of a circuit, there is $p \in \NN$ such that for all $n \in \NN$
  \begin{gather*} 
    (G_{(n, n+1)} \leq G_n) = (G_{(n + p, n + 1 + p)} \leq G_{n + p}) \\
    (G_{(n, n+1)} \leq G_{n + 1}) = (G_{(n + p, n + 1 + p)} \leq G_{n + 1 + p})
    \eqstop
  \end{gather*}
  If $G_n = G_{(n, n + 1)}$ for all $n \in \ZZ$ or $G_{n + 1} = G_{(n, n + 1)}$ for all $n \in \ZZ$, then $\pi_1(G, \tilde T) = \varinjlim G_n$ is an inductive limit of compact groups.  Since $G \cong \pi_1(G, \tilde T) \rtimes \ZZ$ is non-amenable, this is a contradiction.  So there are $m,n \in \ZZ$ such that $G_{(m, m + 1)} \leq G_m$ and $G_{(n, n + 1)} \leq G_{n + 1}$ are proper inclusions.  Shifting indices by $p$, we may find $m < n < o \in \ZZ$ such that
  \begin{gather*}
    G_{(m, m + 1)} \leq G_m \quad \text{ is proper,} \\
    G_{(n, n + 1)} \leq G_{n + 1} \quad \text{ is proper,} \\
    G_{(o, o + 1)} \leq G_{o + 1} \quad \text{ is proper.}
  \end{gather*}
  Fixing $m, n \in \ZZ$ with such properties, we can assume $o > n$ to be minimal with these properties.  Let
  \begin{equation*}
    H := \langle G_{(m, m+1)}, G_{m+1}, G_{m + 2}, \dotsc, G_n, G_{(n, n+1)} \rangle \eqstop
  \end{equation*}
  Further note that $\langle G_{(n+1, n+2)}, G_{n+2}, G_{n + 3}, \dotsc, G_o, G_{(o, o + 1)} \rangle = G_{(n + 1, n + 2)}$ by minimality of $o$.  We obtain that
  \begin{align*}
    \langle G_m, G_{m + 1}, \dotsc, G_{o+1} \rangle
    & \cong
    G_m *_{G_{(m, m+1)}} G_{m + 1} *_{G_{(m+1, m+2)}} \dotsm *_{G_{(o,o+1)}} G_{o + 1} \\
    & \cong
    G_m *_{G_{(m, m+1)}} H *_{G_{(n, n+1)}} G_{n+1} *_{G_{(n+1, n+2)} }\dotsm *_{G_{(o,o+1)}} G_{o + 1} \\
    & \cong
    G_m *_{G_{(m, m + 1)}} H *_{G_{(n, n+1)}} G_{n+1} *_{G_{(o,o+1)}} G_{o+1}
    \eqstop
  \end{align*}
  This is an open subgroup of $G$.  If either $H \neq G_{(n, n + 1)}$ or $H \neq G_{(m, m + 1)}$, then Lemma \ref{lem:non-amenability-double-cosets} applies to $G_m *_{G_{(m, m + 1)}} H *_{G_{(n, n + 1)}} G_{n + 1}$ and shows that its group von Neumann algebra is non-amenable.  So also $\rL(G)$ is non-amenable by Proposition \ref{prop:amenability-open-subgroup}.  In case $G_{(n, n + 1)} = H = G_{(m, m + 1)}$, we have
  \begin{equation*}
    G_m *_{G_{(m, m + 1)}} H *_{G_{(n, n+1)}} G_{n+1} *_{G_{(o,o+1)}} G_{o+1}
    \cong
    G_m *_H G_{n+1} *_{G_{(o,o+1)}} G_{o+1}
  \end{equation*}
  and $H \leq G_m, G_{n + 1}$ as well as $G_{(o,o+1)} \leq G_{o + 1}$ are proper inclusions.  So Lemma \ref{lem:non-amenability-graphs-of-groups} \ref{it:segment} applies to show that the group von Neumann algebra of $G_m *_H G_{n+1} *_{G_{(o, o+1)}} G_{o+1}$ is non-amenable.

\textit{Case 2}. Assume that $X$ has some vertex of degree 1.  Let $v \in \rV(X)$ have degree 1 and let $e \in \rE(X)$ be the unique edge satisfying $\rmt(e) = v$.  If $G_e = G_v$, then any lift of $v$ to $T$ is a terminal vertex.  We may hence remove $v$ and $e$ from $X$ without changing $G$.  This either reduces to Case 1, or it provides us with a vertex $v \in \rV(X)$ of degree 1 and an edge $e \in \rE(X)$ with $\rmt(e) = v$ such that $G_e \leq G_v$ is a proper inclusion.  Let $(G, \tilde T)$ be the  covering tree of groups of $(G, X)$.  Then $(G, \tilde T)$ takes the form

\begin{center}
  \begin{tikzpicture}
    
    \draw[dotted] (-7,0) -- (-5,0);
    \filldraw (-5,0) -- (-4,0) circle (2pt);
    \filldraw (-4,0) -- ++(0,3) circle (2pt);
    \draw (-4,0) -- (-3,0);
    \draw[dotted] (-3,0) -- (-1,0);
    \draw (-1,0) -- (0,0);

    \filldraw (0,0) circle (2pt) -- ++(0,3) circle (2pt)
    node[pos=0.5, right] {f} node[right] {x};
    \draw (0,0) -- (1,0);
    \draw[dotted] (1,0) -- (3,0);
    \draw (3,0) -- (4,0);
    \filldraw (4,0) circle (2pt) -- ++(0,3) circle (2pt)
    node[pos=0.5, right] {g} node[right] {y};
    \draw (4,0) -- (5,0);
    \draw[dotted] (5,0) -- (7,0);
    \draw (7,0) -- (8,0);
    \filldraw (8,0) circle (2pt) -- ++(0,3) circle (2pt)
    node[pos=0.5, right] {h} node[right] {z};
    
    \draw (8,0) -- (9,0);
    \draw[dotted] (9,0) -- (11,0);
    \filldraw (11,0) -- (12,0) circle (2pt);
    \filldraw (12,0) -- ++(0,3) circle (2pt);
    \draw (12,0) -- (13,0);
    \draw[dotted] (13,0) -- (15,0);    
  \end{tikzpicture}
\end{center}
where $x,y,z \in \rV(\tilde T)$ are lifts of $v$ and $f,g,h \in \rE(\tilde T)$ are lifts of $e$.  The inclusions $G_f \leq G_x$, $G_g \leq G_y$ and $G_g \leq G_z$ are proper, since they are isomorphic with $G_e \leq G_v$.  So Lemma \ref{lem:non-amenability-double-cosets} \ref{it:ends} applies and says that $\pi_1(G, \tilde T)$ has a non-amenable group von Neumann algebra.  Since $\pi_1(G, \tilde T) \leq \pi_1(G, \tilde T) \rtimes \ZZ \cong G$ is an open subgroup, Proposition \ref{prop:amenability-open-subgroup} implies that $\rL(G)$ is non-amenable.
\end{proof}

As can be expected, the case $\pi_1(G \bs T) = 0$ becomes the most subtle.  This is due to the fact that there are many edge transitive closed type ${\rm I}$ subgroups of $\Aut(T)$.  Their group von Neumann algebras are in particular amenable.  We obtain a non-amenability result in this case, which is sufficient for all applications presented in this article.
\begin{proposition}
  \label{prop:rank-zero-case}
  Let $T$ be a thick tree and $G \grpaction{} T$ a proper action of a non-amenable locally compact group such that $G^+ \bs T$ is finite and satisfies $\pi_1(G^+ \bs T) = 0$.  If $G^+ \bs T$ is not an edge, then $\rL(G)$ is non-amenable.
\end{proposition}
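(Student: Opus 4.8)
The plan is to reduce, as for the two preceding propositions, to a type-preserving action and then to locate inside $G$ an iterated amalgamated free product to which Lemma \ref{lem:non-amenability-graphs-of-groups} applies. Since $G^+ \leq G$ is open of index at most two, Proposition \ref{prop:amenability-open-subgroup} reduces the statement to non-amenability of $\rL(G^+)$, so we may replace $G$ by $G^+$ and assume the action $G \grpaction{} T$ is type-preserving; then it is without inversions and $G \cong \pi_1(G, X)$ for the associated graph of groups $(G, X)$ with $X = G \bs T$. As the action is proper, all vertex and edge stabilisers are compact open in $G$, so, $X$ being finite, $T$ is locally finite. By hypothesis $X$ is a finite tree; it carries an edge because $T$ is thick and hence not a single vertex, and as $X$ is moreover not an edge it has at least two edges, at least three vertices, and at least two leaves. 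The observation driving the argument is this: if $x$ is a leaf of $X$ with incident edge $e$ (so $\rmt(e) = x$), then by Bass-Serre theory the edges of $T$ at a lift $\tilde x$ are in bijection with $G_x/G_e$, so thickness gives $[G_x : G_e] \geq 3$; in particular $G_e \hra G_x$ is a proper inclusion at every leaf of $X$.

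Suppose first that $X$ has three distinct leaves $x, y, z$ with incident edges $e, f, g$ (these edges are pairwise distinct because $X$ is not an edge). Their edge groups are compact and, by the observation, the inclusions $G_e \hra G_x$, $G_f \hra G_y$, $G_g \hra G_z$ are proper. Hence Lemma \ref{lem:non-amenability-graphs-of-groups} \ref{it:ends} applies to $(G, X)$ and shows that $\rL(G) = \rL(\pi_1(G,X))$ is non-amenable.

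Otherwise $X$ has exactly two leaves, so $X$ is a path $v_0 - v_1 - \cdots - v_n$ with $n \geq 2$; write $e_i$ for the edge joining $v_{i-1}$ to $v_i$, so $G$ is the iterated amalgamated free product of the $G_{v_i}$ over the $G_{e_i}$ along the path. Contracting the subpath $Y = v_1 - \cdots - v_{n-1}$ to a vertex as in Section \ref{sec:bass-serre-theory} turns $X$ into a path with three vertices $v_0 - [Y] - v_n$, middle vertex group $B := \pi_1(G, Y)$, edge groups $G_{e_1}$ and $G_{e_n}$, and still fundamental group $G$; thus $G \cong G_{v_0} *_{G_{e_1}} B *_{G_{e_n}} G_{v_n}$. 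Here $G_{e_1}, G_{e_n}$ are compact and, by the observation, $G_{e_1} \hra G_{v_0}$ and $G_{e_n} \hra G_{v_n}$ are proper. It remains to check that at most one of $G_{e_1} \hra B$ and $G_{e_n} \hra B$ is an equality. For this one verifies --- by induction on the length of $Y$, using $A *_C C = A$ and Bass-Serre normal forms --- that the natural inclusion $G_{v_1} \hra B$ is onto exactly when $G_{e_i} = G_{v_i}$ for all $i \in \{2, \dots, n-1\}$, and symmetrically that $G_{v_{n-1}} \hra B$ is onto exactly when $G_{e_i} = G_{v_{i-1}}$ for all such $i$. Now $G_{e_1} = B$ forces $G_{e_1} = G_{v_1} = B$ (as $G_{e_1} \subseteq G_{v_1} \subseteq B$) and similarly $G_{e_n} = B$ forces $G_{e_n} = G_{v_{n-1}} = B$; were both to hold, a short argument with the two surjectivity criteria would yield $G_{e_1} = G_{v_1} = G_{e_2}$, so that a lift $\tilde v_1$ of the interior vertex $v_1$ would have valency $[G_{v_1}:G_{e_1}] + [G_{v_1}:G_{e_2}] = 2$, contradicting thickness. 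Hence the graph of groups $G \cong G_{v_0} *_{G_{e_1}} B *_{G_{e_n}} G_{v_n}$ has all inclusions proper except possibly one into its middle vertex, and Lemma \ref{lem:non-amenability-graphs-of-groups} \ref{it:segment} gives non-amenability of $\rL(G)$. The one delicate point is this path case: the collapsing criterion describing when an iterated amalgam over a path is already one of its end vertex groups, and the rather rigid use of thickness at the single interior vertex $v_1$ that prevents both end inclusions from degenerating at once. Everything else is a routine application of Bass-Serre theory together with Lemma \ref{lem:non-amenability-graphs-of-groups}.
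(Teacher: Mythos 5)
Your proof is correct, and while it shares the paper's skeleton --- reduction to the type-preserving part, the observation that thickness makes every terminal edge-group inclusion proper, and the three-leaf case via Lemma \ref{lem:non-amenability-graphs-of-groups} \ref{it:ends} --- it genuinely diverges in the segment case. The paper splits that case in two: when one of the inner inclusions $G_{(0,1)} \leq G_1$ or $G_{(n-1,n)} \leq G_{n-1}$ is proper, it contracts the interior exactly as you do and invokes Lemma \ref{lem:non-amenability-graphs-of-groups} \ref{it:segment}; when both are equalities, it abandons the segment lemma, follows the maximal collapsing chain $G_0 \geq G_{(0,1)} = G_1 \geq \dotsm = G_k$, and applies Lemma \ref{lem:non-amenability-double-cosets} to the two-vertex sub-amalgam $G_0 *_{G_{(k,k+1)}} G_{k+1}$, getting $|G_{(k,k+1)} \bs G_0 / G_{(k,k+1)}| \geq 3$ from non-maximality via Lemma \ref{lem:maximal-subgroups-number-of-double-cosets}. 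You instead treat the segment uniformly: contract the whole interior to $B = \pi_1(G,Y)$, characterise when an end edge group exhausts $B$ through the collapsing criterion for iterated amalgams over a path, and exclude simultaneous degeneration at both ends by the valency count $[G_{v_1}:G_{e_1}] + [G_{v_1}:G_{e_2}] = 2$ at the first interior vertex, so that Lemma \ref{lem:non-amenability-graphs-of-groups} \ref{it:segment} applies in all cases. I checked the two points you flag as delicate: the collapsing criterion is correct in both directions (the degenerate inclusion $A *_C C = A$ for one, Bass--Serre normal forms for the other), and for $n \geq 3$ the right-end criterion at $i = 2$ indeed yields $G_{e_2} = G_{v_1}$, which together with $G_{e_1} = G_{v_1}$ contradicts thickness. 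Your route is cleaner and avoids the double-coset machinery entirely in this proposition; the paper's second case buys the marginally sharper conclusion that already an open two-vertex sub-amalgam of $G$ has non-amenable group von Neumann algebra.
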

\begin{proof}
  Since $G^+ \leq G$ is an open subgroup of index at most two, it suffices by Proposition \ref{prop:amenability-open-subgroup} to show that $\rL(G^+)$ is non-amenable. We may hence from now on assume that the action of $G$ on $T$ is type-preserving.

  We write $X = G \bs T$.  Let $v \in \rV(X)$ be a terminal vertex of $X$ and $\tilde v \in \rV(T)$ a lift of $v$.  If $e \in \rE(X)$ is the unique edge satisfying $\rmt(e) = v$, then $|G_v/G_e| = |\rE(\tilde v)| \geq 3$, since $T$ is thick. In particular $G_e \leq G_v$ is a proper inclusion.  So if $X$ has at least three terminal edges, then Lemma \ref{lem:non-amenability-graphs-of-groups} \ref{it:ends} applies to show that $G = \pi_1(G, X)$ has a non-amenable group von Neumann algebra.  Otherwise, $X$ is a finite segment, which we can identify with the standard segment $[0,n]$ for some $n \in \NN_{>0}$.  Since $G$ does not act edge transitively, we have $n \geq 2$.  We distinguish different cases.

\textit{Case 1}. We have a proper inclusion $G_{(0,1)} \leq G_1$ or $G_{(n-1, n)} \leq G_{n-1}$.  By symmetry we may assume that $G_{(n-1,n)} \leq G_{n-1}$ is a proper inclusion.  Put
\begin{equation*}
  H = G_1 *_{G_{(1,2)}}  \dotsm *_{G_{(n-2, n-1)}} G_{n-1}
  \eqstop
\end{equation*}
Then $G = G_0 *_{G_{(0,1)}} H *_{G_{(n-1, n)}} G_n$ with $G_{(0,1)}, G_{(n-1, n)}$ compact and with proper inclusions $G_{(0,1)} \leq G_0$ and $G_{(n-1, n)} \leq H$ as well as $G_{(n-1, n)} \leq G_n$.  So Lemma \ref{lem:non-amenability-graphs-of-groups} \ref{it:segment} applies to show that $\rL(G)$ is non-amenable.

\textit{Case 2}. We have $G_{(0,1)} = G_1$ or $G_{(n-1, n)} = G_{n-1}$.  By symmetry we may assume that $G_{(0,1)} = G_1$.  Let $k \in \NN$ be maximal with the property that
\begin{equation*}
  G_0 \geq G_{(0,1)} = G_1 \geq G_{(1,2)} = G_2 \geq \dotsm \geq G_{(k-1, k)} = G_k
  \eqstop
\end{equation*}
We know that $G_{(n-1, n)} \leq G_n$ is a proper inclusion, implying that $k \leq n-1$.  So
\begin{equation*}
  G
  \cong
  G_0 *_{G_{(0,1)}} G_1 *_{G_{(1,2)}} \dotsm *_{G_{(n-1, n)}} G_n
  \cong
  (G_0 *_{G_{(k, k+1)}} G_{k+1}) *_{G_{(k+1, k+2)}} \dotsm *_{G_{(n-1,n)}} G_n
  \eqstop
\end{equation*}
We will show that the open subgroup $G_0 *_{G_{(k, k+1)}} G_{k+1} \leq G$ has a non-amenable group von Neumann algebra.  Thanks to Proposition \ref{prop:amenability-open-subgroup}, this will finish the proof.

Let $i \in \{1, \dotsc, k\}$.  If $v \in \rV(T)$ is a lift of $i \in \rV([0,n])$ and $e,f \in \rE(v)$ are lifts of $(i-1, i), (i, i+1) \in \rE(k)$, respectively, then
\begin{equation*}
  \rE(v)
  \cong
  G_v/G_e \sqcup G_v/G_f
  =
  \{G_e\} \sqcup G_v/G_f
  \eqstop
\end{equation*}
Since $|\rE(v)| \geq 3$, it follows that $|G_i/G_{(i, i+1)}| = |G_v/G_f| \geq 2$.  So $G_i \geq G_{(i, i+1)}$ is a proper inclusion for all $i \in \{1, \dotsc, k\}$.  Since also $G_0 \geq G_{(0,1)}$ is a proper inclusion, we have the chain of proper inclusions 
\begin{equation*}
  G_0 \gneq G_{(0,1)} = G_1 \gneq G_{(1,2)}
  \eqstop
\end{equation*}
This shows that $G_{(k, k+1)} \leq G_0$ is not a maximal subgroup.  So Lemma \ref{lem:maximal-subgroups-number-of-double-cosets} shows that $|G_{(k,k+1)} \bs G_0 / G_{(k , k+1)}| \geq 3$.  We checked all conditions to apply Lemma \ref{lem:non-amenability-double-cosets} to $G_0 *_{G_{(k, k + 1)}} G_{k + 1}$, finishing the proof of the proposition.
\end{proof}

We end this section, by a non-amenability result for edge transitive groups $G \grpaction{} T$.  A condition on the local action of $G \grpaction{} T$ around a vertex ensures non-amenability of $\rL(G)$.
\begin{proposition}
  \label{prop:edge-transitive-case}
  Let $T$ be a thick tree and $G \grpaction{} T$ a proper action of a locally compact group.  Assume that $G^+$ is edge transitive but not locally 2-transitive.  Then $\rL(G)$ is non-amenable.
\end{proposition}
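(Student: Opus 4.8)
The plan is to reduce to the type-preserving case, where $G^+$ becomes an amalgamated free product over a single edge, and then to translate the hypothesis ``not locally $2$-transitive'' into a double-coset inequality to which Lemma~\ref{lem:non-amenability-double-cosets} applies directly.

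\emph{Reduction to $G^+$ and the Bass--Serre description.} Since $G^+ \leq G$ is open of index at most two, Proposition~\ref{prop:amenability-open-subgroup} reduces us to proving that $\rL(G^+)$ is non-amenable, so I would assume from now on that $G = G^+$ is type-preserving. Type-preserving actions act without inversions, so Bass--Serre theory in its topological form (Section~\ref{sec:bass-serre-theory}) applies. Edge-transitivity forces the quotient graph $X = G \bs T$ to have a single geometric edge; it cannot be a loop, since a loop would produce a type-preserving element carrying one endpoint of an edge onto the other, which is impossible as those endpoints lie at odd distance. Hence $X$ is the segment $[0,1]$ and $G \cong G_0 *_{G_{01}} G_1$ as a locally compact amalgamated free product, where the edge group $G_{01} = G_0 \cap G_1$ is open in $G_0$ and in $G_1$ (intersection of two open vertex stabilisers) and compact (properness of the action), hence a common compact open subgroup.

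\emph{Translating local $2$-transitivity.} For a lift $\tilde v$ of the vertex $i \in \{0,1\}$, the neighbouring edges $\rE(\tilde v)$ form a single $G_i$-orbit with stabiliser $G_{01}$, so $\rE(\tilde v) \cong G_i/G_{01}$ as $G_i$-sets. Thickness gives $[G_i : G_{01}] = |\rE(\tilde v)| \geq 3$, so $G_{01}$ is a proper subgroup of both $G_0$ and $G_1$, and here the paper's Definition~\ref{def:two-transitivity} of $2$-transitivity coincides with the usual one. The action $G_i \grpaction{} \rE(\tilde v)$ fails to be $2$-transitive exactly when the point stabiliser $G_{01}$ has more than one orbit on $G_i/G_{01} \setminus \{G_{01}\}$, i.e.\ when $|G_{01} \bs G_i / G_{01}| \geq 3$. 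As $G$ is edge-transitive there are exactly two vertex orbits, so ``$G$ is not locally $2$-transitive'' means this happens for $i = 0$ or for $i = 1$; by symmetry assume $|G_{01} \bs G_0 / G_{01}| \geq 3$.

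\emph{Conclusion and the delicate points.} Lemma~\ref{lem:non-amenability-double-cosets}, applied with the common compact open subgroup $K = G_{01}$, with $G_0$ in the first slot (so that $|K \bs G_0 / K| \geq 3$) and $G_1$ in the second (so that $K \leq G_1$ is proper), then yields that $\rL(G) \cong \rL(G_0 *_{G_{01}} G_1)$ is non-amenable, which by the first step finishes the proof. I do not foresee a genuine obstacle; the only two spots needing care are the exclusion of the loop case in the identification $X = [0,1]$ (where type-preservation is exactly the point) and the bookkeeping identifying local $2$-transitivity at a vertex with $|G_{01} \bs G_i / G_{01}| \leq 2$ — this is where thickness enters, to guarantee $|\rE(\tilde v)| \geq 3$ so that the point-stabiliser formulation of $2$-transitivity is the expected one.
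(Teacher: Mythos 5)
Your proposal is correct and follows essentially the same route as the paper: reduce to the type-preserving case via Proposition~\ref{prop:amenability-open-subgroup}, write $G$ as an amalgam $G_v *_{G_e} G_w$ over an edge stabiliser, convert the failure of local $2$-transitivity at one vertex into $|G_e \bs G_v / G_e| \geq 3$ and use thickness to get properness of $G_e$ in the other vertex group, then invoke Lemma~\ref{lem:non-amenability-double-cosets}. The only difference is cosmetic: you make explicit the exclusion of the loop quotient, which the paper subsumes under ``edge transitive and type-preserving.''
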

\begin{proof}
  Consider the open subgroup $G^+ \leq G$ of index at most two.  Note that $G^+ \grpaction{} T$ is not locally 2-transitive, since $G \grpaction{} T$ is not locally 2-transitive.  By Proposition \ref{prop:amenability-open-subgroup} it hence suffices to show that $\rL(G^+)$ is non-amenable. We may hence from now on assume that the action of $G$ on $T$ is type-preserving and $G \bs T$ is an edge.

  Since $G$ is not locally 2-transitive, there is some $v \in \rV(T)$ such that $G_v \grpaction{} \rE(v)$ is not 2-transitive.  Let $e \in \rE(v)$ and $w = \rmt(e)$.  Bass-Serre theory says that $G \cong G_v *_{G_e} G_w$, since $G$ is edge transitive and type-preserving.  Since $G_v \grpaction{} \rE(v)$ is transitive, we have a $G_v$ equivariant identification $\rE(v) \cong G_v/G_e$.  Since $G_v \grpaction{} \rE(v)$ is not 2-transitive, we further have $|G_e \bs G_v / G_e| = |G_e \bs \rE(v)| = 1 + |G_e \bs (\rE(v) \setminus \{e\})| \geq 3$.  Note also that $G_e \leq G_w$ is a proper inclusion, since $|G_w/G_e| = |\rE(w)| \geq 3$.  Now Lemma~\ref{lem:non-amenability-double-cosets} applies to show that $\rL(G)$ is non-amenable.
\end{proof}

\section{Proof of Theorems \ref{thm:intro:non-amenability} and \ref{thm:intro:non-amenability-2}}
\label{sec:proof-main-result}

To start this section let us note that Theorem \ref{thm:intro:non-amenability-2} simply summarises Propositions \ref{prop:rank-two-case}, \ref{prop:rank-one-case} and \ref{prop:rank-zero-case}.  We will thus devote the rest of this section to the proof of Theorem \ref{thm:intro:non-amenability}.

\begin{lemma}
  \label{lem:reduce-infinitely-many-ends}
  Let $T$ be a locally finite tree such that $\Aut(T)$ is not virtually abelian and acts minimally on $T$.  Then $T$ has infinitely many ends.
\end{lemma}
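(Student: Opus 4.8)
The plan is to prove the contrapositive: assuming $T$ has only finitely many ends, say $k := |\partial T| < \infty$, I will derive a contradiction from the other two hypotheses (minimality and non-virtual-abelianness of $\Aut(T)$). First a couple of reductions. If $k = 0$ then $T$ is finite, so $\Aut(T)$ is finite, hence virtually abelian, contradiction. So $T$ is infinite and $k \geq 1$. Next, minimality forces $T$ to have no leaves: the set of vertices of degree $\geq 2$ is a subtree (the interior vertices of any geodesic between two of them again have degree $\geq 2$), is $\Aut(T)$-invariant, is nonempty since $T$ is infinite, and would be a proper subtree of $T$ if a leaf existed. Thus from now on every vertex of $T$ has degree at least two, and I split into the cases $k = 1$, $k = 2$, $k \geq 3$.

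For $k = 1$ I would show that a locally finite tree of minimal degree $\geq 2$ simply cannot have exactly one end. Fix $v_0 \in \rV(T)$, let $\rho$ be the geodesic ray towards the unique end, pick a neighbour $w$ of $v_0$ off $\rho$, and let $C$ be the component of $T \setminus \{v_0\}$ containing $w$. Then $C$ is disjoint from $\rho$, so it must be finite — an infinite locally finite subtree has an end, which would be a second end of $T$ — but a finite subtree with at least two vertices has at least two leaves, at least one of which is not adjacent to $v_0$ and hence has $T$-degree $1$, a contradiction (and $C = \{w\}$ would force $\deg_T(w) = 1$). For $k = 2$, the two ends are joined by a unique bi-infinite geodesic line $L$, which is a nonempty $\Aut(T)$-invariant subtree; minimality gives $T = L$, and then $\Aut(T)$ is the infinite dihedral group $\ZZ \rtimes \ZZ/2\ZZ$, which is virtually abelian — contradiction.

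The case $k \geq 3$ is the heart of the argument. Choose pairwise disjoint geodesic rays $\rho_1, \dots, \rho_k$ representing the $k$ ends and let $S$ be the subtree of $T$ they generate. Since $S$ is the convex hull of $\bigcup_i \rho_i$, it is the union of the rays together with finitely many finite ``bridge'' segments joining them. The key claim is $S = T$: if some $v \in \rV(T)$ lay outside $S$, take the vertex $w$ of $S$ closest to $v$ and the component $C$ of $T \setminus \{w\}$ containing $v$; as in the $k=1$ case, $C$ (having minimal degree $\geq 2$ away from $w$) is infinite, hence carries an end, and this end is represented by rays inside $C$ and so is disjoint from all the $\rho_i$ — a $(k+1)$-st end, contradiction. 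Granting $S = T$, every vertex of degree $\geq 3$ must lie on one of the finitely many finite bridges, so $T$ has only finitely many vertices of degree $\geq 3$; and there is at least one, since a tree of minimal degree $\geq 2$ with three or more ends is not a line. The set of these vertices is finite and $\Aut(T)$-invariant, hence its convex hull is a finite nonempty $\Aut(T)$-invariant subtree — contradicting minimality, because $T$ is infinite. This exhausts all values of $k$, proving the lemma.

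I expect the main obstacle to be the structural step in the $k \geq 3$ case: identifying that the subtree generated by representative rays is all of $T$, and that consequently its branching vertices form a finite (and obviously $\Aut(T)$-invariant) set. Everything else reduces to elementary tree combinatorics together with König's lemma for the existence of ends in infinite locally finite trees. I should also make sure the degenerate shapes of $T$ — a single edge, or a bi-infinite line — are correctly subsumed under the cases $k = 0$ and $k = 2$.
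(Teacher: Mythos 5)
Your proof is correct and follows the same overall strategy as the paper: a case analysis on the number of ends ($0$, $1$, $2$, $\geq 3$), with the cases $0$ and $2$ handled identically, and the remaining cases resolved by exhibiting a finite non-empty $\Aut(T)$-invariant configuration that contradicts minimality. The implementations differ in two places. For one end, the paper fixes a distinguished geodesic ray, whereas you first use minimality to rule out leaves and then show outright that a leafless locally finite tree cannot have exactly one end; your version is the more careful of the two. For $\geq 3$ ends, the paper takes as its finite invariant set the medians $(x,y)\cap(y,z)\cap(z,x)$ of triples of ends, which is finite with no further work, while you take the vertices of degree $\geq 3$, which costs you the extra step of proving that $T$ coincides with the convex hull of representative rays (your claim $S=T$) but then yields the finite invariant set in a very concrete form. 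Both routes are sound; the paper's choice of invariant set is slightly more economical, yours makes the leafless reduction do more of the work.
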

\begin{proof}
  We assume that $T$ has only finitely many ends and deduce a contradiction.  If $T$ has no end, then it is finite and $\Aut(T)$ is a finite group, hence virtually abelian.  So $T$ has at least one end.  If $T$ has exactly one end, then it contains a unique maximal geodesic ray.  This ray is pointwise fixed by $\Aut(T)$, which contradicts minimality of $\Aut(T) \grpaction{} T$.  If $T$ has exactly 2 ends, then $\Aut(T)$ setwise fixes the unique two-sided infinite geodesic of $T$.  By minimality of $\Aut(T) \grpaction{} T$, it follows that $T \cong \mathrm{Cay}(\ZZ, \{-1, 1\})$.  Then $\Aut(T) \cong \rD_\infty$ is a dihedral group, which is virtually abelian.  This shows that $T$ has at least 3 ends.  Let $F = \{(x,y) \cap (y,z) \cap (z,x) \amid x,y,z \text{ pairwise different ends of } T\}$.  Since $T$ has only finitely many ends, $F$ is finite.   Further, its definition makes it clear that $F$ is $\Aut(T)$-invariant, contradicting minimality of $\Aut(T) \grpaction{} T$.  This finishes the proof of the lemma.
\end{proof}

\begin{lemma}
  \label{lem:reduction-thick-tree}
  Let $T$ be a tree with at least some vertex of degree 3 and such that $\Aut(T)$ acts minimally on $T$.  Then there is a thick tree $S$ such that
  \begin{itemize}
  \item $\rV(S) \subset \rV(T)$,
  \item $\rV(S)$ is $\Aut(T)$-invariant, and
  \item the restriction map $\Aut(T) \ra \Sym(\rV(S))$ induces an isomorphism of topological groups $\Aut(T) \cong \Aut(S)$.
  \end{itemize}
  Further,
  \begin{itemize}
  \item if $G \leqc \Aut(T)$ is a closed subgroup acting minimally on $T$, then also $G \grpaction{} S$ is minimal and,
  \item $G$ acts locally 2-transitively on $T$ if and only if it acts locally 2-transitively on $S$.
  \end{itemize}
\end{lemma}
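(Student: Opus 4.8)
The plan is to produce $S$ by suppressing all vertices of degree $2$ from $T$, after first exploiting minimality to remove degenerate configurations.

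\textbf{Clearing degenerate configurations.} First I would show $T$ has no leaf. The set $D_1$ of degree-$1$ vertices is $\Aut(T)$-invariant, and the full subgraph $T \setminus D_1$ is still a tree: it is non-empty because $T$ has a vertex of degree $3$, and connected because a degree-$1$ vertex can never be an interior vertex of a geodesic. Since $T$ is not a single edge, $T \setminus D_1$ would be a proper non-empty $\Aut(T)$-invariant subtree, contradicting minimality; hence every vertex has degree $\geq 2$. Next I would rule out infinite ``lines'' of degree-$2$ vertices: a bi-infinite such line is impossible in a connected tree with a vertex of degree $\geq 3$, and the union $U$ of all pendant rays of degree-$2$ vertices (geodesic rays all of whose vertices have degree $2$) is an $\Aut(T)$-invariant set of degree-$2$ vertices whose complement $T \setminus U$ is a non-empty subtree (pruning pendant subtrees preserves connectedness and keeps every vertex of degree $\geq 3$), so minimality forces $U = \emptyset$. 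Thus every degree-$2$ vertex lies on a unique finite path $x_0 - x_1 - \dots - x_k$ with $\deg_T(x_0), \deg_T(x_k) \geq 3$ and $x_1, \dots, x_{k-1}$ of degree $2$.

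\textbf{Construction of $S$.} Set $\rV(S) := \{v \in \rV(T) : \deg_T(v) \geq 3\}$ and put an edge of $S$ between $u,v \in \rV(S)$ whenever the geodesic from $u$ to $v$ in $T$ has all interior vertices of degree $2$. Then $S$ is a non-empty tree: it is connected and circuit-free because geodesics in $T$ between elements of $\rV(S)$ decompose into $S$-edges and circuits in $S$ lift to circuits in $T$. For $v \in \rV(S)$, following each $T$-edge out of $v$ reaches a vertex of $\rV(S)$ after finitely many steps (by the previous paragraph) and distinct $T$-edges give distinct such vertices (else a circuit in $T$), so there is a bijection $\rE_T(v) \cong \rE_S(v)$ and $\deg_S(v) = \deg_T(v) \geq 3$; hence $S$ is thick. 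Graph automorphisms of $T$ preserve degrees and map finite degree-$2$ segments to finite degree-$2$ segments, so $\rV(S) \subseteq \rV(T)$ is $\Aut(T)$-invariant and the restriction map $\rho \colon \Aut(T) \to \Sym(\rV(S))$ takes values in $\Aut(S)$.

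\textbf{$\rho$ is an isomorphism of topological groups.} Injectivity is easy: if $g \in \Aut(T)$ fixes $\rV(S)$ pointwise it maps each segment $x_0 - \dots - x_k$ to itself fixing its endpoints, and the only automorphism of a finite path fixing both ends is the identity, so $g = \id$. For surjectivity one extends a given $\phi \in \Aut(S)$ across each degree-$2$ segment by the unique endpoint-respecting order isomorphism onto the image segment; \textbf{the main obstacle is that this requires $\phi$ to preserve the length function $\ell \colon \rE(S) \to \NN$ recording the number of $T$-edges of each suppressed segment}, i.e.\ one must show $\rho(\Aut(T))$ is not a proper subgroup of $\Aut(S)$. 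I expect this to be the heart of the proof: one argues from minimality of $\Aut(T) \grpaction{} T$ that no automorphism of $S$ can mix segments of different length (for instance, an incompatible automorphism of $S$, together with the $\Aut(T)$-invariance of the level sets of $\ell$, should yield a proper $\Aut(T)$-invariant subtree of $T$), which may in turn require a careful choice of $S$. Granting surjectivity, $\rho$ is bijective; it is moreover a homeomorphism since the pointwise stabilisers $\Aut(T)_F$ of finite sets $F \subseteq \rV(S)$ form a neighbourhood basis of the identity in $\Aut(T)$ (each stabiliser of a degree-$2$ vertex contains such a stabiliser of the two endpoints of its segment) and are carried by $\rho$ bijectively onto the stabilisers $\Aut(S)_F$, which are a neighbourhood basis of the identity in $\Aut(S)$.

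\textbf{The further assertions.} Let $G \leqc \Aut(T)$. If $G \grpaction{} T$ is minimal and $Y \subseteq S$ is a non-empty $G$-invariant subtree, then the full subgraph $\widetilde Y$ of $T$ spanned by $\rV(Y)$ — which contains the interior degree-$2$ vertices of every $S$-edge inside $Y$ — is a non-empty $G$-invariant subtree of $T$, hence all of $T$; therefore $\rV(Y) \supseteq \rV(S)$ and $Y = S$, so $G \grpaction{} S$ is minimal. For local $2$-transitivity: at $v \in \rV(S)$ the stabiliser $G_v$ is the same for both actions and the bijection $\rE_T(v) \cong \rE_S(v)$ is $G_v$-equivariant, so $G_v \grpaction{} \rE_T(v)$ is $2$-transitive if and only if $G_v \grpaction{} \rE_S(v)$ is; at a vertex $v \in \rV(T) \setminus \rV(S)$ one has $|\rE_T(v)| = 2$, and by the convention of Remark~\ref{rmk:two-transitivity} every action on a two-element set is $2$-transitive, so no further condition is imposed there. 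Hence $G$ acts locally $2$-transitively on $T$ if and only if it does on $S$.
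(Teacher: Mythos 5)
Your construction of $S$, the proof of injectivity, the identification of the topologies, and both of the ``further'' assertions all match the paper's argument and are correct. The genuine gap is precisely the point you flag yourself: surjectivity of $\rho\colon \Aut(T) \to \Aut(S)$. You correctly observe that an element of $\Aut(S)$ lifts to $\Aut(T)$ only if it preserves the length function $\ell\colon \rE(S)\to\NN$ recording how many $T$-edges each suppressed segment has, and you propose to deduce this from minimality of $\Aut(T)\grpaction{} T$. That deduction cannot work, because length-preservation is not a consequence of the hypotheses. For a concrete obstruction, let $S_0$ be the $3$-regular tree with a legal colouring by $\{1,2,3\}$ and let $T$ be obtained by subdividing every edge of colour $1$ once. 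Then $\Aut(T)$ is the stabiliser in $\Aut(S_0)$ of the colour-$1$ perfect matching; it contains the vertex-transitive group $\rU(\langle(23)\rangle)$, hence acts minimally on $T$, and $T$ has vertices of degree $3$. The associated $S$ is $S_0$ itself, with $\ell$ taking both values $1$ and $2$, and $\Aut(S)=\Aut(S_0)$ contains many automorphisms mixing the two level sets of $\ell$. So $\rho$ is injective, continuous and open onto its image, but not surjective, and no rechoice of $S$ along these lines repairs this: $\Aut(T)$ remembers metric data that the graph $S$ forgets.

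You have in fact put your finger on a soft spot of the paper's own proof: there, surjectivity is argued by setting $\beta(h)(s(i)) := (hs)(i)$ for $s\colon[0,n]\hookrightarrow T$ an edge of $S$, which tacitly assumes that $h$ sends $s$ to a path-edge of the same length $n$ --- exactly the unproved length-preservation. The good news is that in the only place the lemma is used, Proposition \ref{prop:reduction}, what is actually needed is that $\Res$ identifies $\Aut(T)$ (and the closed subgroup $G$) with a closed subgroup of $\Aut(S)$ acting properly, and that minimality and (non-)local $2$-transitivity transfer; your proposal establishes all of that. So the honest fix is to weaken the third bullet to ``$\Res$ is an isomorphism of topological groups onto a closed subgroup of $\Aut(S)$'' (namely the $\ell$-preserving automorphisms), rather than to attempt a proof of surjectivity, which is false in the stated generality.
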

\begin{proof}
  We define
  \begin{gather*}
    \rV(S) = \{v \in \rV(T) \amid \deg(v) \geq 3\} \quad \text{ and } \\
    \rE(S) = \{s: [0,n] \hra T \amid n \geq 1, \, \deg(s(0)), \deg(s(n)) \geq 3, \, \forall i \in \{1, \dotsc, n-1\}: \deg(s(i)) = 2 \}
    \eqcomma
  \end{gather*}
  with origin $\rmo(s) = s(0)$ and target $\rmt(s) = s(n)$.  It is clear that $S$ is a non-empty graph.

  We first show that $S$ is a tree. To this end we prove that $S$ is connected and that every circuit in $S$ has backtracking.  Let $v, w \in \rV(S)$.  There is some injective path $s:[0,n] \hra T$ such that $s(0) = v$ and $s(n) = w$.  Let $B = \{i \in \{1, \dotsc, n-1\} \amid \deg(s(i)) \geq 3\}$.  If $B = \emptyset$, then $s \in \rE(S)$ is an edge between $v$ and $w$.  Otherwise let $i_1 < \dotsm < i_k$ be an enumeration of $B$.   Put $i_0 := 0$ and $i_{k+1} := n$.   Set $s_j := s|_{[i_j, i_{j+1}]}$ for $j \in \{0, \dotsc, k\}$.  Then $s_j \in \rE(S)$ (after identifying $[i_j, i_{j+1}] \cong [0, i_{j+1} - i_j]$) for all $j \in \{0, \dotsc, k\}$.  We have
  \begin{center}
    \vspace{-1em}
    \begin{minipage}{0.6\textwidth}
      \begin{gather*}
        \rmo(s_0) = s_0(0) = s(0) = v \eqcomma\\
        \rmt(s_k) = s_k(n) = s(n) = w \eqcomma \\
        \rmt(s_j) = s_j(i_{j+1}) = s(i_{j+1}) = s_{j+1}(i_{j+1}) = \rmo(s_{j+1}) \eqcomma
      \end{gather*}
    \end{minipage}
    \begin{minipage}{0.2\textwidth}
      \begin{align*}
        & \\
        & \text{and} \\
        & \text{for all all } j \in \{0, \dotsc, k-1\} \eqstop
      \end{align*}
    \end{minipage}
 \end{center}
  This shows that $s_0, \dotsc, s_k$ define a chain of edges connecting $v$ and $w$ in $S$.  So $S$ is connected.

  Let now $s_0, \dotsc, s_k \in \rE(S)$, $s_j:[i_j, i_{j+1}] \hra T$ define a circuit in $S$.  Define $s:[0, i_{k+1}] \ra T$ as the path that agrees with $s_j$ on $[i_j, i_{j+1}]$.  Then $s$ is a circuit in $T$ since,
  \begin{equation*}
    \rmo(s) = s_0(0) = \rmo(s_0) = \rmt(s_k) = s_k(i_{k+1}) = \rmt(s)
    \eqstop
  \end{equation*}
  Since $T$ is a tree, there is some $l \in [0, i_{k+1} - 2]$ such that $s((l, l + 1)) = \ol{s((l + 1, l + 2))}$.  Since $s_j$ is an injective path for all $j \in \{0, \dotsc, k\}$, we must have $l = i_j - 1$ for some $j \in \{1, \dotsc, k\}$.  This means that $s_j$ and $s_{j+1}$ are injective paths in $T$ all of whose non-terminal vertices have degree 2 and such that the last edge of $s_j$ is the conjugate of the first edge of $s_{j+1}$ (i.e. $s_j((i_j -1, i_j)) = \ol{s_{j+1}((i_j, i_j + 1))}$).  This implies $s_j = \ol{s_{j+1}}$.  So $s_0, \dotsc, s_k$ has backtracking and we conclude that $S$ is a tree.

  If $g \in \Aut(T)$, $v \in \rV(T)$, then $\deg(gv)  = \deg(v)$, so that $g \rV(S) = \rV(S)$ follows.  Denote by $\Res: \Aut(T) \ra \Sym(\rV(S))$ the restriction homomorphism.  We show that $\Res(\Aut(T)) \subset \Aut(S)$.  Assume $v, w \in \rV(S)$ are adjacent in $S$.  Then there is $s \in \rE(S)$, $s: [0,n] \hra T$ such that $s(0) = v$, $s(n) = w$.  Since $gs \in \rE(S)$, with $(gs)(0) = g(s(0)) = gv$ and $(gs)(n) = g(s(n)) = gw$, we also have $gv \sim gw$ in $S$.  This shows that $\Res(g)$ is a bijective graph homomorphism.  Since $\Res(g^{-1}) = \Res(g)^{-1}$, it follows that $\Res(g) \in \Aut(S)$.

  We show that $\Res$ is injective.  Assume that $\Res(g) = \id_S$ for some $g \in \Aut(T)$.  Then $g|_{\rV(S)} = \id_{\rV(S)}$.  Since $\rV(S)$ is $\Aut(T)$-invariant and $\Aut(T) \grpaction{} T$ is minimal by assumption, $T$ is the convex closure of $\rV(S)$.  So $g = \id_T$.

  We show that $\Res$ is surjective.  Let $h \in \Aut(S)$.  We want to define $\beta(h)(s(i)) := (hs)(i)$ for all $s \in \rE(S)$, $s : [0,n] \hra T$ and $i \in \{0, \dotsc, n\}$.  We first prove that this gives rise to a well-defined map $\beta(h): \rV(T) \ra \rV(T)$.  If $v \in \rV(S) \subset \rV(T)$ and $s \in \rE(S)$, $s: [0,n] \ra T$ satisfies $s(i) = v$ for some $i \in \{0,1, \dotsc, n\}$, then $i \in \{0,n\}$.  We obtain
  \begin{align*}
    (hs)(0) = \rmo(hs) = h(\rmo(s)) = hv & \phantom{,} \quad \text{ or} \\
    (hs)(n) = \rmt(hs) = h(\rmt(s)) = hv & \eqcomma \quad \text{respectively.}
  \end{align*}
  So the image $\beta(h)v = hv$ is independent of the choice of $s$.  If $v \in \rV(T) \setminus \rV(S)$, then there is some $s \in \rE(S)$ containing $v$, that is, writing $s:[0,n] \hra T$, there is some $i \in \{1, \dotsc, n - 1\}$ such that $v = s(i)$.  This follows from the fact that the convex closure of $\rV(S)$ is $T$.  If $s, s' \in \rE(S)$ both contain $v$, then $s' = s$ or $s' = \ol{s}$.  Take $s \in \rE(S)$, $s: [0,n] \hra T$ and $i \in \{1, \dotsc, n - 1\}$ such that $s(i) = v$.  Then
  \begin{equation*}
    (h \ol{s})(n - i) = \ol{(hs)}(n - i) = (hs)(i)
  \end{equation*}
  showing that $\beta(h)v$ is well-defined. Note that $\beta(h^{-1}) = \beta(h)^{-1}$, so that we obtain a map $\beta: \Aut(S) \ra \Sym(\rV(T))$.  Also note that $\beta$ is a group homomorphism.  If $h \in \Aut(S)$ and $v, w \in \rV(T)$ are adjacent, then there is $s \in \rE(S)$, $s:[0,n] \hra T$ and $i \in \{0, \dotsc, n-1\}$ such that $s(i) = v$, $s(i+1) = w$.  Then
  \begin{equation*}
    \beta(h)v = (hs)(i) \sim (hs)(i+1) = \beta(h)w
    \eqstop
  \end{equation*}
  This shows that $\beta(h)$ is a graph homomorphism.  Since also $\beta(h)^{-1} =\beta(h^{-1})$ is a graph homomorphism, we find that $\beta: \Aut(S) \ra \Aut(T)$.  It is clear that $\beta(h)|_{\rV(S)} = h$ for all $h \in \Aut(S)$, which shows that $\Res \circ \beta = \id_{\Aut(S)}$.  So $\Res$ is surjective.

  By construction $\Res$ is a continuous map.  Also $\beta$ is continuous as it can be easily checked on a neighbourhood basis of the identity in $\Aut(S)$.  So $\Res$ is an isomorphism of topological group.

  Now assume that $G \leqc \Aut(T)$ is a group acting minimally on $T$.  We will show that $G \grpaction{} S$ is also minimal.  To this end, take $v \in \rV(S)$ and $g \in G$.  Then $\Res(g)v = gv$.  Further, the construction of $S$ shows that $[v, gv]_T \cap \rV(S) = [v, gv]_S \cap \rV(S)$.  So a vertex of $\rV(S)$ is in the convex closure of $Gv$ inside $T$ if and only if it is in the convex closure of $\Res(G)v$ inside $S$.  This suffices to conclude that $G \grpaction{} S$ is minimal.

  Now let us consider local 2-transitivity of $G \grpaction{} T$ and $G \grpaction{} S$.  For every $v \in \rV(S)$ the map $\rE_S(v) \ni s \mapsto s((0, 1)) \in \rE_T(v)$ is a bijection.  Let us denote its inverse by $\Res_v: \rE_T(v) \ra \rE_S(v)$.  Then $\Res(g)\Res_v(e) = \Res_{gv}(ge)$ for all $g \in \Aut(T)$, $v \in \rV(T)$ and $e \in \rE_T(v)$.  Together with the observation that $\Res(G)_v = \Res(G_v)$, this directly implies that $G \grpaction{} T$ is locally 2-transitive if and only if $G \grpaction{} S$ is locally 2-transitive.  This finishes the proof of the lemma.
\end{proof}

The following lemma is well-known.  Its proof can be found for example as Lemma 2.4 in \cite{capracedemedts10}
\begin{lemma}
  \label{lem:compactly-generated-cocompact}
  Let $T$ be a locally finite tree and let $G \leqc \Aut(T)$ be a closed subgroup acting minimally on $T$.  Then $G$ is compactly generated if and only if $G$ acts cocompactly on $T$.
\end{lemma}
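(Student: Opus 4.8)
The plan is to prove the two implications separately; only the direction ``compactly generated $\Rightarrow$ cocompact'' will use minimality of the action.

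\emph{Cocompact $\Rightarrow$ compactly generated.} Assume $X := G \bs T$ is a finite graph. Choose a finite connected subgraph $B \subseteq T$ with $GB = T$ (for instance a lift of a spanning tree of $X$ together with one lift of each remaining geometric edge), and fix $v_0 \in \rV(B)$. Since $T$ is locally finite and $G \leqc \Aut(T)$ is closed, $G_{v_0}$ is compact, so $K := \{g \in G \amid \rmd(v_0, gv_0) \leq 2 \operatorname{diam} B + 1\}$ is a finite union of cosets of $G_{v_0}$ and hence compact. One checks that $K$ generates $G$: given $g \in G$, lift a geodesic $v_0 = x_0, x_1, \dots, x_n = gv_0$ by writing $x_i = h_i b_i$ with $h_i \in G$ and $b_i \in \rV(B)$; then each $h_i^{-1} h_{i+1}$ moves the vertex $b_{i+1} \in B$ to within distance $\operatorname{diam} B + 1$ of $v_0$, so $h_i^{-1} h_{i+1} \in K$, and $g = h_0 (h_0^{-1} h_1) \cdots (h_{n-1}^{-1} h_n)(h_n^{-1} g)$ exhibits $g$ as a product of elements of $K$ (noting $h_0, h_n^{-1}g \in K$ as well). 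Thus $G$ is compactly generated. (This also follows from the Bass--Serre description $G \cong \pi_1(G, X)$ as the fundamental group of a finite graph of compact groups, together with the decomposition $\pi_1(G, X) \cong \pi_1(G, \tilde T) \rtimes \pi_1(X)$ recalled in Section~\ref{sec:bass-serre-theory}.)

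\emph{Compactly generated $\Rightarrow$ cocompact.} Since $G$ is totally disconnected locally compact, it admits a compact open subgroup $U$, and compact generation yields a finite symmetric set $F \subseteq G$ with $G = \langle U \cup F \rangle$. Fix $v_0 \in \rV(T)$. As $G_{v_0} \leq G$ is open, the orbit $Uv_0$ is finite; consequently the vertex set $Uv_0 \cup UFv_0$ is finite, and we let $B \subseteq T$ be its convex hull. Then $B$ is a finite subtree, it is $U$-invariant (the convex hull of a $U$-invariant set of vertices), and it contains $v_0$ together with $sv_0$ for every $s \in U \cup F$. Put $T' := GB = \bigcup_{g \in G} gB$. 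The key point is that $T'$ is connected: for $s \in U \cup F$ the vertex $sv_0$ belongs to $B \cap sB$, so $B \cup sB$ is connected; and for $g = s_1 \cdots s_k$ with $s_i \in U \cup F$, the translates $s_1 \cdots s_j B$, $0 \leq j \leq k$, form a chain in which consecutive members $s_1 \cdots s_j B$ and $s_1 \cdots s_{j+1} B$ both contain the vertex $s_1 \cdots s_{j+1} v_0$, so $\bigcup_{j=0}^k s_1 \cdots s_j B$ is connected and joins $B$ to $gB$ inside $T'$. Hence $T'$ is a non-empty $G$-invariant subtree of $T$ on which $G$ acts cocompactly, because $G \bs T'$ is a quotient of the finite graph $B$. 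Finally, minimality of $G \grpaction{} T$ forces $T' = T$, so $G$ itself acts cocompactly on $T$.

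I expect the second implication to be the main obstacle, and within it the construction of the seed subtree $B$: it must simultaneously be invariant under a compact open subgroup $U$ and contain the finite orbit $Fv_0$, so that the union $GB$ of its $G$-translates stays connected rather than falling apart. Minimality enters only at the very end, to upgrade ``$G$ acts cocompactly on some non-empty invariant subtree'' to ``$G$ acts cocompactly on $T$''; it is genuinely needed, as the trivial group acting on an infinite locally finite tree already shows.
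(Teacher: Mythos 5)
Your proof is correct. The paper does not actually prove this lemma --- it only cites Lemma 2.4 of \cite{capracedemedts10} --- and your argument is a correct, self-contained rendition of the standard proof found there: a Milnor--\v{S}varc-type bounded-displacement argument for ``cocompact $\Rightarrow$ compactly generated'', and for the converse the construction of a finite $U$-invariant seed tree $B$ containing $v_0$ and $(U\cup F)v_0$ whose $G$-translates form a connected, $G$-invariant, cocompact subtree, with minimality invoked only to conclude that this subtree is all of $T$. Both the compactness of $K$ (a finite union of cosets of the compact open stabiliser $G_{v_0}$) and the connectedness of $GB$ via the overlapping vertices $s_1\cdots s_{j+1}v_0$ are argued correctly.
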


We now come to the major reduction result necessary to apply results from Section \ref{sec:proper-actions} in the proof of Theorem~\ref{thm:intro:non-amenability}.
\begin{proposition}
  \label{prop:reduction}
  Let $T$ be a locally finite tree with infinitely many ends.  Let $G \leqc \Aut(T)$ be a closed non-amenable subgroup acting minimally on $T$.  Then there is an open non-amenable subgroup $H \leq G$, a compact normal subgroup $K \unlhd H$ and a locally finite thick tree $S$ such that $H/K \leqc \Aut(S)$ acts minimally, cocompactly and in a type-preserving way on $S$.

If $G \grpaction{} T$ is not locally 2-transitive, then also $H/K \grpaction{} S$ can be chosen to be not locally 2-transitive. 
\end{proposition}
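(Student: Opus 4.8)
The plan is a chain of reductions, at each step replacing the acting group and the tree by a better pair while keeping control of minimality, cocompactness, the type-preserving property, and---when it is available---the failure of local $2$-transitivity. Every passage will be to an open subgroup, to an invariant subtree, or to a quotient by a compact normal subgroup, so propagating the failure of local $2$-transitivity is routine \emph{provided} the witnessing vertex keeps all of its incident edges; for that reason I postpone the type-preserving reduction to the very end. As a first step, I would note that since $G$ acts minimally on $T$ so does $\Aut(T)$ (the minimal $\Aut(T)$-invariant subtree is $G$-invariant, hence equals $T$), and that $T$ has a vertex of degree $\geq 3$ since it has infinitely many ends. Lemma~\ref{lem:reduction-thick-tree} then produces a locally finite thick tree $S^{(1)}$ with a topological isomorphism $\Aut(T)\cong\Aut(S^{(1)})$ under which $G\leqc\Aut(S^{(1)})$ still acts minimally and is locally $2$-transitive on $S^{(1)}$ exactly when it is on $T$. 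So I may assume from now on that the tree is thick, writing it $S^{(1)}$.

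Next I would pass to a compactly generated subgroup with cocompact action. Since a locally compact group is amenable whenever all its compactly generated open subgroups are (amenability passing to directed unions), the non-amenable $G$ has a compactly generated open non-amenable subgroup $H_0\leq G$. If $G$ is not locally $2$-transitive on $S^{(1)}$, fix $v\in\rV(S^{(1)})$ with $G_v$ not $2$-transitive on $\rE(v)$; by Theorem~\ref{thm:nebbia} the non-amenable closed group $G$ fixes no vertex and no end, so its action is of general type, and being also minimal, every edge of $S^{(1)}$ lies on the axis of some hyperbolic element of $G$; hence for each $e\in\rE(v)$ there is a hyperbolic $g_e\in G$ with $e$ on its axis. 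Put $H:=\langle H_0,(g_e)_{e\in\rE(v)}\rangle$ (and $H:=H_0$ when $G$ is locally $2$-transitive): a compactly generated open non-amenable subgroup of $G$. Applying Theorem~\ref{thm:nebbia} to the closed subgroup $H$, its action on $S^{(1)}$ is of general type, hence has a unique minimal invariant subtree $S^{(2)}\subseteq S^{(1)}$; it is locally finite and, containing the axis of each $g_e$, satisfies $\rE_{S^{(2)}}(v)=\rE_{S^{(1)}}(v)$ whenever $v$ was chosen. Let $K$ be the pointwise stabiliser of $S^{(2)}$ in $H$, i.e. the kernel of $H\to\Aut(S^{(2)})$; as a closed subgroup of the compact vertex stabiliser $\Aut(S^{(1)})_{w}$ (any $w\in\rV(S^{(2)})$) it is compact, and it is normal in $H$. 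The quotient $H/K$ acts faithfully on $S^{(2)}$, properly (its vertex stabilisers $H_w/K$ are compact), hence $H/K\leqc\Aut(S^{(2)})$; moreover $H/K$ is non-amenable (quotient of a non-amenable group by a compact normal one), compactly generated, and acts minimally on $S^{(2)}$, so by Lemma~\ref{lem:compactly-generated-cocompact} it acts cocompactly. When $v$ was chosen, $(H/K)_v=H_v/K$ has the same orbits on $\rE_{S^{(2)}}(v)=\rE_{S^{(1)}}(v)$ as $H_v\leq G_v$, which is not $2$-transitive, so $H/K$ is not locally $2$-transitive on $S^{(2)}$.

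The last reductions re-thicken the tree and then make the action type-preserving. Since $H/K$ acts minimally and of general type on the locally finite tree $S^{(2)}$, so does $\Aut(S^{(2)})$ (its minimal invariant subtree contains that of $H/K$, namely $S^{(2)}$) and $S^{(2)}$ has a vertex of degree $\geq 3$; Lemma~\ref{lem:reduction-thick-tree} yields a locally finite thick tree $S^{(3)}$ with $\Aut(S^{(2)})\cong\Aut(S^{(3)})$ under which $H/K$ still acts minimally, with local $2$-transitivity on $S^{(3)}$ equivalent to that on $S^{(2)}$; the action remains cocompact because $(H/K)\bs\rV(S^{(3)})$ is a subset of the finite set $(H/K)\bs\rV(S^{(2)})$, giving finitely many vertex orbits and, by local finiteness of $S^{(3)}$, finitely many edge orbits. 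Finally I replace $H/K$ by its type-preserving subgroup $(H/K)^{+}$, open of index at most two and normal in $H/K$ (the kernel of $H/K\to\rS_2$): it is non-amenable, acts type-preservingly and cocompactly (index at most two) on $S^{(3)}$, and---being non-amenable, hence of general type by Theorem~\ref{thm:nebbia}---has a minimal invariant subtree, which is normalised by, hence invariant under, $H/K$, so equals $S^{(3)}$; thus $(H/K)^{+}$ acts minimally, and it is not locally $2$-transitive whenever $H/K$ is not. Writing $(H/K)^{+}=H^{+}/K$ where $H^{+}\leq H$ is the preimage of $(H/K)^{+}$---an open subgroup of $G$, of index at most two in $H$, containing $K$ as a compact normal subgroup---and setting $S:=S^{(3)}$, this completes the proof.

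The step I expect to be the main obstacle is reaching a cocompact subgroup without destroying the witness of non-$2$-transitivity: restricting $H$ to its minimal invariant subtree could a priori lower the degree of the bad vertex $v$. This is why $H$ must be enlarged by hyperbolic elements whose axes run through \emph{every} edge at $v$, which in turn relies on the standard fact that a minimal action of general type on a tree realises every edge on some hyperbolic axis; the remaining work is bookkeeping with Lemmas~\ref{lem:reduction-thick-tree} and \ref{lem:compactly-generated-cocompact} and Theorem~\ref{thm:nebbia}.
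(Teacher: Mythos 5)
Your argument is correct and follows essentially the same route as the paper's proof: enlarge a compactly generated open non-amenable subgroup by finitely many hyperbolic elements so that the witnessing vertex retains all its incident edges in the minimal invariant subtree, quotient by the compact kernel of the action on that subtree, thicken via Lemma~\ref{lem:reduction-thick-tree}, obtain cocompactness from Lemma~\ref{lem:compactly-generated-cocompact}, and pass to the type-preserving part at the end. The only difference is cosmetic: your initial application of Lemma~\ref{lem:reduction-thick-tree} to $T$ itself is redundant, since the paper thickens only once, after passing to the minimal subtree of $H$.
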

\begin{proof}
  Since $T$ is a locally finite tree with infinitely many ends and $G \leqc \Aut(T)$ is a non-amenable subgroup acting minimally on $T$, it contains a hyperbolic element and $T$ is the convex closure of all translation axes of hyperbolic elements in $G$.  Let $H \leq G$ be an open non-amenable compactly generated subgroup.  In case $G \grpaction{} T$ is not locally 2-transitive, there is $v \in \rV(T)$ with $\deg(v) \geq 3$ and $G_v \grpaction{} \rE(v)$ is not 2-transitive.  Since $G \grpaction{} T$ is minimal, the convex closure of translation axes of hyperbolic elements in $G$ equals $T$.  Adding finitely many elements to a compact generating set of $H$, we may hence assume that $\rE(v)$ lies in the convex closure $T'$ of all translation axes of hyperbolic elements in $H$.  Note that $H \grpaction{} T'$ is minimal by construction.  The fix group $K = \Fix_G(T') \cap H$ is compact and normal in $H$.  It is the kernel of the map $H \ra \Aut(T')$.  We obtain the closed subgroup $H/K \leqc \Aut(T')$.  Since $G_v \grpaction{} \rE(v)$ is not 2-transitive, also $H_v \grpaction{} \rE(v)$ is not 2-transitive.  Further, this action factors through $(H/K)_v$, since $\rE(v) \subset \rE(T')$.  We thus find that $H/K \grpaction{} T'$ is not locally 2-transitive in case $G \grpaction{} T$ is not locally 2-transitive.

We apply Lemma \ref{lem:reduction-thick-tree} to $T'$ to obtain a thick tree $S$ such that
\begin{itemize}
\item $\rV(S) = \{v \in \rV(T') \amid \deg(v) \geq 3\}$,
\item $\rV(S)$ is $\Aut(T')$-invariant,
\item the restriction map $\Res: \Aut(T') \ra \Sym(\rV(S))$ induces an isomorphism of topological groups $\Aut(T') \cong \Aut(S)$,
\end{itemize}
Further, Lemma \ref{lem:reduction-thick-tree} says that since $H/K \grpaction{} T'$ acts minimally, $H/K \grpaction{} S$ has the same property.  Also if $H/K \grpaction{} T'$ is not locally 2-transitive, then $H/K \grpaction{} S$ has the same property.  Lemma \ref{lem:compactly-generated-cocompact} applies to show that $H/K$ acts cocompactly on $S$.  If $H/K \grpaction{} S$ is not type-preserving, we may replace $H$ by an index two subgroup of itself in order to guarantee also this property.  Note in particular, that $H/K \grpaction{} S$ remains minimal, since squares of hyperbolic elements are type-preserving.  This finishes the proof of the proposition.
\end{proof}

We are now ready to combine our results from Section \ref{sec:proper-actions} with Proposition \ref{prop:reduction} in order to prove our main theorem of this article.
\begin{proof}[Proof of Theorem \ref{thm:intro:non-amenability}]
  Let $T$ be a locally finite tree and $G \leqc \Aut(T)$ a closed non-amenable subgroup acting minimally on $T$.  Assume that $G$ does not act locally 2-transitively on $T$.  Since $G$ is not amenable, $\Aut(T)$ is not virtually abelian.  So Lemma \ref{lem:reduce-infinitely-many-ends} implies that $T$ has infinitely many ends.  Applying Proposition \ref{prop:reduction}, we find an non-amenable open subgroup $H \leq G$ a compact normal subgroup $K \unlhd H$ and a thick tree $S$ such that $H/K \leqc \Aut(S)$ acts minimally cocompactly type-preservingly and not locally 2-transitively on $S$.  In particular, $H/K$ is non-amenable.  Further $H/K \grpaction{} S$ is proper, since $H/K \leqc \Aut(S)$ is closed.  So the results of Section \ref{sec:proper-actions} apply to show that $\rL(H/K)$ is non-amenable.  Since Proposition~\ref{prop:quotient-is-corner} says that $\rL(H/K)$ is a corner of $\rL(H)$, also the latter von Neumann algebra is non-amenable.  Since $H \leq G$ is open, also $\rL(G)$ follows non-amenable by Proposition \ref{prop:amenability-open-subgroup}.  This finishes the proof of the theorem.
\end{proof}

\section{Applications to type I groups and to Burger--Mozes groups}
\label{sec:burger-mozes-groups}

In this section we will prove Theorems \ref{thm:intro:non-type-I} and \ref{thm:intro:characterisation-type-I-BM}.

\subsection{Type I groups}
\label{sec:applications-type-I}

\begin{definition}
  \label{def:type-I}
  Let $M$ be a von Neumann algebra.  We say that $M$ is a type ${\rm I}$ von Neumann algebra if for every projection $p \in M$ there is some $q \leq p$ (i.e. $pq = q$) such that $qMq$ is abelian. 

  A locally compact group $G$ is called a type ${\rm I}$ group, if every unitary representation of $G$ generates a type ${\rm I}$ von Neumann algebra.
\end{definition}

The following description of type ${\rm I}$ von Neumann algebras is well-known and provides the reader unfamiliar with this von Neumann algebraic notions with some orientation.
\begin{proposition}
  \label{prop:description-type-I}
  A von Neumann algebra $M$ is of type ${\rm I}$ if and only if there is a cardinal $\kappa$ and (possibly empty) measure spaces $X_\omega$, $\omega \leq \kappa$ such that $M \cong \bigoplus_{\omega \leq \kappa} \Linfty(X_\omega) \ot \bo(H_\omega)$, where $H_\omega$ is a Hilbert space with an orthonormal basis of cardinality $\omega$.
\end{proposition}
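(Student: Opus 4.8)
The plan is to obtain this from the classical theory of the type decomposition of von Neumann algebras, which I will use freely (cf.\ \cite{takesaki03-II}, Chapter~V). Call a projection $e \in M$ \emph{abelian} if $eMe$ is abelian; the condition in Definition~\ref{def:type-I} says exactly that every nonzero projection of $M$ majorises a nonzero abelian projection (reading $q$ as nonzero whenever $p$ is). First I would run a Zorn's lemma argument over families of pairwise orthogonal abelian projections whose central supports are pairwise orthogonal: a maximal such family has central supports summing to $1$, and adding the pieces together produces a single abelian projection $e_0 \in M$ with central support $z(e_0) = 1$.

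Next I would carry out the homogeneous decomposition. Say $M$ is \emph{homogeneous of type} $\mathrm{I}_\omega$, for a cardinal $\omega$, if $1$ is an orthogonal sum of $\omega$ abelian projections each of central support $1$. Using the comparison theorem for projections together with a transfinite exhaustion, one produces a (necessarily unique) orthogonal family of central projections $(z_\omega)_\omega$ with $\sum_\omega z_\omega = 1$ such that $M z_\omega$ is homogeneous of type $\mathrm{I}_\omega$; choosing a cardinal $\kappa$ bounding the indices that actually occur, we get $M \cong \bigoplus_{\omega \leq \kappa} M z_\omega$, so it is enough to treat a single homogeneous summand.

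So suppose $M$ is homogeneous of type $\mathrm{I}_\omega$. Fixing an abelian projection $e$ with $z(e) = 1$ and an orthogonal partition $1 = \sum_{i \in I} e_i$ with $e_i \sim e$ and $|I| = \omega$, choose partial isometries $v_i \in M$ with $v_i^* v_i = e$ and $v_i v_i^* = e_i$ (with $v_{i_0} = e$ for a distinguished $i_0$). One checks the standard facts that $eMe = \cZ(M) e$, that the $v_i v_j^*$ form a system of matrix units, and that $e_i M e_j = v_i (eMe) v_j^*$; hence the assignment $x \otimes E_{ij} \mapsto v_i (xe) v_j^*$ (with $E_{ij}$ the canonical matrix units of $\bo(\ltwo(I))$) extends to a normal $*$-isomorphism from $\cZ(M) \ot \bo(\ltwo(I))$ onto $M$. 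Since $\ltwo(I)$ has an orthonormal basis of cardinality $\omega = |I|$, this gives each homogeneous summand in the required form, with centre $\cZ(M z_\omega)$. Finally, every abelian von Neumann algebra is $*$-isomorphic to $\Linfty(X)$ for some localisable measure space $X$; applying this to each $\cZ(M z_\omega)$ (and letting $X_\omega$ be empty precisely when the index $\omega$ does not occur) yields the stated isomorphism. The converse is routine: each summand $\Linfty(X_\omega) \ot \bo(H_\omega)$ is readily seen to satisfy the defining property of type $\mathrm{I}$ by selecting, fibrewise, a rank-one subprojection of a given projection, and a direct sum of type $\mathrm{I}$ von Neumann algebras is again of type $\mathrm{I}$.

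The step I expect to be the main obstacle is the homogeneous decomposition: constructing the orthogonal family of central projections $z_\omega$ and establishing its uniqueness is precisely where the comparison theory of projections is needed, and it takes some care to do cleanly for all cardinals at once. A secondary point worth attention in the stated generality — arbitrary $\kappa$, no separability assumed — is that the last step invokes the classification of abelian von Neumann algebras as $\Linfty$ of a localisable measure space, rather than the separable spectral-theorem version.
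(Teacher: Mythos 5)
The paper states this proposition as well-known and offers no proof of its own, so there is nothing internal to compare against; your outline is the standard classical structure theory for type ${\rm I}$ von Neumann algebras (a maximal abelian projection of full central support via Zorn's lemma, the homogeneous decomposition via comparison theory, matrix units giving $\cZ(M)\ot\bo(\ltwo(I))$ on each homogeneous summand, and the representation of abelian von Neumann algebras as $\Linfty$ of a localisable measure space), and it is correct, including your attention to the non-separable case. The only mildly informal step is the ``fibrewise rank-one selection'' in the converse, which is more cleanly phrased by observing that $1 \ot q$, for $q$ a rank-one projection on $H_\omega$, is an abelian projection of central support $1$ in $\Linfty(X_\omega)\ot\bo(H_\omega)$ and then invoking comparison theory to place a nonzero abelian projection under any given nonzero projection; this is cosmetic.
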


With this characterisation at hand, we see that every type ${\rm I}$ von Neumann algebra is amenable.
\begin{corollary}
  \label{cor:type-I-implies-amenable}
  Every type ${\rm I}$ von Neumann algebra is amenable.
\end{corollary}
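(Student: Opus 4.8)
The plan is to read amenability straight off the structure theorem for type ${\rm I}$ von Neumann algebras provided by Proposition~\ref{prop:description-type-I}. Write $M \cong \bigoplus_{\omega \leq \kappa} \Linfty(X_\omega) \vnt \bo(H_\omega)$ and discard the summands with $X_\omega = \emptyset$. It then suffices to establish two facts: first, that every \emph{homogeneous} block $\Linfty(X) \vnt \bo(H)$ is amenable; second, that an arbitrary $\ell^\infty$-direct sum of amenable von Neumann algebras is again amenable. Granting both, $M$ is a direct sum of amenable von Neumann algebras, hence amenable, and we are done.

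For the homogeneous block I would argue in three steps. The algebra $\bo(H)$ is trivially amenable, with $\id_{\bo(H)}$ as a conditional expectation. Next, $\Linfty(X)$ is amenable: represented on $\Ltwo(X)$ it is a maximal abelian subalgebra, so averaging the conjugation action of its abelian — hence amenable — unitary group $\cU(\Linfty(X))$ against an invariant mean yields a conditional expectation $\bo(\Ltwo(X)) \ra \Linfty(X)$; this is the classical fact that commutative von Neumann algebras are injective, and since injectivity does not depend on the chosen faithful normal representation it gives amenability of the abstract algebra $\Linfty(X)$. Finally, the proof of Proposition~\ref{prop:amenable-amplification} shows verbatim that $\bo(H) \vnt N$ is amenable whenever $N$ is: picking a conditional expectation $\rE : \bo(K) \ra N$ onto a faithful normal representation of $N$, the map $\id \ot \rE : \bo(H) \vnt \bo(K) \ra \bo(H) \vnt N$ is again a conditional expectation. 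Applying this with $N = \Linfty(X)$ shows $\Linfty(X) \vnt \bo(H) \cong \bo(H) \vnt \Linfty(X)$ is amenable.

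For the direct sum, let $(M_\omega)_\omega$ be amenable von Neumann algebras with conditional expectations $\rE_\omega : \bo(H_\omega) \ra M_\omega$, realise $\bigoplus_\omega M_\omega \subseteq \bigoplus_\omega \bo(H_\omega) \subseteq \bo\bigl(\bigoplus_\omega H_\omega\bigr)$, and let $p_\omega$ be the central projection onto $H_\omega$. Then $x \mapsto (p_\omega x p_\omega)_\omega$ is a conditional expectation $\bo\bigl(\bigoplus_\omega H_\omega\bigr) \ra \bigoplus_\omega \bo(H_\omega)$, and $(x_\omega)_\omega \mapsto (\rE_\omega(x_\omega))_\omega$ is a conditional expectation $\bigoplus_\omega \bo(H_\omega) \ra \bigoplus_\omega M_\omega$; their composite is a conditional expectation onto $\bigoplus_\omega M_\omega$, so $\bigoplus_\omega M_\omega$ is amenable (by definition, or by two applications of Proposition~\ref{prop:amenability-conditional-expectation}). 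Taking $M_\omega = \Linfty(X_\omega) \vnt \bo(H_\omega)$ completes the proof.

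I expect the only genuinely non-formal input — hence the point to get right — to be the injectivity of commutative von Neumann algebras; everything else is a routine assembly of conditional expectations of exactly the kind already used in Propositions~\ref{prop:amenability-conditional-expectation} and~\ref{prop:amenable-amplification}. If one prefers to bypass even that, one can instead simply invoke that each homogeneous type ${\rm I}$ algebra $\Linfty(X) \vnt \bo(H)$ is well known to be injective and that injectivity passes to $\ell^\infty$-direct sums.
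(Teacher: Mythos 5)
Your proof is correct and follows exactly the route the paper intends: the paper derives the corollary directly from the structure theorem of Proposition~\ref{prop:description-type-I}, and your argument simply fills in the standard details (injectivity of abelian von Neumann algebras, stability of injectivity under tensoring with $\bo(H)$ and under $\ell^\infty$-direct sums) that the paper leaves implicit. Nothing to object to.
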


We can now proceed to the proof of our main theorem's first application.
\begin{proof}[Proof of Theorem \ref{thm:intro:non-type-I}]
  This follows immediately from Theorem \ref{thm:intro:non-amenability} and Corollary \ref{cor:type-I-implies-amenable}.
\end{proof}

\subsection{Applications to Burger-Mozes groups}
\label{sec:applications-BM-groups}

The following property is the foundation of combinatorial considerations about type ${\rm I}$ groups acting on trees.
\begin{definition}
  \label{def:independence-property}
  Let $T$ be a locally finite tree.  If $e \in \rE(T)$ is an edge in $T$, then the graph $T$ without $e$ is a disjoint union of two trees, which we call the half trees emerging from $e$.

  A closed subgroup $G \leqc \Aut(T)$ has Tits' independence property if for all edges $e \in \rE(T)$ with half trees $\mathfrak h_1$, $\mathfrak h_2$ emerging from $e$ there is a decomposition $\Fix_G(e) = \Fix_G(\mathfrak h_1) \times \Fix_G(\mathfrak h_2)$. 
\end{definition}

An important class of examples enjoying Tits' independence property are Burger--Mozes groups.
\begin{definition}[Burger--Mozes \cite{burgermozes00-local-global}]
  \label{def:burger-mozes-groups}
  Let $n \geq 3$ and $T$ be the $n$-regular tree.  A legal colouring of $T$ is a map $l:\rE(T) \ra \{1, \dotsc, n\}$ such that $l(e) = l(\ol{e})$ for all $e \in \rE(T)$ and $l|_{\rE(v)}$ is a bijection for every $v \in \rV(T)$.  Given a legal colouring $l$ of $T$, we define the local action of $g \in \Aut(T)$ at $v \in \Aut(T)$ by
  \begin{equation*}
    \sigma(g, v) := l \circ g \circ l|_{\rE(v)}^{-1} \in \Sym(\{1, \dotsc, n\}) = \rS_n
    \eqstop
  \end{equation*}
  If $F \leq \rS_n$ is given, we define the Burger-Mozes groups by 
  \begin{align*}
    \rU(F)\phantom{^+} & := \{g \in \Aut(T) \amid \forall v \in \rV(T): \, \sigma(g,v) \in F\}
  \end{align*}
  and their type-preserving subgroups
  \begin{align*}
    \rU(F)^+ & := \rU(F) \cap \Aut(T)^+
    \eqstop
  \end{align*}
\end{definition}
Note that the definition of $\rU(F)$ and $\rU(F)^+$ a priori depends on the choice of a legal colouring.  However, the fact that a legal colouring is unique up precomposition with a tree automorphism shows that $\rU(F)$ and $\rU(F)^+$ are independent of this choice up to conjugation by a tree automorphism.  Since $\Aut(T)^+ \leq \Aut(T)$ has index 2, also $\rU(F)^+ \leq \rU(F)$ has index 2.  In this context, note that our definition of $\rU(F)^+$ as type-preserving part of $\rU(F)$ in general differs from the subgroup $\bigvee_{e \in \rE(T)} U(F)_e$ from BM, which could be trivial.  However, these two definitions agree in case $F$ is transitive and generated by point-stabilisers.

Thanks to Tits' independence property, $\rU(F)^+$ is abstractly simple, if $F$ is transitive and generated by point-stabilisers.  Burger--Mozes groups are an important class of examples in the theory of totally disconnected groups.  

Actually Burger-Mozes groups account for a large class of groups having Tits' independence property, as it is demonstrated by the following theorem.  Its statement did not yet appear in the literature, and we add it for the reader's convenience. The proof combines known results from Burger--Mozes \cite{burgermozes00-local-global} and Bank-Elder-Willis \cite{bankelderwillis14}.
\begin{theorem}
  \label{thm:characterisation-independence-property}
  Let $T$ be a locally finite tree and $G \leqc \Aut(T)$ a closed vertex and edge transitive group with Tits' independence property.  Let $F \leq \rS_n$ be permutation isomorphic with the image of $G_v$ in $\Sym(\rE(v))$.  Then $G = \rU(F)$ for a suitable colouring of $T$.
\end{theorem}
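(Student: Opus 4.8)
The plan is to manufacture a legal colouring of $T$ for which $G$ embeds into the associated Burger--Mozes group, and then to promote that inclusion to an equality using Tits' independence property of $G$ together with the fact that $G$ is closed. Two easy reductions come first. Since $G$ is vertex transitive, all vertices of $T$ have the same valency, so $T$ is the $n$-regular tree (one may assume $n \geq 3$, the lower valencies being degenerate). Since moreover $G$ is transitive on the (oriented) edges $\rE(T)$, any two edges with the same origin $v$ are conjugate by an element of $G$ that necessarily fixes $v$; hence $G_v$ acts transitively on $\rE(v)$, and therefore $F \leq \rS_n$ is a \emph{transitive} permutation group.

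To build the colouring, fix a base vertex $o$ and choose a bijection $l \colon \rE(o) \to \{1, \dotsc, n\}$ realising a permutation isomorphism between the image of $G_o$ in $\Sym(\rE(o))$ and $(F, \{1, \dotsc, n\})$. Extend $l$ to $\rE(T)$ by induction on the distance to $o$: when $v$ has been coloured and $w$ is a neighbour of $v$ farther from $o$, pick $g \in G$ with $gv = w$ and set $l|_{\rE(w)} := \tau \circ l|_{\rE(v)} \circ g^{-1}$, where $\tau \in F$ is chosen so that the colour assigned to the edge $(w,v)$ agrees with the colour already assigned to $(v,w)$; such $\tau$ exists because $F$ is transitive. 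A direct check shows that $l$ is legal, that the image of $G_v$ in $\Sym(\rE(v))$ equals $F$ for every $v$, and that along each edge $(v,w)$ there is $g \in G$ with $gv = w$ and local action $\sigma(g,v) \in F$. The cocycle identity $\sigma(g_1 g_2, v) = \sigma(g_1, g_2 v)\,\sigma(g_2, v)$, run along a geodesic from $v$ to $gv$ and combined with the two facts just listed (and $g^{-1}(gv)=v$), then forces $\sigma(g, v) \in F$ for \emph{all} $g \in G$ and all $v \in \rV(T)$; equivalently, $G \leq \rU(F)$ with respect to $l$.

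For the reverse inclusion I would show every $\bar g \in \rU(F)_v$ lies in $G$, by approximation. Note $\rU(F)$ also has Tits' independence property and, by the construction above, the image of $\rU(F)_v$ in $\Sym(\rE(v))$ is likewise $F$ for every $v$. I would then prove, by induction on the radius $R$ and simultaneously with its half-tree variant (for a half-tree $\mathfrak{h}$ with root $r$ and an element of $\rU(F)$ fixing the complementary half-tree pointwise), the statement: for every $\bar g \in \rU(F)_v$ there is $g' \in G_v$ agreeing with $\bar g$ on the ball $B(v, R)$. The step: the case $R = 1$ (coinciding local actions) matches $\bar g$ on $B(v,1)$ by some $h \in G_v$; the correction $h^{-1}\bar g$ fixes $\rE(v)$ pointwise, so the independence property of $\rU(F)$ decomposes it into pieces supported on the half-trees hanging off $v$; each piece is matched on a radius-$(R-1)$ ball by an element of $G$ via the half-tree inductive hypothesis, and — crucially keeping track that these elements fix the complementary half-trees — the independence property of $G$ reassembles them into a single element of $G$; composing with $h$ closes the induction. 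Since $G$ is closed in $\Aut(T)$, letting $R \to \infty$ yields $\rU(F)_v \subseteq G$; and then $\rU(F) \subseteq G$ follows from vertex transitivity of $G$, since any $\bar g \in \rU(F)$ agrees with a suitable $\gamma \in G$ at $o$, so $\gamma^{-1}\bar g \in \rU(F)_o \subseteq G$. Combined with the previous paragraph, $G = \rU(F)$.

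The colouring construction is routine but fiddly, and is essentially the universality of $\rU(F)$ à la Burger--Mozes; the genuine heart of the proof is the radius induction, a rigidity statement in the spirit of Bank--Elder--Willis. I expect the main difficulty to be organising that induction so that the use of the independence property of $\rU(F)$ (to break $\bar g$ into half-tree pieces) and of $G$ (to glue the approximants back together) dovetail correctly — in particular verifying that each approximant produced in $G$ fixes the complementary half-tree, so that the product of the pieces is again an element of $G$ and so that passing to the half-tree case introduces no new base point issues.
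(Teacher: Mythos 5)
Your proposal is correct and follows essentially the same two-step route as the paper: the paper obtains the legal colouring and the inclusion $G \leq \rU(F)$ by citing Proposition 3.2.2 of Burger--Mozes, and gets the reverse inclusion by citing Theorem 5.4 of Bank--Elder--Willis, which states that a closed group with Tits' independence property equals its local ($1$-ball) closure. Your explicit colouring construction and your radius induction (decomposing via the independence property of $\rU(F)$, matching on half-trees, and reassembling in $G$ before invoking closedness) are precisely proofs of those two cited ingredients.
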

\begin{proof}
  Since $G$ is edge transitive, it is locally transitive.  So Proposition 3.2.2 of \cite{burgermozes00-local-global} applies to show that there is a suitable legal colouring of $T$ for which the inclusion $G \leq \rU(F)$ holds.  Theorem~5.4 of \cite{bankelderwillis14} says that 
  \begin{equation*}
    G
    =
    \{ g \in \Aut(T) \amid \forall v \in \rV(T) \, \exists h \in G: \, g|_{\rB_1(v)} = h|_{\rB_1(v)} \}
    =
    \rU(F)
    \eqstop
  \end{equation*}
This finishes the proof.
\end{proof}

The following result says that the type ${\rm I}$ conjecture holds for vertex transitive groups with Tits' independence property.  Note that non-compact boundary transitive groups are edge transitive.  So the previous theorem shows that Theorem \ref{thm:type-I} applies exactly to Burger-Mozes groups.
\begin{theorem}[Olshanskii \cite{olshanskii80},  Amann \cite{amann-thesis}, Ciobotaru {\cite[Theorem 3.5]{ciobotaru15}}]
  \label{thm:type-I}
  Let $T$ be a locally finite tree and $G \leqc \Aut(T)$ a closed subgroup acting transitively on vertices of $T$.  Assume that $G$ has Tits' independence property.  If $G$ acts transitively on the boundary $\partial T$, then $G$ is a type ${\rm I}$ group.
\end{theorem}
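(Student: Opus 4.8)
Since the statement is attributed to Olshanskii, Amann and Ciobotaru, in the paper itself the argument is simply a reference to \cite{olshanskii80}, \cite{amann-thesis} and \cite[Theorem~3.5]{ciobotaru15}; here is a sketch of what goes into such a reference. As $T$ is locally finite, $G$ is second countable and $\Cstarmax(G)$ is separable, so by Glimm's theorem \cite{glimm61} it suffices to prove that $\pi(\Cstarmax(G)) \supseteq \ko(H_\pi)$ for every irreducible unitary representation $\pi$ of $G$. Fix a vertex $v_0 \in \rV(T)$ and set $K = G_{v_0}$, a compact open subgroup since $G \leqc \Aut(T)$ is closed and $T$ is locally finite. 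I would reduce the required statement to the assertion that \emph{every irreducible unitary representation $\pi$ of $G$ is admissible}, i.e.\ that $\pi|_K$ has finite multiplicities. Granting this, pick a $K$-type $\sigma$ occurring in $\pi$; the associated central idempotent $e_\sigma \in \contc(K) \subset \contc(G) \subset \Cstarmax(G)$ maps to the projection $\pi(e_\sigma)$ onto a non-zero, finite-dimensional isotypic subspace, so $\pi(\Cstarmax(G))$ contains a non-zero compact operator, and since $\pi$ is irreducible this forces $\pi(\Cstarmax(G)) \supseteq \ko(H_\pi)$, as wanted.

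Establishing admissibility of all irreducibles is where the hypotheses enter, via a Mackey-machine classification. Tits' independence property contributes the product decomposition $\Fix_G(e) = \Fix_G(\mathfrak h_1) \times \Fix_G(\mathfrak h_2)$ of half-tree fixators for every edge $e$, which provides many subgroups with product structure and lets one analyse representations ``supported far out'' in $T$ by induction. Vertex transitivity together with boundary transitivity --- equivalently, $K$ transitive on each sphere around $v_0$, cf.\ the remark in the introduction --- yields a Cartan-type decomposition $G = \bigsqcup_{n \geq 0} K g_n K$ with $\rmd(v_0, g_n v_0) = n$, so that $(G,K)$ behaves like a Gelfand pair. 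The resulting list of irreducibles is the one familiar from $G = \Aut(T)$: the spherical principal series, occurring inside $\Ind_{G_\omega}^G \chi$ for unitary characters $\chi$ of an end stabiliser $G_\omega$ ($\omega \in \partial T$); the special, Steinberg-like representations at the ends of that parameter; and the cuspidal representations, each induced from a finite-dimensional representation of a compact open subgroup of the shape $\Fix_G(B)$ with $B \subset T$ a finite subtree. All of these are visibly admissible: cuspidal ones because they are induced from a finite-dimensional representation of a compact open subgroup of infinite index, and the principal and special series by a direct count against the Cartan decomposition.

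The main obstacle is the cuspidal family: one must show that the cuspidal representations, together with the principal and special series, actually exhaust $\widehat{G}$, and that the candidate cuspidal representations are genuinely irreducible. For $G = \Aut(T)$ this rests on explicit combinatorics of the tree \cite{olshanskii80}; the point of \cite{amann-thesis} and \cite[Theorem~3.5]{ciobotaru15} is precisely that Tits' independence property supplies the replacement for this step in the stated generality. Once every irreducible representation is known to be admissible, type ${\rm I}$-ness of $G$ follows formally by the reduction in the first paragraph.
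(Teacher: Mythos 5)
The paper offers no proof of this statement: it is imported verbatim from the cited references (Olshanskii, Amann, Ciobotaru), so there is no internal argument to compare yours against. Your sketch --- Glimm's criterion for the separable \Cstar-algebra $\Cstarmax(G)$, reduction to admissibility of irreducibles over the compact open stabiliser $K = G_{v_0}$, and the Olshanskii-style trichotomy of spherical, special and cuspidal representations with Tits' independence property carrying the cuspidal analysis --- is a fair account of what those references do and is consistent with how the paper uses the result.
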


In order to formulate a converse to this theorem, which is the content of our Theorem \ref{thm:intro:characterisation-type-I-BM}, we need to characterise boundary transitivity of groups with Tits' independence property.  The next lemma is essentially contained in the ideas of Burger--Mozes' \cite[Lemma 3.1.1]{burgermozes00-local-global}.  It also appeared as Proposition 15 in \cite{amann-thesis}.  We claim no originality, but give a full proof for the convenience of the reader.
\begin{lemma}[Compare with Burger--Mozes \cite{burgermozes00-local-global}.  See also Proposition 15 in \cite{amann-thesis}]
  \label{lem:independence-locally-two-transitive}
  Let $T$ be a locally finite tree that is not a line nor a vertex and let $G \leqc \Aut(T)$ be a closed vertex transitive group with Tits' independence property.  Then $G$ is boundary transitive if and only if $G$ is locally 2-transitive.
\end{lemma}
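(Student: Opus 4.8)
The plan is to prove the two implications separately. Note first that, since $G$ is vertex transitive and $T$ is neither a line nor a vertex (nor a single edge), all vertices of $T$ have the same degree $d\geq 3$; thus $T$ is the $d$-regular tree, it is thick with uncountably many ends, and the vertex stabilisers $G_v$ are compact open. For the second implication I will use the elementary observation that, since $d\geq 3$, local $2$-transitivity of $G$ is equivalent to $G_w$ acting transitively on the sphere $S_2(w)$ for every vertex $w$: indeed, if $e',e''\in\rE(v)\setminus\{e\}$ and $w=\rmt(e)$, then $\rmt(e')$ and $\rmt(e'')$ lie at distance $2$ from $w$, and any $h\in G_w$ carrying the geodesic $(w,v,\rmt(e'))$ to $(w,v,\rmt(e''))$ is forced to fix $v$ and $w$ and hence sends $e'$ to $e''$ while fixing $e$ (the reverse implication, which we do not need here, is obtained by applying Tits' independence at the relevant edge).

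\textbf{Local $2$-transitivity $\Rightarrow$ boundary transitivity.} Fix a vertex $v$ and ends $\xi,\eta$; we construct $g\in G_v$ with $g\xi=\eta$. Write $v=w_0,w_1,w_2,\dots$ for the geodesic ray $[v,\eta)$. I would build inductively elements $g_1,g_2,\dots\in G_v$ and ends $\xi=\xi_0,\xi_1,\dots$ with $\xi_m=g_m\cdots g_1\xi$ such that $[v,\xi_m)$ and $[v,\eta)$ share their first $m$ edges. For the step $m\to m+1$, assume $[v,\xi_m)$ leaves $[v,\eta)$ exactly at $w_m$; let $e_-$ be the edge from $w_m$ towards $w_{m-1}$ (vacuous when $m=0$), let $e_\eta=(w_m,w_{m+1})$, and let $e'$ be the edge at $w_m$ in the direction of $\xi_m$. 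By $2$-transitivity of $G_{w_m}$ on $\rE(w_m)$ pick $h\in G_{w_m}$ with $he_-=e_-$ and $he'=e_\eta$ (for $m=0$ only local transitivity at $v$ is used); then apply Tits' independence at $e_-$ and write $h=h_1h_2$ with $h_2$ fixing pointwise the half-tree emerging from $e_-$ on the $w_{m-1}$-side (which contains $[v,w_{m-1}]$) and acting on the other half-tree (which contains $e'$) as $h$ does. Setting $g_{m+1}:=h_2$ gives an element of $G_v$ fixing $[v,w_m]$ pointwise and sending $e'$ to $e_\eta$, so $\xi_{m+1}$ agrees with $\eta$ up to level $m+1$. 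Since the partial products $h_m=g_m\cdots g_1$ lie in the compact group $G_v$, some subsequence converges to $h\in G_v$; as $h_m$ agrees with $h$ on any fixed finite prefix of $[v,\xi)$ for large $m$ and carries that prefix onto the corresponding prefix of $[v,\eta)$, it follows that $h[v,\xi)=[v,\eta)$, i.e.\ $h\xi=\eta$. Thus $G_v$ is transitive on $\partial T$, a fortiori $G$ is boundary transitive.

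\textbf{Boundary transitivity $\Rightarrow$ local $2$-transitivity.} By the observation above it suffices to show that $G_w$ is transitive on $S_2(w)$ for every $w$. The natural route, following Burger--Mozes, is: given two geodesic segments of length two issuing from $w$, extend them to ends, use boundary transitivity to send one end onto the other, and then correct the resulting automorphism into one fixing $w$ and realising the desired move of spheres, using vertex transitivity together with repeated applications of Tits' independence to glue a local adjustment into a global element. I expect this correction step to be the main obstacle: the real content is to promote ``transitive on $\partial T$'' to ``transitive on every sphere $S_n$'', which is exactly the heart of Burger--Mozes' property-$(P)$ machinery and must be run as an induction that keeps careful track of which subtree is held pointwise fixed.

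There is, however, a short self-contained alternative for this implication, built from results already available. A vertex-transitive, boundary-transitive closed subgroup $G\leqc\Aut(T)$ fixes no vertex, edge, end or pair of ends, hence is non-amenable by Theorem \ref{thm:nebbia}, and it acts minimally (a finite invariant subtree is excluded by vertex transitivity on an infinite regular tree, and an infinite proper invariant subtree would have a proper non-empty invariant end set, contradicting boundary transitivity). If such a $G$ were not locally $2$-transitive, Theorem \ref{thm:intro:non-amenability} would give that $\rL(G)$ is non-amenable; but Theorem \ref{thm:type-I} gives that $G$ is of type ${\rm I}$, so $\rL(G)$ is of type ${\rm I}$ and therefore amenable by Corollary \ref{cor:type-I-implies-amenable} --- a contradiction. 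Hence $G$ is locally $2$-transitive.
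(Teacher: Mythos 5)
Your proof is correct, and the two directions fare differently against the paper. For ``locally $2$-transitive $\Rightarrow$ boundary transitive'' you run essentially the same induction as the paper does: at each vertex along the geodesic you use $2$-transitivity of the local action to find an element moving the next edge into place, then use Tits' independence at the previous edge to cut that element down to one fixing the initial segment pointwise. The only difference is bookkeeping: the paper invokes Lemma~3.1.1 of Burger--Mozes to reduce boundary transitivity to transitivity of $G_v$ on each sphere $\partial\rB_n(v)$ and then stops after finitely many steps, whereas you push the induction to infinity and extract a limit in the compact group $G_v$; both are fine, and your version is self-contained on this point. For the converse the paper simply cites Burger--Mozes Lemma~3.1.1 again: for a non-compact closed subgroup, boundary transitivity is equivalent to transitivity on all spheres, and your own opening observation (transitivity of $G_w$ on $\partial\rB_2(w)$ forces $2$-transitivity of $G_v$ on $\rE(v)$, since an element fixing $w$ and moving one vertex at distance two to another must fix the midpoint $v$) then finishes it --- no independence property and no ``correction step'' is needed, so the obstacle you anticipated is not actually there. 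Your substitute argument via Theorem~\ref{thm:nebbia}, Theorem~\ref{thm:intro:non-amenability}, Theorem~\ref{thm:type-I} and Corollary~\ref{cor:type-I-implies-amenable} is nevertheless valid and non-circular (the lemma is used in this paper only for the other direction, and only in the proof of Theorem~\ref{thm:intro:characterisation-type-I-BM}), but it is a much heavier hammer: it outsources the content to the deep external type~${\rm I}$ theorem and to the paper's main non-amenability theorem, where a two-line combinatorial argument suffices. One small point in your favour: your parenthetical exclusion of the single-edge tree is a genuine edge case that the paper's phrase ``degree at least three, since $T$ is not a line nor a vertex'' glosses over.
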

\begin{proof}
  Since $G$ is vertex transitive, it is non-compact.  So Lemma 3.1.1 in \cite{burgermozes00-local-global} shows that if $G$ is transitive on the boundary, then $G$ is locally 2-transitive.  

In order to prove the converse we appeal to Lemma 3.1.1 \cite{burgermozes00-local-global} again and have to show that for every $v \in \rV(T)$ and every $n \in \NN$ the action of $G_v$ on $\partial \rB_n(v)$ is transitive.  Since $G \grpaction{} T$ is vertex transitive, $T$ is a homogeneous tree and its degree is at least three, since $T$ is not a line nor a vertex.  Let $x,y  \in \partial \rB_n(v)$ and let $r:[0,n] \ra T$, $s: [0,n] \ra T$ be the unique geodesics satisfying $\rmo(r) = \rmo(s) = v$, $\rmt(r) = x$ and $\rmt(s) = y$.  We inductively show the existence of $g_1, \dotsc, g_n \in G_v$ such that $(g_i r)(i) = s(i)$ for all $i \in \{1, \dotsc, n\}$.  Since $G$ is locally 2-transitive and $T$ is homogeneous of degree at least three, $G$ also acts locally transitively.  So there is some $g_1 \in G_v$ such that $g_1 r(1) = s(1)$.  Assume that $g_1, \dotsc, g_i$ have been constructed for $i < n$.  Let $\mathfrak h_1, \mathfrak h_2$ be the two half-trees emerging from the edge $e := (s(i - 1), s(i))$.  The notation can be fixed by assuming $s(i- 1) \in \mathfrak h_1$ and $s(i) \in \mathfrak h_2$.  Then $\mathfrak h_2$ contains all vertices adjacent to $s(i)$ that have distance $i + 1$ to $v$.  In particular, $s(i + 1), g_i r(i + 1) \in \mathfrak h_2$.  Since $G$ is locally 2-transitive and $|\rE(s(i))| \geq 3$, there is $h \in G_e$ satisfying $h(g_ir(i + 1)) = s(i+1)$.  Because $G$ has the independence property, we obtain the product decomposition $G_e = \Fix_G(\mathfrak h_1) \times \Fix_G(\mathfrak h_2)$ and can write $h = (h_1, h_2)$ with $h_1 \in \Fix_G(\mathfrak h_1)$ and $h_2 \in \Fix_G(\mathfrak h_2)$.  Then $h_1 g_i r(i + 1) = h_2^{-1} h g_i r(i + 1) = h_2^{-1} s(i+1) = s(i+1)$.  Further, $h_1 v = v$, since $v \in \rV(\mathfrak h_1)$.  We put $g_{i+1} := h_1 g_i$ and finish the induction.  Now the existence of $g_n$ with $g_n v = v$ and $g_n x = g_n r(n) = s(n) = y$ finishes the proof of the lemma.
\end{proof}

Let us reformulate Lemma \ref{lem:independence-locally-two-transitive} in terms of Burger-Mozes groups.
\begin{lemma}[Burger--Mozes {\cite[Section 3]{burgermozes00-local-global}}]
  \label{lem:bm-local-global}
  Let $F \leq \rS_n$ for $n \geq 3$ be given.  Then the following statements are equivalent.
  \begin{itemize}
  \item $\rU(F)$ is boundary transitive,
  \item $\rU(F)$ is locally 2-transitive,
  \item $F$ is 2-transitive.
  \end{itemize}
\end{lemma}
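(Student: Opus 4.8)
The plan is to obtain this lemma as an essentially immediate consequence of Lemma~\ref{lem:independence-locally-two-transitive} together with the basic structural facts about Burger--Mozes groups recalled above. Fix $n \geq 3$, let $T$ be the $n$-regular tree, and let $l$ be a legal colouring used to define $\rU(F)$. The group $\rU(F) \leqc \Aut(T)$ is closed, vertex transitive, and enjoys Tits' independence property, and $T$ is neither a line nor a single vertex because $n \geq 3$. Hence Lemma~\ref{lem:independence-locally-two-transitive} applies to $G = \rU(F)$ and yields that $\rU(F)$ is boundary transitive if and only if it is locally 2-transitive. This takes care of the equivalence between the first two bullet points.

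It then remains to match local 2-transitivity of $\rU(F)$ with 2-transitivity of $F$. First I would record the standard fact from Burger--Mozes' local-to-global analysis \cite{burgermozes00-local-global} that for every $v \in \rV(T)$ the bijection $l|_{\rE(v)}\colon \rE(v) \to \{1,\dots,n\}$ conjugates the permutation action $\rU(F)_v \grpaction{} \rE(v)$ onto $F \leq \rS_n$; the inclusion $\{\sigma(g,v) \amid g \in \rU(F)_v\} \subseteq F$ is immediate from the definition of $\rU(F)$, and the reverse inclusion is the usual level-by-level extension argument, constructing, for a prescribed $\sigma \in F$, an automorphism fixing $v$ with all local actions lying in $F$. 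Since $\rU(F)$ is vertex transitive, local 2-transitivity needs to be checked only at one vertex, so $\rU(F)$ is locally 2-transitive if and only if $F$ acts 2-transitively on $\{1,\dots,n\}$ in the sense of Definition~\ref{def:two-transitivity}; and because $n \geq 3$ this coincides with the usual notion of 2-transitivity. Combining this with the previous paragraph gives the three-fold equivalence.

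I do not expect a genuine obstacle here: the real content is carried by Lemma~\ref{lem:independence-locally-two-transitive} and by Burger--Mozes' description of the vertex stabilisers of $\rU(F)$. The only point requiring any care is the surjectivity of $\rU(F)_v$ onto $F$, i.e.\ that every $\sigma \in F$ is realised as a local action at $v$ by some element of $\rU(F)$ fixing $v$; this is the standard local-to-global construction for universal groups and can also simply be quoted from \cite{burgermozes00-local-global}.
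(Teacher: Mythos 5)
Your proposal is correct and matches the paper's (implicit) derivation: the paper introduces this lemma with the phrase ``Let us reformulate Lemma~\ref{lem:independence-locally-two-transitive} in terms of Burger-Mozes groups,'' i.e.\ it likewise obtains the first equivalence from Lemma~\ref{lem:independence-locally-two-transitive} applied to the closed, vertex transitive group $\rU(F)$ with Tits' independence property, and identifies local 2-transitivity of $\rU(F)$ with 2-transitivity of $F$ via the Burger--Mozes description of the local action of vertex stabilisers. Your extra care about surjectivity of $\rU(F)_v \to F$ and about the paper's nonstandard Definition~\ref{def:two-transitivity} (harmless since $n \geq 3$) is exactly the right bookkeeping.
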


Combining Theorem \ref{thm:type-I}, Lemma \ref{lem:independence-locally-two-transitive} and Theorem \ref{thm:intro:non-type-I}, we obtain the characterisation of vertex transitive type ${\rm I}$ groups with the independence property, stated as Theorem \ref{thm:intro:characterisation-type-I-BM}.
\begin{proof}[Proof of Theorem \ref{thm:intro:characterisation-type-I-BM}]
  All statements of the theorem are obvious in case $T$ is a line or $n = 2$.

  Let $T$ be a locally finite tree and $G \leqc \Aut(T)$ a closed vertex transitive subgroup with Tits' independence property.  If $G$ is locally 2-transitive, then $G$ is boundary transitive by Lemma \ref{lem:independence-locally-two-transitive}.  So Theorem \ref{thm:type-I} says that $G$ is a type ${\rm I}$ group.  If $G$ is not locally 2-transitive, then $T$ has at least one vertex of degree 3.  So $T$ is not a line and it follows from vertex transitivity, minimality of $G \grpaction{} \partial T$ and Proposition \ref{thm:nebbia} that $G$ is not amenable.  So Theorem \ref{thm:intro:non-type-I} applies to show that $G$ is not a type ${\rm I}$ group.

It remains to prove the statement about Burger-Mozes groups. Since for every $F \leq \rS_n$ the closed subgroup $\rU(F)^+ \leq \rU(F)$ has index 2, it suffices to characterise when $\rU(F)$ is a type ${\rm I}$ group.  Now $\rU(F)$ is vertex transitive and has Tits' independence property.  So the first part of the statement says that $\rU(F)$ is a type ${\rm I}$ group if and only if it acts locally 2-transitively.  Now Lemma \ref{lem:bm-local-global} finishes the proof of the theorem.  
\end{proof}


\clearpage

\bibliographystyle{abbrv}
\bibliography{mybibliography}




\vspace{2em}
{\small \parbox[t]{200pt}
  {
    Cyril Houdayer \\
    Laboratoire de Math{\'e}matiques d’Orsay \\
    Universit{\'e} Paris-Sud \\
    CNRS \\
    Universit{\'e} Paris-Saclay \\
    F-91405 Orsay \\
    {\footnotesize cyril.houdayer@math.u-psud.fr}
  }
}
\hspace{15pt}
{\small \parbox[t]{200pt}
  {
    Sven Raum \\
    EPFL SB SMA\\
    Station 8 \\
    CH–1015 Lausanne \\
    Switzerland \\
    {\footnotesize sven.raum@epfl.ch}
  }
}

\end{document}